\documentclass{article}
\usepackage{stmaryrd}

\usepackage[utf8]{inputenc}
\usepackage[T1]{fontenc}
\usepackage[british,UKenglish,USenglish,english,american]{babel}

\textwidth=125mm \textheight=195mm

\usepackage{hyperref}
\usepackage{amsmath}
\usepackage{amssymb}
\usepackage{amsthm}
\usepackage{mathrsfs}
\usepackage{amsfonts}

\usepackage{esint}
\usepackage{array}
\usepackage{latexsym}

\usepackage{lmodern}

\usepackage{graphicx}
\usepackage{graphics}
\usepackage{epsfig}
\usepackage{color}
\DeclareGraphicsExtensions{.eps,.pdf,.jpeg,.png}

\usepackage{titlesec}
\usepackage{multirow}

\usepackage[all]{xy}
\usepackage{dsfont}

\usepackage{hyperref}
\usepackage{bigints}


\def\Ch{\operatorname{Ch}}

\def\ind{\operatorname{ind}}

\def\Tr{\operatorname{Tr}}


{\theoremstyle {definition} \newtheorem {defi} {Definition} [section] }
{\theoremstyle {plain}  \newtheorem {thm} [defi] {Theorem}}
{\theoremstyle {plain}  \newtheorem {cor} [defi]{Corollary}}
{\theoremstyle {plain} \newtheorem {prop} [defi]{Proposition}}
{\theoremstyle {plain} \newtheorem {lem}[defi] {Lemma}}

\newtheorem{notation}{Notation}[defi]
{\theoremstyle {definition} }

{\theoremstyle {definition} \newtheorem{remarque}[defi]{Remark}}
{\theoremstyle {definition} }
{\theoremstyle {definition} }
{\theoremstyle {definition}  }
{\theoremstyle {definition} }

\def\Eul{{\mathrm{Eul}}}

\def\Str{{\mathrm{Str}}}
\def\Tr{{\mathrm{Tr}}}

\def\Ch{{\mathrm{Ch}}}
\def\det{{\mathrm{det}}}
\def\exp{{\mathrm{exp}}}
\def\g{{\mathfrak{g}}}

\def\K{{\mathrm{K_G}}}

\def\Khh{{\mathrm{K_{H}}}}
\def\k{{\mathrm{K}}}
\def\ind{{\mathrm{Ind^{M|B}}}}

\usepackage{authblk}
\newcommand{\email}[1]{\href{mailto:#1}{#1}}

\title{The index of $G$-transversally elliptic families II}
\author[1]{Alexandre Baldare
}
\affil[1]{Institut \'Elie Cartan de Lorraine, Université de Lorraine, 57070 Metz, France, \email{alexandre.baldare@univ-lorraine.fr}}

\begin{document}

\maketitle

\begin{abstract}
We define the Chern character of the index class of a $G$-invariant family of $G$-transversally elliptic operators, see \cite{baldare:KK}. Next we study the Berline-Vergne formula for families in the elliptic and transversally elliptic case.
  \medskip\\
  \textbf{Keywords:} Index, transversally elliptic operators, equivariant cohomology, fibration.
  \smallskip\\
  \textbf{MSC2010 classification:} 	
19K56, 
19L47, 
19L10, 
19M05, 
55N25. 
\end{abstract}

\tableofcontents

\section{Introduction}
\quad
Let $M$ be a compact manifold. Let $G$ be a compact Lie group acting on $M$. Assume for simplicity that $G$ is topologicaly cyclic and generated by an element $g\in G$. Let $i : M^g \hookrightarrow M$ denote the inclusion of the fixed point submanifold of $g$. In \cite{Atiyah-Segal:II}, Atiyah and Segal gave a Lefschetz fixed point formula for the index of a $G$-invariant elliptic operator on $M$ using localization in equivariant $\k$-theory. 
Denote by $R(G)_g$ the localization of the ring $R(G)$ at the prime ideal $I_g=\{\chi \in R(G) ,\chi(g)=0\}$. Atiyah and Segal obtained 
$$\mathrm{Ind}^{\mathrm{M}}_\mathrm{G}(\sigma)=\bigg(\mathrm{Ind}^{\mathrm{M^g}}\otimes id_{R(G)_g}\bigg)\bigg(\dfrac{i^*\sigma}{\lambda_{-1}(N^g \otimes \mathbb{C})}\bigg),$$
where $\lambda_{-1}(N^g \otimes \mathbb{C}) = \sum (-1)^i \bigwedge^i(N^g \otimes \mathbb{C})$ and $\sigma \in \K(T^*M)$ is the homotopy class of an elliptic symbol. 
In \cite{Atiyah-Singer:III}, Atiyah and Singer obtained, using the Lefschetz fixed point formula, a cohomological version of the Lefschetz fixed point formula. More precisely, Atiyah and Singer obtained the following formula
$$\mathrm{Ind}^{\mathrm{M}}_\mathrm{G}(\sigma)(g)=(-1)^{n_g} \bigint_{T^*M^g}\dfrac{\Ch(i^*\sigma (g)) }{\Ch(\lambda_{-1}(N^g \otimes \mathbb{C})(g))}\wedge \hat{A}(T^*M^g)^2,$$
where $i : T^*M^g \hookrightarrow T^*M$ is the inclusion of the submanifold of fixed points using a riemannian metric and $i^*\sigma(g)$ means the evaluation of $i^*\sigma \in \k(T^*M^g) \otimes R(G)_g$ at the element $g$, $\Ch$ means the Chern character tensored by the identity of $\mathbb{C}$ and $\hat{A}(T^*M^g)$ is the $\hat{A}$-genus of $M^g$.\\
In \cite{BV:formuleloc:Kirillov}, Berline and Vergne gave a delocalized formula for the index of a $G$-invariant elliptic operator in equivariant cohomology. To obtain this new formula, Berline and Vergne showed a localization formula in equivariant cohomology and then applied the localization formula to derive a delocalized cohomological Lefschetz theorem. More precisely, Berline and Vergne obtained the following formula 
$$\mathrm{Ind^{M}_G}(\sigma)(se^X)=\bigint_{TM^s}\dfrac{\Ch_{s}(\sigma,X)\wedge \hat{A}^2(T^VM^s,X)}{D_s(N_s,X)},$$
where $s\in G$, $X$ is an element of the Lie algebra $\g(s)$ of the centralizer $G(s)$ of $s$ and $\Ch_{s}(\sigma,X)$, $\hat{A}^2(T^VM^s,X)$ and $D_s(N_s,X)$ are equivariant characteristic cohomology classes (see Section \ref{section:equiv:form} and \cite{BGV,BV:formuleloc:Kirillov,BV:equiChernCaracter} for more details). Then in \cite{BV:equiChernCaracter}, Berline and Vergne gave an index theorem in equivariant cohomology for $G$-invariant elliptic operators. \\

For $G$-transversally elliptic operators, Atiyah and Singer defined in \cite{atiyah1974elliptic} an index class which is now an invariant slowly increasing distribution on $G$. Similar to the case of elliptic operators, Berline and Vergne showed a corresponding index theorem for $G$-transversally elliptic operators, see \cite{BV:IndEquiTransversal}. In \cite{paradan2008index}, Paradan and Vergne gave a new approach to the cohomological index theorem for $G$-transversally elliptic operators using the Chern character of \cite{paradan2008equivariant} but with generalized coefficients on the Lie algebra $\g$ of $G$.\\

In the present article, we investigate the cohomological form of the index of families of $G$-transversally elliptic operators. Let $p : M \rightarrow B$ be a $G$-equivariant compact fibration. Assume that $G$ acts trivially on $B$. In \cite{baldare:KK}, an index class for $G$-invariant families of $G$-transversally elliptic operators was defined in the Kasparov bivariant $\k$-theory group $\k\k(C^*G,C(B))$ and using results from \cite{hilsum1987morphismes} it was then shown that the Kasparov product of this index class with an elliptic operator on the base $B$ is given by the index class of a $G$-transversally elliptic operator on $M$. Using this fact and the well defined bivariant Chern character in local cyclic homology \cite{puschnigg2003diffeotopy}, we introduce the Chern character of our index class and we show, as expected, that it belongs to the space of $G$-invariant slowly increasing distribution on $G$ with values in the de Rham cohomology of $B$. \\
Once this Chern character is defined, we investigate the Berline-Vergne formula in this context. As in the case of a single operator, we start by the simpler case of $G$-invariant families of elliptic operators. In this case, we deduce a Berline-Vergne formula using the Lefschetz formula for families, see \cite{Benameur:thmFamilleLefschetz}. In \cite{Benameur:thmFamilleLefschetz}, the Lefschetz formula  for famillies is obtained using results from \cite{Benameur:LongLefschetzKtheorie}, where a longitudinal Lefschetz theorem is shown in $\k$-theory. For foliations, Lefschetz formulas can be found in \cite{Benameur_Heitsch_lefschetz_foliation, heitsch1990lefschetz}. Note that our Berligne-Vergne formula for $G$-invariant families of elliptic operators is also valid when $G$ acts non trivially on $B$.  More precisely, if $s\in G$ and $\sigma$ is a $G$-invariant elliptic symbol along the fibers then we obtain the following theorem:

\begin{thm}\label{thm:BV:famille:elliptique} Let $s\in G$ and $X\in \mathfrak{g}(s)$. Denote by $N_s$ the normal bundle of $M^s$ in $M$.
The following equality is true in the cohomology $H(B^s,d_X)$ : 
$$\Ch_{s}\big(\mathrm{Ind^{M|B}_H}(\sigma),X\big)=\int_{T^VM^s|B^s}\dfrac{\Ch_{s}(\sigma,X)\wedge \hat{A}^2(T^VM^s,X)}{D_s(N_s\cap T^VM,X)}.$$ 
where $B^s$ the fixed point submanifold in the base $B$ and $T^VM^s$ is the vertical tangent bundle to the fixed point submanifold $M^s$.
\end{thm}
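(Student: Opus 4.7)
The strategy is to deduce the delocalized equivariant formula from the Lefschetz theorem for families of \cite{Benameur:thmFamilleLefschetz}, combined with a Berline-Vergne style extension in the variable $X \in \mathfrak{g}(s)$, carried out in the Cartan complex $(\Omega(B^s), d_X)$.

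First I would apply Benameur's family Lefschetz formula to the single group element $s$. Using Chern-Weil representatives, this yields an equality in $H^*(B^s;\C)$ of the shape
$$\Ch(\indm(\sigma))(s) = \int_{T^VM^s | B^s} \frac{\Ch(i^*\sigma)(s)\wedge \hat{A}^2(T^VM^s)}{\Ch\bigl(\lambda_{-1}(N_s\cap T^VM \otimes \C)\bigr)(s)},$$
where $i : M^s \hookrightarrow M$ is the inclusion and the denominator is invertible since $s$ acts without fixed vectors on the fibers of $N_s \cap T^VM$. This is Theorem \ref{thm:BV:famille:elliptique} evaluated at $X = 0$.

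Next I would identify both sides with germs at $X=0$ of smooth $d_X$-closed equivariant forms on $B^s$. For the left-hand side, $\Ch_s(\indm(\sigma), X)$ is constructed from Puschnigg's bivariant Chern character \cite{puschnigg2003diffeotopy} applied to the index class of \cite{baldare:KK}, and by design specializes to $\Ch(\indm(\sigma))(s)$ at $X=0$. For the right-hand side, each factor of the Lefschetz integrand admits its canonical Cartan-model equivariant lift; the Berline-Vergne identity (see \cite{BV:equiChernCaracter}) matches the equivariant Chern character of $\lambda_{-1}$ of the complexified normal bundle at $se^X$ with $D_s(N_s\cap T^VM, X)$ up to an explicit equivariant $\hat{A}$-factor, so that the Chern-Weil extension of the Lefschetz integrand coincides with the integrand of the statement.

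Finally I would propagate the equality from $X=0$ to all of $\mathfrak{g}(s)$ via the Berline-Vergne localization theorem, applied fiberwise to the submersion $p : M^s \to B^s$. Both sides are smooth in $X$ and $d_X$-closed, their values at $X=0$ agree by the first step, and both are localized on $M^s$ in the $M$-direction by construction; this forces equality in $H(B^s, d_X)$ on a neighborhood of $0$, and then on all of $\mathfrak{g}(s)$ by analyticity in $X$. The main obstacle will be precisely this fiberwise localization step: one has to verify that the Mathai-Quillen-type integrand, which decays only in the vertical directions of $T^VM^s$, admits a well-defined fiber integration that commutes with $d_X$ and produces a smooth $d_X$-closed form on $B^s$ -- this is what takes one beyond the single-operator argument of \cite{BV:formuleloc:Kirillov} and makes the non-compact vertical setting genuinely novel.
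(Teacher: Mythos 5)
Your starting point (Benameur's family Lefschetz formula) and your use of the identity relating $\Ch(\lambda_{-1}(N_s\cap T^VM\otimes\C))$ evaluated at a group element to $D_s(N_s\cap T^VM,X)$ are the right ingredients, and your Step 1 does correctly give the theorem at $X=0$. But the propagation step from $X=0$ to general $X\in\mathfrak{g}(s)$ is where the argument breaks down, and it is precisely this step that carries the whole content of the theorem. Two smooth $d_X$-closed equivariant forms (or families of cohomology classes) that agree at $X=0$ need not agree for $X\neq 0$: agreement of two functions of $X$ at the single point $X=0$ propagates to a neighbourhood neither by ``$d_X$-closedness'' nor by ``analyticity'' (which is not established for either side, and would in any case require agreement on an open set, not at a point). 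The Berline--Vergne localization theorem is a statement about integrals of equivariantly closed forms over fixed-point sets; it is not a rigidity statement that upgrades an identity at $X=0$ to an identity in $X$. Your final paragraph about decay of the Mathai--Quillen integrand is also a red herring here: in the elliptic case the Chern character $\Ch_c(\sigma,X)$ is compactly supported, so fibrewise integration poses no difficulty.

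The mechanism the paper actually uses is different and is the standard delocalization trick: one applies the family Lefschetz formula not at $s$ but at the nearby element $h=se^X$, whose fixed-point set $M^h=(M^s)^X$ is in general strictly smaller than $M^s$. One then identifies $\Ch(i_h^*\sigma(h))$ with $i_{h,s}^*\Ch_s(\sigma,X)$ (Theorem \ref{thm:chern:paradan} together with Lemma \ref{lem : Ch_g=Chchi}), decomposes the denominator via Lemma \ref{lem: Chlambda2} so that the Euler class of $N_h^s\cap T^VM$ appears, recognizes the resulting integral over $(T^VM^s)^X|(B^s)^X$ as the localized form of an integral over $T^VM^s|B^s$ via the Bismut localization formula (Theorem \ref{thm : Bismut}) read backwards, and finally invokes the isomorphism $j_{h,s}^*:H(B^s,d_X)\to H(B^h,\C)$ to remove the restriction. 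Without evaluating at $h=se^X$ and without this localization-in-reverse step, there is no control whatsoever on the $X$-dependence of either side, so the proposal as written has a genuine gap.
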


For families of $G$-transversally elliptic operators, we have to assume that the action of $G$ on $B$ is trivial and we use the Paradan-Vergne approach of equivariant cohomology with generalized coefficient. In this case we obtain the following theorem: 

\begin{thm}\label{thm:BV:familles}
Let $\sigma$ be a $G$-transversally elliptic symbol along the fibers of a compact $G$-equivariant fibration $p : M\rightarrow B $ with $B$ oriented and $G$-trivial. Denote by $N_s$ the normal vector bundle to $M^s$ in $M$.\\
1. There is a unique generalized function with values in the cohomology of $B$ denoted $\mathrm{Ind}^{G,M|B}_{coh} : \K(T^V_GM) \rightarrow C^{-\infty}(G , H(B,\mathbb{C}))^{Ad(G)}$ satisfying the following local relations:
$$\mathrm{Ind}^{G,M|B}_{coh}([\sigma ] )\|_s(Y)=(2i\pi)^{-\dim (M^s|B)}\bigint_{T^VM^s|B}\hspace*{-0.5cm}\dfrac{\Ch_s(\mathbb{A}^{r^*\omega_s}(\sigma),Y)\wedge \hat{A}^2(T^VM^s,Y)}{D_s(N^s,Y)},$$
$\forall s\in G$, $\forall Y \in \g(s)$ small enough so that the equivariant classes $\hat{A}^2(T^VM^s,Y)$ and $D(N^s,Y)$ are defined. \\
2. Furthermore, we have the following index formula:
$$\mathrm{Ind}^{G,M|B}_{coh}([\sigma ] )=\Ch(\ind([\sigma ]))\in C^{-\infty}(G ,H(B,\mathbb{C}))^{Ad(G)}.$$
Here $\Ch$ is the Chern character, defined using bivariant local cyclic homology from \cite{puschnigg2003diffeotopy}.
\end{thm}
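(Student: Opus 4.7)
The plan is to split the theorem into the two announced halves and treat them sequentially.

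\textbf{Part 1: existence and uniqueness of $\mathrm{Ind}^{G,M|B}_{coh}$.} I would fix $s\in G$ and work in a small $\mathrm{Ad}(G)$-invariant neighbourhood of its conjugacy class, reducing to describing the distribution on a small ball around $0$ in $\g(s)$. The integrand on the right-hand side is built from the Paradan--Vergne relative Chern character $\mathbb{A}^{r^*\omega_s}(\sigma)$, which for a transversally elliptic symbol carries generalized coefficients on $\g(s)$. I would first verify that the one-form $r^*\omega_s$ (pulled back from a chosen transverse one-form) makes $\mathbb{A}^{r^*\omega_s}(\sigma)$ an equivariantly closed form with a support condition ensuring the fibrewise pushforward $\int_{T^VM^s|B}$ is defined as a distribution in $Y$ with values in $\Omega(B)$. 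Next I would show independence of the auxiliary data (the one-form $\omega_s$, the connections entering $\hat A^2$ and $D_s$) by the usual Stokes/homotopy arguments for equivariant characteristic forms, which produces exact remainders that die after integration. Uniqueness of the global distribution then follows since specifying the germ at every $s$ determines an element of $C^{-\infty}(G,H(B,\C))^{\mathrm{Ad}(G)}$, and existence is obtained by patching these germs with an $\mathrm{Ad}(G)$-invariant partition of unity on $G$ (the compatibility between neighbourhoods of two conjugate points being guaranteed by the functorial behaviour of the construction).

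\textbf{Part 2: identification with the Chern character of the index.} The key idea is to reduce the transversally elliptic family to an elliptic one by multiplicativity, so that Theorem~\ref{thm:BV:famille:elliptique} can be invoked. Given an auxiliary elliptic symbol $\tau$ on the base $B$ (e.g.\ a Dirac-type symbol), the sharp product $\sigma\,\sharp\,p^*\tau$ is $G$-elliptic along the fibres of $M\to\{\mathrm{pt}\}$, and the Kasparov product formula established in \cite{baldare:KK} gives
$$\mathrm{Ind}^{M|\mathrm{pt}}_G(\sigma\,\sharp\,p^*\tau)=\mathrm{Ind}^{M|B}_G(\sigma)\otimes_{C(B)}\mathrm{Ind}^B(\tau)\in \k\k(C^*G,\C).$$
Applying the bivariant Chern character of \cite{puschnigg2003diffeotopy} and using its compatibility with Kasparov products turns this into a pairing identity between $\Ch(\mathrm{Ind}^{M|B}_G(\sigma))\in C^{-\infty}(G,H(B,\C))^{\mathrm{Ad}(G)}$ and the topological Chern character of $\tau$. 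On the right, the single-operator Paradan--Vergne formula from \cite{paradan2008index} computes $\Ch(\mathrm{Ind}^{M|\mathrm{pt}}_G(\sigma\,\sharp\,p^*\tau))$ by the same local integrals over $T^*M^s$, and fibre-integrating them over the $B^s$ direction produces exactly the right-hand side of the theorem tested against $\Ch(\tau)$. Varying $\tau$ over a generating family of $\k^*(B)$ (enough Dirac symbols to separate classes in de Rham cohomology), Poincaré duality on $B$ then forces the equality of the two cohomology-valued generalized functions.

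\textbf{Main obstacle.} The technical heart of the argument is step~2, and more precisely the claim that taking the bivariant Chern character intertwines the analytic Kasparov product over $C(B)$ with the appropriate cup product in equivariant cohomology with generalized coefficients. On the algebraic side one needs the multiplicativity of Puschnigg's local bivariant Chern character on such products; on the geometric side, one must identify the resulting characteristic class with a fibrewise push-forward of the Paradan--Vergne form. Both ingredients exist in the literature for elliptic operators, but extending them to the transversally elliptic setting requires controlling the generalized-coefficient part of $\mathbb{A}^{r^*\omega_s}(\sigma)$ as a distribution in $Y$ uniformly while one performs the fibrewise integration over $T^VM^s|B^s$. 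Once this is done, Part~1 guarantees that the local formulas assemble into a genuine element of $C^{-\infty}(G,H(B,\C))^{\mathrm{Ad}(G)}$ and Part~2 follows from the reduction above.
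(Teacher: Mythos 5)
Your Part 2 is essentially the paper's argument: form the product $\sigma\odot p^*\sigma'$ with an auxiliary elliptic symbol $\sigma'$ on $B$, use the Kasparov-product identity from \cite{baldare:KK} together with the multiplicativity of Puschnigg's bivariant Chern character, compute the single-operator side by the Paradan--Vergne formula, split the integral over $T^*M^s$ into a fibrewise integral times an integral over $TB$, and conclude by Poincar\'e duality as $\sigma'$ ranges over enough of $\k^*(B)$. Two small corrections there: the product $\sigma\odot p^*\sigma'$ is $G$-\emph{transversally} elliptic on $M$, not elliptic, so the theorem you need is the single-operator Theorem \ref{thm:Ind:BV} (Paradan--Vergne), not Theorem \ref{thm:BV:famille:elliptique}; and the ``cup product'' identification you flag as the main obstacle is handled in the paper by a concrete product super-connection computation, $\mathbb{A}^{j^*r^*\omega}(\sigma\otimes p^*\sigma')=j^*(\mathbb{A}^{r^*\omega}(\sigma)\otimes 1)+1\otimes p^*(\mathbb{A}'(\sigma'))$, together with the homotopy (Corollary \ref{cor:egalite:chern}) replacing the Liouville form by its vertical part; these give $\Ch_{c}(\sigma\odot p^*\sigma',\omega,s)=j^*\Ch_{c}(\sigma,r^*\omega,s)\wedge p^*\Ch_{c}(\sigma')$, which is exactly the missing geometric ingredient.

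The genuine gap is in your Part 1. You propose to establish existence of $\mathrm{Ind}^{G,M|B}_{coh}$ by constructing the germs at each $s$ directly and then ``patching with an $\mathrm{Ad}(G)$-invariant partition of unity on $G$, the compatibility being guaranteed by functoriality.'' For invariant generalized functions this compatibility is precisely the hard content: by the Duflo--Vergne descent theorem (quoted in Section \ref{section:equi:coh:coefgen}) one must verify, for every $s$ and every $S\in U_s(0)$, the relation $\theta_s\|_S=\theta_{se^S}$ between restrictions of generalized functions on different sub-Lie-algebras, and for the present integrands this amounts to a localization statement passing from $T^VM^s$ to $T^VM^{se^S}$ with generalized coefficients --- exactly what Paradan and Vergne prove for a single operator and what cannot be waved away as functoriality. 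The paper avoids re-proving this: it pairs the candidate local expressions with the Chern characters of a generating family of $\k$-homology classes of $B$, identifies each pairing with $\mathrm{Ind}^{G,M}_{coh}([\sigma\odot p^*\sigma'])\|_s$, and then invokes the already-established existence and uniqueness of $\mathrm{Ind}^{G,M}_{coh}$ on $G$ (Theorem \ref{thm:Ind:BV}) to conclude that the $H(B,\C)$-valued germs glue. In other words, Parts 1 and 2 are proved by one and the same reduction to the base; your Part 1, as written, either needs to import the full Duflo--Vergne compatibility verification or should be replaced by this reduction.
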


\medskip

Let us describe the contents of the present paper. The second section is devoted to some preliminary results. We recall the definition of the equivariant cohomology which will be used later on, then we review the definition of the index class of a family of $G$-transversally elliptic operators from \cite{baldare:KK}, and we finish by a overview of the Atiyah-Singer index theorem for equivariant families \cite{Atiyah-Singer:IV}. In the third section, we define the Chern character of the index class of a $G$-invariant family of $G$-transversally elliptic operators, an invariant distribution with values in the de Rham cohomology of the base $B$. In the fourth and fifth sections, we prove the Berline-Vergne formulae for families of elliptic and $G$-transversally elliptic operators.

\medskip
\noindent
\textbf{Acknowledgements.}
This work is part of my PhD thesis under the supervision of M.-T. Benameur. I would like to thank my advisor for very helpful discussions, comments and corrections. 
I also thank W. Liu, P.-E. Paradan and V. Zenobi for several conversations during the preparation of this work. I am also indebted to M. Hilsum, P. Piazza, M. Puschnigg and G. Skandalis for reading the PhD version of this work and for their constructive suggestions. I would also like to thank
the referee for several very useful suggestions. Last but not least, I would like to thank P. Carrillo Rouse for his interest, insight, and useful discussions during the \textit{Workshop on Index Theory, Interactions and Applications} in Toulouse.


\section{Some preliminary results}\label{III}
Let $G$ be a compact group.
Let $p:M\rightarrow B$ be a $G$-equivariant (locally trivial) fibration of compact manifolds, with typical fiber $F$, a compact manifold. We denote by $M_b=p^{-1}(b)$ the fiber over $b \in B$, by $T^VM = \textrm{ker}~ p_*$ the vertical subbundle of $TM$, and by $T^VM^*$ its dual bundle. We choose a $G$-invariant riemannian metric on $M$ and hence will identify $T^VM^*$ with a subbundle of $T^*M$ when needed. 
\noindent
Let $\pi : E=E^+ \oplus E^- \rightarrow M$ be a $\mathbb{Z}_2$-graded vector bundle on $M$ which is assumed to be $G$-equivariant with fixed $G$-invariant hermitian structure. Denote by $\mathcal{P}^m(M,E^+,E^-)$ the space of (classical) continuous families of pseudodifferential operators on $M$ as defined in \cite{Atiyah-Singer:IV}. A family $P$ is $G$-equivariant if $g\cdot P=g\circ P \circ g^{-1}=P$, for any $ g\in G$.  As usual, $C^{\infty,0}(M,E)$ will be the space of continuous fiberwise smooth sections of $E$ over $M$, see \cite{Atiyah-Singer:IV}. 

\subsection{Equivariant cohomology}\label{section:equiv:form}
Here we recall the definition of equivariant cohomologies and equivariant forms which will be used in the sequel, see \cite{BGV,BV:equiChernCaracter,BV:formuleloc:Kirillov}. Let $L$ be a compact Lie group and $\mathfrak{l}$ its Lie algebra. Assume that $L$ acts on a manifold $W$ (we say that $W$ is a $L$-manifold). Let $X\in \mathfrak{l}$. Denote by $X^*_W$ the vector field generated by $X$ on $W$ that is $X^*_W(f)(w)=\dfrac{d}{dt}_{|t=0}f(e^{-tX}\cdot w)$, $\forall f\in C^{\infty}(W),\ w\in W$. Let $d$ be the de Rham differential and let $\iota (Y) $ denote the contraction by a vector field $Y$. 
Let $\mathcal{A}(W)$ be the space of differential form on $W$. Denote by $\mathcal{A}(W)_X$ the subspace of $\mathcal{A}(W)$ given by the forms $\alpha$ such that $\mathscr{L}(X)\alpha =0$. Let $d_X$  denote the operator $d-\iota (X_W^*)$ on $\mathcal{A}(W)$.

\begin{defi}
The $d_X$-cohomology $H(W,d_X)$ of $W$ is the cohomology of the complex $(\mathcal{A}(W)_X,d_X)$.
\end{defi}
Denote by $W^X=\{w\in W,\  X_W^*(w)=0\}$ and by $i : W^X \hookrightarrow W$ the inclusion. Since $X_W^*=0$ on $W^X$, we get that $d_X$ is the usual de Rham differential on $W^X$ and so $H(W^X,d_X)$ coincides with the usual de Rham cohomology $H(W^X,\mathbb{C})$ of $W^X$. Recall from \cite{BV:formuleloc:Kirillov} that $i^* : H(W,d_X) \rightarrow H(W^X,\mathbb{C})$ is an isomorphism. 

Let $\mathcal{A}^{\infty}_L(\mathfrak{l},W)$ denote the algebra $\big(C^{\infty}(\mathfrak{l}) \otimes \mathcal{A}(W)\big)^L$ of $L$-invariant smooth functions on $\mathfrak{l}$ with values in $\mathcal{A}(W)$.
Let $d_\mathfrak{l}$ be the operator on $\mathcal{A}^{\infty}_L(\mathfrak{l},W)$ given by 
$$(d_\mathfrak{l}\alpha)(X)=d(\alpha(X))-\iota (X^*_W)(\alpha(X)).$$
We have $(d_\mathfrak{l}^2\alpha)(X)=-\mathscr{L}(X)\alpha(X)$ so $d_\mathfrak{l}^2$ is zero on $\mathcal{A}^{\infty}_L(\mathfrak{l},W)$ because any element of $\mathcal{A}^{\infty}_L(\mathfrak{l},W)$ is $L$-invariant.
\begin{defi}

The equivariant cohomology $\mathcal{H}_L^{\infty}(\mathfrak{l} ,W)$ with smooth coefficients is the cohomology of the complex $(\mathcal{A}^{\infty}_L(\mathfrak{l} ,W),d_{\mathfrak{l}})$.
\end{defi}

Let $E$ be a $\mathbb{Z}/2\mathbb{Z}$-graded vector bundle on $W$ . Let $\mathcal{A}^{\infty}_L(W,E)=\big(C^\infty (\mathfrak{l})\otimes \mathcal{A}(W,E)\big)^L$, where $\mathcal{A}(W,E)$ denote the differential form on $W$, with values in $E$. 
\noindent
Let us recall the definitions of a super-connection and its curvature \cite{Quillen:superco,Quillen:superco:thom,BGV}.  
\begin{defi}
A super-connection on $E$ is an odd-parity first-order differential operator
$$\mathbb{A} : \mathcal{A}^{\pm}(M,E) \rightarrow \mathcal{A}^{\mp}(M,E),$$
which satisfies Leibniz's rule in the $\mathbb{Z}/2\mathbb{Z}$-graded sense: if $\alpha \in \mathcal{A}(M)$ and $\theta \in \mathcal{A}(M,E)$, then
$$\mathbb{A}(\alpha \wedge \theta)=d\alpha \wedge \theta +(-1)^{|\alpha|}\alpha \wedge \mathbb{A}\theta,$$
where $|\alpha|$ is the degree of $\alpha$. \\
The curvature $F$ of a super-connection $\mathbb{A}$ is the operator $F=\mathbb{A}^2$ on $\mathcal{A}(M,E)$.
\end{defi}

Let us recall the definition of the usual Chern character of a super-connexion. 
\begin{defi}
Let $\mathbb{A}$ be a super-connection on $E$. The Chern character of $\mathbb{A}$ is defined by:
$$\Ch(\mathbb{A})=\Str(e^F) \in \mathcal{A}(W),$$
where $\Str$ is the super-trace on $E$.
\end{defi}

\noindent
Let $\mathbb{A}$ be a $L$-invariant super-connection on $E$. The operator $\mathbb{A}_\mathfrak{l}$ defined by
$$(\mathbb{A}_\mathfrak{l}\alpha)(X)=(\mathbb{A}-\iota(X_W^*))(\alpha(X)), ~\forall \alpha \in \mathcal{A}(W,E) ~\mathrm{and}~X\in \mathfrak{l},$$ is called the equivariant super-connection. The equivariant curvature $F_\mathfrak{l}$ is given by $F_\mathfrak{l}(X)=(\mathbb{A}-\iota(X))^2+\mathscr{L}^E(X)$. Denote by $\mu(X)=\mathscr{L}(X)-[\iota(X),\mathbb{A}]$ the moment of $X\in \mathfrak{l}$ with respect to the super-connection $\mathbb{A}$. Then we have $F_\mathfrak{l}(X)=F+\mu(X)$, where $F$ is the curvature of $\mathbb{A}$.\\
Let us recall some equivariant forms. Denote by $W^s$ the submanifold of fixed points $\{w\in W | s\cdot w=w\}$. Denote by $L(s)$ the subgroup of $L$ of those elements that commute with $s$. We denote by $\mathfrak{l}(s)$ the Lie algebra of $L(s)$, it consists of the elements $X \in \mathfrak{l}$ such that $s\cdot X=X$. On the submanifold $W^s$ of fixed points of $s$, the action of $s$ on $E_{|W^s}$ preserves the fibers. We denote by $s^E : E_{|W^s} \rightarrow E_{|W^s}$ the linear operator associated to the action of $s$ on $E_{|W^s}$.

\begin{defi}Let $s\in L$. The $s$-equivariant Chern character of an $L$-invariant super-connection $\mathbb{A}$ is defined by: 
$$\Ch_s(\mathbb{A},X)=\Str \Big( s^E\cdot e^{-F_{\mathfrak{l}}(X)_{|W^s}} \Big) \in \mathcal{A}^\infty_{L(s)}(\mathfrak{l}(s),W^s),$$ where $F_{\mathfrak{l}}$ is the equivariant curvature of $\mathbb{A}$ and $X \in \mathfrak{l}$. 
\end{defi}
\noindent
Of course when the group $L$ is trivial then the $s$-equivariant Chern character is just the usual Chern character.

\begin{defi}
Let $\mathcal{V} \rightarrow W$ be a real $L$-equivariant vector bundle on $W$. Assume that $\mathcal{V}\rightarrow W$ is equipped with a $L$-invariant connection $\nabla $ with equivariant curvature $R(X)$, then the equivariant $\hat{A}$-genus $\hat{A}(\mathcal{V} )$ is defined by  $$\hat{A}(\mathcal{V} )(X)=\det^{1/2} \bigg(\dfrac{R(X)}{e^{R(X)/2}-e^{-R(X)/2}}\bigg), $$
which makes sense for $X$ in a small enough neighborhood of $0\in \mathfrak{l}$.
\end{defi}

\begin{defi}
Let $s\in L$. Denote by $N$ the normal bundle to $W^s$ in $W$ and $R_N(X)$, $X\in \mathfrak{l}(s)$, the equivariant curvature of $N$ with respect to a $L(s)$-invariant connection. The element 
$$D_s(N,X)=\det\big( 1-s^N\exp(R_N(X))\big),~ X\in \mathfrak{l}(s)$$
is a $L(s)$-equivariant closed form on $W^s$. 
\end{defi}

\begin{defi}
Let $E \rightarrow W$ be an Euclidean oriented vector bundle with orientation $o$. Denote by $F(X)$ the equivariant curvature associated to a $L$-invariant metric connection. The equivariant Euler class is defined by
$$\mathrm{Eul}_o(E)(X)=(-2\pi)^{rg(E)/2}\det^{1/2}_o(F(X)),$$
where $\det^{1/2}_o$ means the Pfaffian given by the orientation of $E$.
\end{defi}

\begin{remarque}
The forms $\Ch_s(\mathbb{A})$, $\hat{A}(\mathcal{V})$, $D_s(N)$ and $\mathrm{Eul}_o(E)$ are all equivariantly closed. The classes of $\hat{A}(\mathcal{V})$ and $D_s(N)$ do not depend on the connection. The class of $\mathrm{Eul}_o(E)$ only depends on the chosen orientation $o$ of $E$.
\end{remarque}

\subsection{Equivariant cohomology with generalised coefficients}\label{section:equi:coh:coefgen}
We recall some cohomological constructions \cite{kumar1993equivariant}.
Let $L$ be a compact Lie group with Lie algebra $\mathfrak{l}$. Let $W$ be a $L$-manifold. 
Let us recall the definition of the equivariant cohomology with generalised coefficients 
\cite{duflo1990orbites}, see also \cite{kumar1993equivariant}. Let $C^{-\infty }(\mathfrak{l},\mathcal{A}(W))$ be the space of generalised functions on $\mathfrak{l}$ with values in $\mathcal{A}(W)$. By definition, this is the space of continuous linear maps from the space $\mathcal{D}(\mathfrak{l})$ of $C^{\infty}$ densities with compact support on $\mathfrak{l}$ to $\mathcal{A}(W)$, where $\mathcal{D}(\mathfrak{l})$ and $\mathcal{A}(W)$ are equipped with the $C^{\infty}$ topologies. So if $\alpha \in C^{-\infty }(\mathfrak{l},\mathcal{A}(W))$ and if $\phi \in \mathcal{D}(\mathfrak{l})$ then $\langle \alpha ,\phi \rangle $ is a differential form on $W$ denoted by $\int_\mathfrak{l} \alpha(X)\phi(X)dX$. A $C^\infty $ density with compact support on $\mathfrak{l}$ is also called a test density, and a $C^{\infty}$ function with compact support on $\mathfrak{l}$ is called a test function. Denote by $E^i$ a basis of $\mathfrak{l}$ and $E_i$ its dual basis. Let $d$ be the operator on $C^{-\infty }(\mathfrak{l},\mathcal{A}(W))$ defined by $$\langle d\alpha , \phi \rangle =d\langle \alpha ,\phi \rangle, ~\mathrm{pour} ~\phi\in \mathcal{D}(\mathfrak{l}).$$
Let $\iota$ be the operator defined by
$$\langle \iota \alpha ,\phi \rangle = \sum\limits_i \iota((E^i)^*_W)\langle \alpha ,E_i \phi \rangle ,$$
where $(E^i)^*_W$ means as usual the vector field generated  by $E^i\in \mathfrak{l}$ on $W$ and where $E_i\phi$ means the tensor product $E_i \otimes \phi $. 
Let then $d_\mathfrak{l}$ be the operator on $C^{-\infty }(\mathfrak{l},\mathcal{A}(W))$ defined by 
$$d_\mathfrak{l}\alpha=d\alpha - \iota \alpha .$$
The operator $d_\mathfrak{l}$ coincides with the equivariant differential on $C^{\infty }(\mathfrak{l},\mathcal{A}(W))\subset C^{-\infty }(\mathfrak{l},\mathcal{A}(W))$. The group $L$ naturally acts on $C^{-\infty }(\mathfrak{l},\mathcal{A}(W))$ by $\langle g\cdot \alpha , \phi \rangle = g \cdot \langle \alpha ,g^{-1} \cdot \phi \rangle .$ The action of $L$ commutes with the operators $d$ and $\iota $. The space of generalized functions on $\mathfrak{l}$ with values in $\mathcal{A}(W)$ which are $L$-equivariant  is denoted by 
$$\mathcal{A}^{-\infty}_L(\mathfrak{l} , \mathcal{A}(W))=C^{-\infty }(\mathfrak{l},\mathcal{A}(W))^L.$$
The operator $d_\mathfrak{l}$ preserves $\mathcal{A}^{-\infty}_L(\mathfrak{l},W)$ and satisfies $d_\mathfrak{l}^2=0$.
Similarly, if we replace $\mathcal{A}(W)$ by $\mathcal{A}_c(W)$ the space of compactly supported forms then we can define $\mathcal{A}^{-\infty}_{c,L}(\mathfrak{l},W)=C^{-\infty}(\mathfrak{l},\mathcal{A}_c(W))^L$. \\
We also need to consider $L$-equivariant generalized forms which are defined on an open neighbourhood of the origin in $\mathfrak{l}$.
If $O$ is an $L$-invariant open subset of $\mathfrak{l}$, we denote by $\mathcal{A}^{-\infty }_{L}(O,W)$ and $\mathcal{A}^{-\infty }_{c,L}(O,W)$ the spaces obtained similarly.  \\
Let $U$ be a $L$-invariant open set in $W$.  The space of forms with generalized coefficients and with support in $U$ is denoted by $\mathcal{A}^{-\infty}_U(O,W)$. This is the space of differential forms with generalized coefficients such that there is a $L$-invariant closed subspace $C_\alpha \subset U$ such that $\int \alpha (X)\phi (X) dX$ is supported in $C_\alpha$ for any test density $\phi$. 

\begin{notation}
The cohomology of the complex $(\mathcal{A}^{-\infty}_L(\mathfrak{l},W) ,d_\mathfrak{l})$ is denoted by $\mathcal{H}^{-\infty}_L(\mathfrak{l},W)$.\\ 
The cohomology of the complex $(\mathcal{A}^{-\infty}_{c,L}(\mathfrak{l},W) ,d_\mathfrak{l})$ is denoted by $\mathcal{H}^{-\infty}_{c,L}(\mathfrak{l},W)$.\\
The cohomology of the complex $(\mathcal{A}^{-\infty}_{L}(O,W) ,d_\mathfrak{l})$ is denoted by $\mathcal{H}^{-\infty}_{L}(O,W)$.\\
The cohomology of the complex $(\mathcal{A}^{-\infty}_{c,L}(O,W) ,d_\mathfrak{l})$ is denoted by $\mathcal{H}^{-\infty}_{c,L}(O,W)$.\\
The cohomology of the complex $(\mathcal{A}^{-\infty}_{U}(O,W) ,d_\mathfrak{l})$ is denoted by $\mathcal{H}^{-\infty}_{U}(O,W)$.
\end{notation}

There is a natural map $$\mathcal{H}^{\infty}(\mathfrak{l},W)\rightarrow \mathcal{H}^{-\infty}(\mathfrak{l},W)$$ induced by the inclusion $\mathcal{A}^{\infty}_L(\mathfrak{l},\mathcal{A}(W)) \hookrightarrow \mathcal{A}^{-\infty}_L(\mathfrak{l},\mathcal{A}(W))$.
If $p : M \rightarrow B$ is a oriented $L$-equivariant fibration, then integration along the fibers  
$\int_{M|B}$ defines a map from $\mathcal{A}^{-\infty}_{c,L}(\mathfrak{l},M)$ to $\mathcal{A}^{-\infty}_{c,L}(\mathfrak{l},B)$:
$$\langle \int_{M|B}\alpha , \phi \rangle := \int_{M|B}\langle \alpha ,\phi \rangle, ~\forall \phi \in \mathcal{D}(\mathfrak{l}),$$ and induces a well defined map:
$$\int_{M|B} :  \mathcal{H}^{-\infty}_{c,L}(\mathfrak{l},M)\rightarrow \mathcal{H}^{-\infty}_{c,L}(\mathfrak{l},B).$$
Finally note that if $\alpha \in \mathcal{H}^{\infty}_{c,L}(\mathfrak{l},M)$, and $\beta \in \mathcal{H}^{-\infty}_{c,L}(\mathfrak{l},B)$ then $\alpha \wedge p^* \beta \in \mathcal{H}^{-\infty}_{c,L}(\mathfrak{l},M)$ and
$$\int_{M|B}\alpha \wedge p^\beta =(\int_{M|B}\alpha)\wedge \beta .$$
Let us recall some fact about restriction of generalized functions \cite{paradan2008index,kumar1993equivariant}. \\
Let $s\in G$ and let $U_s(0)$ be an open $L(s)$-invariant neighborhood of $0$ in $\mathfrak{l}(s)$ such that the map $[g,Y]\mapsto gse^Yg^{-1}$ is an open embedding of $L\times_{L(s)}U_s(0)$ on an open neighborhood of the conjugacy class $L\cdot s =\{gsg^{-1}, \ g\in L\} \simeq L/L(s)$.\\
Let $S\in \mathfrak{l}$ and let $U_S(0)$ be an open $L(S)$-invariant neighborhood of $0$ in $\mathfrak{l}(s)$ such that the map $[g,Y]\mapsto Ad(g)(S+Y)$ is an open embedding of $L\times_{L(s)}U_S(0)$ on an open neighborhood of the adjoint orbit $L\cdot S \simeq L/L(S)$.\\
Let $\Theta \in C^{\infty}(L)^{Ad(L)}$ be a $Ad(L)$-invariant generalized function on $L$. For any $s\in L$, $\Theta $ defines a $Ad(L)$-invariant generalized function on $L \times_{L(s)} U_s(0) \hookrightarrow L$ which admits a restriction to the submanifold $U_s(0)$ denoted by
 $$\Theta_{\|s} \in C^{-\infty}(U_s(0))^{L(s)}$$
in \cite{paradan2008index}. If $\Theta$ is smooth then $\Theta_{\|s}(Y)=\Theta(se^Y)$. \\
Similarly, let $\theta \in C^{\infty}(\mathfrak{l})^{Ad(L)}$ be a $Ad(L)$-invariant generalized function on $\mathfrak{l}$. For any $S\in \mathfrak{l}$, $\theta $ defines a $Ad(L)$-invariant generalized function on $L \times_{L(S)} U_S(0) \hookrightarrow \mathfrak{l}$ which admits a restriction to the submanifold $U_S(0)$ denoted by
 $$\theta_{\|S} \in C^{-\infty}(U_S(0))^{L(S)}$$
in \cite{paradan2008index}. If $\theta$ is smooth then $\theta_{\|S}(Y)=\theta(S+Y)$. 
We have $L(se^S)=L(s)\cap L(S), ~\forall S\in U_s(0)$. Let $\Theta\|_s \in C^{-\infty}(U_s(0))^{Ad(L(s))}$ be the restriction of a generalized function $\Theta \in C^{-\infty}(L)^{Ad(L)}$. For any $S\in U_s(0)$, the generalized function $\Theta\|_s$ admits a restriction $(\Theta\|_s)\|_S$ which is a $Ad(L(se^S))$-invariant generalized function defined in a neiborhood of $0 \in \mathfrak{l}(s)\cap \mathfrak{l}(S)=\mathfrak{l}(se^S)$.

\begin{lem}\cite{DV:comoEquiDescente}
Let $\Theta \in C^{-\infty}(L)^{Ad(L)}$.
\begin{itemize}
\item For $s\in L$ and $S\in U_s(0)$, we have the following equality of generalized functions defined in a neighborhood of $0 \in \mathfrak{l}(se^S)$
$$(\Theta\|_s)\|_S=\Theta\|_{se^S}.$$
When $\Theta \in C^{-\infty}(L)^{Ad(L)}$ is smooth this condition is easy to check: for $Y\in \mathfrak{l}(se^S)$, we have 
$$(\Theta\|_s)\|_S(Y)=\Theta\|_s(S+Y)=\Theta(se^{S+Y})=\Theta(se^Se^Y)=\Theta\|_{se^S}(Y).$$
\item Let $s, k\in L$. We have the following equality of generalized functions defined in a neighborhood of $0 \in \mathfrak{l}(s)$ 
$$(\Theta\|_s)\|_S=\Theta\|_{ksk^{-1}}\circ Ad(k).$$
\end{itemize}
\end{lem}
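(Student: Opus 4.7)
My plan is to prove both identities by unwinding the intrinsic definition of the slice restriction $\Theta \mapsto \Theta\|_s$ in terms of pulling back $\Theta$ through the local chart around the conjugacy class $L\cdot s$, and then to extend from the smooth case (already displayed in the statement) to arbitrary invariant generalized functions by a continuity/density argument.

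First I would make the definition of restriction precise. Given $s\in L$, the hypothesis on $U_s(0)$ says that $\phi_s : L\times_{L(s)} U_s(0) \to L$, $[g,Y]\mapsto g s e^Y g^{-1}$, is an open embedding onto an $Ad(L)$-invariant neighborhood of $L\cdot s$. If $\Theta\in C^{-\infty}(L)^{Ad(L)}$, then $\phi_s^*\Theta$ is an $L$-invariant generalized function on $L\times_{L(s)} U_s(0)$, which is the same as an $L(s)$-invariant generalized function on the slice $\{e\}\times U_s(0)\simeq U_s(0)$; by definition this slice datum is $\Theta\|_s$. The map $\Theta\mapsto \Theta\|_s$ is continuous for the natural topologies (dual to smooth compactly supported densities), and the smooth invariant functions are dense in $C^{-\infty}(L)^{Ad(L)}$ (for instance by convolution with an $Ad$-invariant approximate identity).

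For the first identity, the point is to compose charts. Given $s\in L$ and $S\in U_s(0)$ with $L(se^S)=L(s)\cap L(S)$, the map
$$\psi_S : L(s)\times_{L(se^S)} U_S(0) \longrightarrow U_s(0), \qquad [h,Y]\mapsto Ad(h)(S+Y),$$
is an open embedding near $S$, so it describes the restriction $(\Theta\|_s)\|_S$. Composing with $\phi_s$ one checks directly that $\phi_s\circ(\mathrm{id}\times_{}\psi_S) = \phi_{se^S}$ on a neighborhood of $[e,0]$, because $s e^{Ad(h)(S+Y)}$ and $h s e^S e^Y h^{-1}$ agree up to the $L$-action that is quotiented out (this is precisely the content of the computation given in the statement for smooth $\Theta$). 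Hence $(\Theta\|_s)\|_S$ and $\Theta\|_{se^S}$ are obtained as the slice data of the same pullback $\phi_{se^S}^*\Theta$, and the identity follows. The distributional case is then obtained by the density of smooth $Ad(L)$-invariant functions together with continuity of both restriction operations.

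For the second identity, conjugation by $k\in L$ maps $U_s(0)$ diffeomorphically onto $U_{ksk^{-1}}(0)$ via $Ad(k)$, and the intertwining relation $\phi_{ksk^{-1}}([g,Y]) = \phi_s\bigl([gk, Ad(k^{-1})Y]\bigr)$ together with the $Ad(L)$-invariance of $\Theta$ immediately yields $\Theta\|_{ksk^{-1}} = \Theta\|_s \circ Ad(k^{-1})$, which is the stated relation after reorganizing the $k$'s; once again one can first verify it on smooth invariant $\Theta$ and then extend by continuity. The only real obstacle in the whole argument is the density/continuity step: it requires checking that the pullback-to-a-slice map is continuous from $C^{-\infty}(L)^{Ad(L)}$ to $C^{-\infty}(U_s(0))^{L(s)}$ in the weak topologies, which is standard but has to be invoked explicitly since neither identity makes literal pointwise sense for a genuine distribution.
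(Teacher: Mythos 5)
Your argument is correct. Note first that the paper offers no proof of this lemma: it is quoted from Duflo--Vergne, and the only proof-like content it records is the smooth-case computation $\Theta(se^{S+Y})=\Theta(se^{S}e^{Y})$ (valid because $[S,Y]=0$ for $Y\in\mathfrak{l}(se^{S})\subset\mathfrak{l}(S)$), so there is no in-paper argument to match yours against. Your chart-based formulation is the standard one and it works: for the first identity the composition of slice charts, $g\,s\,e^{Ad(h)(S+Y)}\,g^{-1}=(gh)\,s\,e^{S}e^{Y}\,(gh)^{-1}$ for $h\in L(s)$ and $[S,Y]=0$, shows that both $(\Theta\|_s)\|_S$ and $\Theta\|_{se^{S}}$ are the slice data of one and the same pullback of $\Theta$; for the second, the intertwining $\phi_{ksk^{-1}}([g,Y])=\phi_s([gk,Ad(k^{-1})Y])$ together with $Ad(L)$-invariance gives the conjugation relation (which, as you implicitly note, is misstated in the lemma: the left-hand side of the second bullet should read $\Theta\|_s$, not $(\Theta\|_s)\|_S$). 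One small improvement: your final density-and-continuity step is not actually needed. Since $\Theta\|_s$ is \emph{defined} for arbitrary invariant generalized functions as the slice datum of the pullback (pairing against a test density on the slice spread invariantly over the orbit directions), the chart-coincidence argument already proves both identities directly at the distributional level; the approximation by smooth invariant functions is a legitimate alternative route, but it adds the burden of verifying continuity of the slice restriction, which you correctly flag as the only delicate point of that route.
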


\begin{thm}\cite{DV:comoEquiDescente}
Let $\theta_s \in C^{-\infty}(U_s(0))^{Ad(L(s))}$ be a family of generalized functions. Assume that the following conditions are verified.
\begin{itemize}
\item Invariance : $\forall k, s\in L$, we have the following equality of generalized functions defined in a neighborhood of $0\in \mathfrak{l}(s)$ 
$$\theta_s=\theta_{ksk^{-1}}\circ Ad(k).$$
\item Compatibility : $\forall s \in L$ and $S\in U_s(0)$, we have the following equality of generalized functions defined in a neighborhood of $0\in \mathfrak{l}(se^S)$ 
$$\theta_s\|_S=\theta_{se^S}.$$
\end{itemize}
Then there exists a unique generalized function $\Theta \in C^{-\infty}(L)^{Ad(L)}$ such that, for any $s\in L$, the equality $\Theta\|s=\theta_s$ holds in $C^{-\infty}(U_s(0))^{Ad(L(s))}$.
\end{thm}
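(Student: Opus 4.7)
The plan is to construct $\Theta$ by gluing together local definitions attached to the conjugacy classes of $L$, using the slice neighborhoods introduced just before the lemma. The invariance hypothesis will be what ensures that each local piece is well-defined as an $Ad(L)$-invariant generalized function, while the compatibility hypothesis will be what forces the local pieces to agree on overlaps. The preceding lemma will be invoked in reverse, as a computational tool controlling restriction of any candidate global $\Theta$ to the slices.

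Step one (local construction). For each $s\in L$, the map $\Phi_s : L\times_{L(s)}U_s(0)\to L$, $[g,Y]\mapsto gse^Yg^{-1}$, is an $L$-equivariant open embedding onto an $L$-invariant open neighborhood $\Omega_s$ of the conjugacy class $L\cdot s$. Given a test density $\phi$ on $\Omega_s$, pulling back through $\Phi_s^{-1}$ and integrating out the $L$-direction (using normalized Haar measure on the fiber $L/L(s)$) produces a test density $\phi_s$ on $U_s(0)$, which is automatically $Ad(L(s))$-invariant. Pairing $\phi_s$ with $\theta_s$ yields a generalized function $\Theta_s\in C^{-\infty}(\Omega_s)^{Ad(L)}$. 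A direct unwinding of the slice coordinates shows that $\Theta_s\|_s=\theta_s$. Only the $Ad(L(s))$-invariance of $\theta_s$ is used here.

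Step two (gluing on overlaps). Since $L$ is compact, the family $\{\Omega_s\}_{s\in L}$ is an open cover of $L$, so it suffices to show that whenever $\Omega_s\cap\Omega_{s'}\neq\emptyset$ the two local generalized functions $\Theta_s$ and $\Theta_{s'}$ coincide on their common domain. By $Ad(L)$-invariance of both, it is enough to compare them at slice points of the form $t:=se^S$ with $S\in U_s(0)$. Applying the preceding lemma to $\Theta_s$ (viewed as a local candidate) gives
$$\Theta_s\|_t=(\Theta_s\|_s)\|_S=\theta_s\|_S,$$
and the compatibility hypothesis rewrites this as $\theta_{se^S}=\theta_t$. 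Writing $t=k s' e^{S'} k^{-1}$ with $S'\in U_{s'}(0)$ and applying the analogous computation together with the invariance hypothesis yields $\Theta_{s'}\|_t=\theta_t$ as well. Because two $Ad(L)$-invariant generalized functions that have the same restriction at $t$ necessarily coincide on a full slice neighborhood of $t$, this finishes the overlap check.

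Step three (conclusion and uniqueness). By the sheaf property of generalized functions on the compact manifold $L$, the family $\{\Theta_s\}_{s\in L}$ patches to a unique $\Theta\in C^{-\infty}(L)^{Ad(L)}$, and by construction $\Theta\|_s=\Theta_s\|_s=\theta_s$ for every $s\in L$. Uniqueness is immediate from the same cover: if $\Theta,\Theta'$ both satisfy the conclusion, then $(\Theta-\Theta')\|_s=0$ for every $s$ forces $\Theta-\Theta'$ to vanish on each $\Omega_s$ and hence on all of $L$. The main obstacle is Step two, where the two hypotheses must be used in the correct order, the compatibility condition governing the slice/exponential direction and the invariance condition governing passage between conjugate representatives; a secondary technical point is to verify that the pushforward construction in Step one is canonical and that the Jacobian bookkeeping makes $\Theta_s\|_s$ equal $\theta_s$ exactly rather than up to a normalization, which follows from the fact that $\Phi_s$ is a diffeomorphism onto $\Omega_s$ and $s\exp:\mathfrak{l}(s)\to L$ is a local diffeomorphism at the origin.
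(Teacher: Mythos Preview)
The paper does not give its own proof of this theorem; it is quoted from \cite{DV:comoEquiDescente} as a known result and used as a black box. So there is nothing in the paper to compare your argument against.

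That said, your proposal is the standard proof of this gluing statement and is essentially correct. You induce each $\theta_s$ to an $Ad(L)$-invariant generalized function $\Theta_s$ on the slice neighborhood $\Omega_s$, use the preceding lemma together with the two hypotheses to verify agreement on overlaps, and then glue by the sheaf property of generalized functions.

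Two places could be tightened. First, in Step~2 the logical structure is: the overlap $\Omega_s\cap\Omega_{s'}$ is itself covered by slice neighborhoods $\Omega_t$, and on each such $\Omega_t$ your computation shows that both $\Theta_s$ and $\Theta_{s'}$ coincide with the induction of $\theta_t$, hence are equal there. Your sentence ``two $Ad(L)$-invariant generalized functions that have the same restriction at $t$ necessarily coincide on a full slice neighborhood of $t$'' is true, but it is precisely the statement that the induction/restriction maps between $C^{-\infty}(\Omega_t)^{Ad(L)}$ and $C^{-\infty}(U_t(0))^{Ad(L(t))}$ are mutually inverse; saying this explicitly (and noting that this is the same construction as in Step~1) would make the argument cleaner and would also dispose of the Jacobian worry you flag at the end. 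Second, in Step~2 you should make explicit that the invariance hypothesis is used to pass from $\theta_{s'e^{S'}}$ to $\theta_t$ when $t=k(s'e^{S'})k^{-1}$, complementing the compatibility hypothesis which handles the exponential direction; you say this in words but the actual chain of equalities is not written out.
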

For details on restrictions of invariant generalized functions see for instance \cite{DV:comoEquiDescente, paradan2008index}.
\subsection{Chern character of a morphism\cite{paradan2008equivariant}}\label{section:Chern:Character}

In this section, we recall the Chern character of a morphism \cite{paradan2008equivariant}. Recall that the Atiyah-Singer index formula involves integration over the non compact manifold $T^*M$ and so to perform integration the representative of the Chern character has to be compactly supported on $T^*M$. In the equivariant case, Berline and Vergne defined a Chern character for what they call \emph{good symbols} in the context of elliptic and $G$-transversally elliptic symbols \cite{BV:ChernCharacterTransversally}. In this context, they get a Chern character with some decreasing properties which allows them to permorf integration. The goal of the construction of \cite{paradan2008equivariant} is to define a Chern character compactly supported for non elliptic symbol which are transversally elliptic with respect to a group action without assuming that the symbol is a \emph{good symbol}. Paradan and Vergne get a compactly supported Chern character for any $G$-transversally elliptic symbol in \cite{paradan2008equivariant}, but to obtain that they have to use equivariant cohomology with generalized coefficients on the Lie algebra of $G$. Let $L$ be a compact Lie group and denote again by $\mathfrak{l}$ its Lie algebra. Recall that the analytical index of Atiyah is a distribution on the group $L$ which is $Ad(L)$-invariant so it is natural to define a Chern character with  coefficients generalized functions on $\mathfrak{l}=Lie(L)$.\\ 
Let $W$ be a $L$-manifold. Let $\lambda$ be a real $L$-invariant $1$-form on $W$. For any $w\in W$, we have $\lambda(w)\in T^*_wW$.

\begin{defi}

The $1$-form $\lambda$ defines an equivariant map 
$$f_\lambda : W \rightarrow \mathfrak{l}^* ~\mathrm{given\ by} <f_\lambda(w),X>=<\lambda(w),X^*_W(w)>. $$
\end{defi}

\noindent 
Denote by $C_\lambda$ the $L$-invariant closed subspace of $W$ given by:
$$C_\lambda=\{f_\lambda = 0\}.$$
Note that when $\lambda$ is the Liouville $1$-form on $T^*M$ then $f_\lambda=T^*_LM$. 
Let $\sigma : E^+ \rightarrow E^-$ be a $L$-equivariant morphism on $W$, and denote by $C_{\lambda,\sigma}$ the invariant closed subspace given by
$$C_{\lambda,\sigma}=C_\lambda\cap \mathrm{supp}(\sigma) ~\mathrm{and}~v_\sigma=\begin{pmatrix}
0&\sigma^*\\
\sigma &0
\end{pmatrix}. $$  
\noindent
In the construction of \cite{paradan2008equivariant}, the super-connection $\mathbb{A}^{\sigma,\lambda}$ used to define the Chern character is a combinaison of a super-connection $\mathbb{A}$, the symbol $\sigma$ and the $1$-form $\lambda$. Here $\sigma$ reduces the support of the Chern character to the support of $\sigma $ and $\lambda$ reduces the support of the Chern character to $C_\lambda$ and so the Chern character is supported in $C_{\lambda,\sigma}$.  
Let $\mathbb{A}$ be a $L$-invariant super-connection on $E$. 
We will use the following notations (see \cite{paradan2008equivariant}):  
\begin{enumerate}
\item $\mathbb{A}^{\sigma,\lambda}(t)=\mathbb{A}+it(v_\sigma+\lambda)$, $t\in \mathbb{R}$;
\item $F(\sigma,\lambda,\mathbb{A},t)(X)=-t^2v_\sigma^2-it<f_\lambda,X>+\mu^\mathbb{A}(X)+it[\mathbb{A},v_\sigma]+\mathbb{A}^2+itd\lambda$, which is the equivariant curvature of $\mathbb{A}^{\sigma,\lambda}(t)$;
\item $\eta_s(\sigma,\lambda,\mathbb{A},t)(X)=-\Str \bigg(i(v_\sigma+\lambda )s^Ee^{F(\sigma,\lambda,\mathbb{A},t)(X)}_{|W^s}\bigg)=-e^{itd_\g\lambda (X)}\Str\bigg(i(v_\sigma +\lambda )s^Ee^{F(\sigma,\mathbb{A},t)(X)}_{|W^s}\bigg)$, which is the transgression form associated to the Chern character of $\mathbb{A}^{\sigma ,\lambda}(t)$ ;
\item $\beta_s (\sigma,\lambda ,\mathbb{A})=\int^\infty_0 \eta_s (\sigma,\lambda,\mathbb{A},t)dt$, which is a form with generalized coefficients since the convergence of $\int^T_0 \eta_s (\sigma,\lambda,\mathbb{A},t)dt$ when $T$ goes to infinity makes sense as a distribution.
\end{enumerate}

For the following theorem see \cite{paradan2008equivariant} and also Section 3.3 of \cite{paradan2008index}.

\begin{thm}$\mathrm{\cite[Theorem\  3.19]{paradan2008equivariant}}$\label{thm:chern:paradan}~\\
$\bullet$ For any $L(s)$-invariant open neighborhood $U$ of $C_{s,\lambda,\sigma}=C_{\lambda,\sigma}\cap W^s$, let $\chi \in C^{\infty}(W^s)^{L(s)}$ be a $L(s)$-invariant function which is equal to $1$ in a neighborhood of $C_{s,\lambda,\sigma}$ and with support contained in $U$. 
\begin{enumerate}
\item Then $c_s(\sigma,\lambda ,\mathbb{A},\chi)=\chi\Ch_s(\mathbb{A})+d\chi \beta_s(\sigma,\lambda,\mathbb{A})$
is an equivariant closed differential form with generalized coefficients supported in $U$. Furthermore, we have
$$c_{se^X}(\sigma ,\lambda ,\mathbb{A},\chi)(Y)=c_s(\sigma ,\lambda, \mathbb{A}, \chi)(X+Y)_{|W^s\cap W^X},$$
for any $X\in \mathfrak{l}(s)$ and $Y \in \mathfrak{l}(s)\cap \mathfrak{l}(X)$.
\item  The cohomology class $c_U(\sigma,\lambda ,s )\in \mathcal{H}^{-\infty}_U(\mathfrak{l}(s),W^s)$ of $c_s(\sigma,\lambda ,\mathbb{A},\chi)$ does not depend on the choices of the super-connection $\mathbb{A},$ $\chi$ and the hermitian structures on $E^\pm$. 
\item Moreover, the inverse family $c_U(\sigma,\lambda)$ when $U$ runs over the neighborhood of $C_{s,\lambda,\sigma }$ defines a class
$$\Ch_\mathrm{sup}(\sigma,\lambda ,s )\in \mathcal{H}^{-\infty}_{C_{s,\lambda,\sigma}}(\mathfrak{l}(s),W^s),$$
where $\mathcal{H}^{-\infty}_{C_{s,\lambda,\sigma}}(\mathfrak{l}(s),W^s)$ is the projective limit of the following projective system $(\mathcal{H}^{-\infty}_U(\mathfrak{l}(s),W^s))_{C_{s,\lambda ,\sigma}\subset U}$.
\end{enumerate}
$\bullet$ The image $\Ch_{\sup }(\sigma,\lambda ,s )$ in $ \mathcal{H}^{-\infty}_{\mathrm{supp}(\sigma ) \cap W^s}(\mathfrak{l}(s),W^s)$ is equal to $\Ch_{\sup}(\sigma, s)$.\\
$\bullet $ Let $F$ be a $L$-invariant subspace of $W^s$. For $\tau\in [0,1]$, Let $\sigma_\tau :E^+ \rightarrow E^-$ be a differential family of $L$-equivariant smooth morphisms and let $\lambda_\tau$ be a $L$-invariant differential family of $1$-forms such that $C_{s,\lambda_\tau,\sigma_\tau} \subset F$ $\forall \tau\in [0,1]$. Then all the classes $\Ch_{\sup } (\sigma_\tau ,\lambda_\tau ,s )$ coincide in $\mathcal{H}_F^{-\infty}(\mathfrak{l}(s),W^s)$.
\end{thm}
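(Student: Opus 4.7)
The strategy rests on the super-connection transgression formula applied to the family $\mathbb{A}^{\sigma,\lambda}(t) = \mathbb{A} + it(v_\sigma + \lambda)$. First I would establish the pointwise identity
$$\frac{d}{dt}\Str\bigl(s^E e^{F(\sigma,\lambda,\mathbb{A},t)(X)}_{|W^s}\bigr) = -d_{\mathfrak{l}(s)}\,\eta_s(\sigma,\lambda,\mathbb{A},t)(X)$$
for smooth $X\in\mathfrak{l}(s)$, which is a straightforward equivariant extension of the standard Quillen transgression. The delicate input is the $t\to\infty$ behaviour of $\eta_s$: the block $-t^2 v_\sigma^2$ in the equivariant curvature provides Gaussian damping on $W^s\setminus \mathrm{supp}(\sigma)$, where $v_\sigma$ is invertible, while the imaginary term $-it\langle f_\lambda,X\rangle$ produces, after pairing with a test density $\phi(X)\,dX$ on $\mathfrak{l}(s)$ and integrating by parts in $X$, polynomial decay of arbitrary order at points of $W^s\setminus\{f_\lambda = 0\}$. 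Since $C_{s,\lambda,\sigma} = \mathrm{supp}(\sigma)\cap \{f_\lambda = 0\}\cap W^s$, these two mechanisms cover the complement of $C_{s,\lambda,\sigma}$, and the improper integral
$$\beta_s(\sigma,\lambda,\mathbb{A}) = \int_0^\infty \eta_s(\sigma,\lambda,\mathbb{A},t)\,dt$$
converges as a generalized form on $\mathfrak{l}(s)$ whose singular support lies in $C_{s,\lambda,\sigma}$. This convergence, and the corresponding vanishing of the $t=\infty$ boundary term of the transgression after pairing with test densities, is the \emph{main technical obstacle}.

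Granting the above analysis, integration of the transgression identity from $0$ to $\infty$ yields the key relation $d_{\mathfrak{l}(s)}\beta_s(\sigma,\lambda,\mathbb{A}) = \Ch_s(\mathbb{A})$ on $W^s\setminus C_{s,\lambda,\sigma}$. Since $\chi$ is $L(s)$-invariant, $\mathscr{L}(X)\chi = 0$ for $X\in\mathfrak{l}(s)$, hence $d_{\mathfrak{l}(s)}(d\chi) = 0$; the graded Leibniz rule combined with $d_{\mathfrak{l}(s)}\Ch_s(\mathbb{A}) = 0$ then gives
$$d_{\mathfrak{l}(s)}\, c_s(\sigma,\lambda,\mathbb{A},\chi) = d\chi \wedge \bigl(\Ch_s(\mathbb{A}) - d_{\mathfrak{l}(s)}\beta_s\bigr),$$
and this vanishes because $\mathrm{supp}(d\chi)$ lies where $\chi$ is non-locally-constant, hence disjoint from $C_{s,\lambda,\sigma}$; this proves item~1. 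The descent relation $c_{se^X}(\sigma,\lambda,\mathbb{A},\chi)(Y) = c_s(\sigma,\lambda,\mathbb{A},\chi)(X+Y)_{|W^s\cap W^X}$ then follows from the local identification $W^{se^X} = W^s\cap W^X$ near $X=0$, together with the factorisation $(se^X)^E = s^E\,e^{\mathscr{L}^E(X)}_{|W^s\cap W^X}$, applied to both $\Ch_s(\mathbb{A})$ and to every integrand in $\beta_s$.

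For item~2 of the first bullet, given two admissible data $(\mathbb{A}_i,\chi_i,\text{metrics})$ I would interpolate affinely, run the analogous transgression in the interpolation parameter, and extract an explicit equivariant primitive supported in any common open neighbourhood of $C_{s,\lambda,\sigma}$; this produces a well-defined class in $\mathcal{H}^{-\infty}_U(\mathfrak{l}(s),W^s)$. Compatibility of these classes as $U$ shrinks is automatic, and item~3 follows by taking the inverse limit. For the middle bullet, in the image group $\mathcal{H}^{-\infty}_{\mathrm{supp}(\sigma)\cap W^s}(\mathfrak{l}(s),W^s)$ the regulator $\lambda$ may be switched off (take $\chi=1$ on a neighbourhood of $\mathrm{supp}(\sigma)\cap W^s$): the class $[c_s]$ reduces to $[\Ch_s(\mathbb{A})]$ with support controlled only by $\sigma$, which by construction is $\Ch_{\sup}(\sigma,s)$ as defined in \cite{BV:ChernCharacterTransversally}. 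Finally, for the homotopy bullet, a smooth family $(\sigma_\tau,\lambda_\tau)$ with $C_{s,\lambda_\tau,\sigma_\tau}\subset F$ provides, through a second transgression in $\tau$, an equivariant primitive supported in $F$ relating the classes at $\tau = 0$ and $\tau = 1$; the decay estimates from the first paragraph apply uniformly in $\tau\in [0,1]$ since all the $C_{s,\lambda_\tau,\sigma_\tau}$ lie in the common closed set $F$.
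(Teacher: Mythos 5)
This theorem is not proved in the paper: it is quoted verbatim from Paradan--Vergne (\cite[Theorem 3.19]{paradan2008equivariant}, see also Section 3.3 of \cite{paradan2008index}), so there is no in-paper proof to compare against. Your sketch is a faithful reconstruction of the Paradan--Vergne argument: the Quillen transgression $\frac{d}{dt}\Ch_s(\mathbb{A}^{\sigma,\lambda}(t))=-d_{\mathfrak{l}(s)}\eta_s(t)$, the two decay mechanisms (Gaussian damping from $-t^2v_\sigma^2$ off $\mathrm{supp}(\sigma)$, oscillatory decay from $-it\langle f_\lambda,X\rangle$ after integration by parts against test densities off $\{f_\lambda=0\}$), the computation $d_{\mathfrak{l}(s)}c_s=d\chi\wedge(\Ch_s(\mathbb{A})-d_{\mathfrak{l}(s)}\beta_s)$ killed by the support condition on $d\chi$, parameter transgressions for independence of choices and for the homotopy bullet, and the deformation $\lambda_\tau=\tau\lambda$ with $F=\mathrm{supp}(\sigma)\cap W^s$ for the comparison with $\Ch_{\sup}(\sigma,s)$ (which, for the record, is also the Paradan--Vergne class of \cite{paradan2008equivariant}, not the Berline--Vergne one for good symbols). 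The one place where your outline stops short of a proof is exactly the part you flag as the main technical obstacle: the uniform-on-compacta rapid decay of $\langle\eta_s(t),\phi\rangle$ on $W^s\setminus C_{s,\lambda,\sigma}$, which in \cite{paradan2008equivariant} requires genuine work to combine the two mechanisms near the boundary of $\mathrm{supp}(\sigma)$ and to make the estimates uniform in the homotopy parameter; acknowledging it is appropriate for a sketch, but it is where essentially all of the content of the theorem lies.
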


\begin{defi}
If $C_{s,\lambda,\sigma}$ is a compact subspace of $W^s$, then we can define:
$$\Ch_c(\sigma,\lambda ,s )\in \mathcal{H}_c^{-\infty}(\mathfrak{l}(s),W^s)$$
as the image of $\Ch_{\sup}(\sigma , \lambda ,s )\in \mathcal{H}^{-\infty}_{C_{s,\lambda,\sigma}}(\mathfrak{l}(s),W^s)$ in $\mathcal{H}_c^{-\infty}(\mathfrak{l}(s),W^s)$. A representative of $\Ch_{c}(\sigma,\lambda ,s )$ is then given by any equivariant form $c_s(\sigma,\lambda,\mathbb{A},\psi)$ as before, with $\psi$ compactly supported. See again \cite{paradan2008equivariant}.
\end{defi}

\begin{remarque}
If $\sigma$ is a $L$-equivariant elliptic symbol then we get $\Ch_{c}(\sigma ,s )\in \mathcal{H}^{\infty}_c(\mathfrak{l}(s),W^s)$.

\end{remarque}

When the action of the group on the manifold is trivial then the $s$-equivariant Chern character can be compute using the usual Chern character with values in de Rham cohomology and the character morphism which associates to a representation its character. In fact, we have the following lemma from \cite{BV:formuleloc:Kirillov}.  

\begin{lem}\cite{BV:formuleloc:Kirillov}\label{lem : Ch_g=Chchi}
Assume that $L(s)$ acts trivially on a manifold $W$. We have the following isomorphism:
\begin{align*}
\k(W)\otimes R(L(s)) \rightarrow & \k_{\mathrm{L}(s)}(W)\\
[\sigma]\otimes V  \mapsto &[\sigma \otimes id_V],
\end{align*}
\begin{align*}
H(W,\mathbb{C})\otimes C^{\infty }(\mathfrak{l}(s))^{L(s)} \rightarrow & \mathcal{H}^{\infty }_{L(s)}(\mathfrak{l}(s) ,Y)\\
[\omega ]\otimes \varphi \mapsto &[X\mapsto \omega \varphi (X)]
\end{align*}
We denote by $\chi$ the character morphism which associates to a representation its character. The following diagram is commutative:
$$
\xymatrix{ 
\k_{\mathrm{L}(s)}(W) \ar[r]^{\Ch_s} \ar[d] & \mathcal{H}^{\infty }_{L(s)}(\mathfrak{l}(s), W) \\
\k(W) \otimes R(L(s)) \ar[r]_{\Ch \otimes \chi(se^{\bullet})} &H(W,\mathbb{C}) \otimes C^{\infty }(\mathfrak{l}(s))^{L(s)}. \ar[u]
}
$$

\end{lem}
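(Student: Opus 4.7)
The strategy is to reduce to a tensor-product computation. I would first verify that both vertical arrows are well-defined isomorphisms under the hypothesis that $L(s)$ acts trivially on $W$. Any $L(s)$-equivariant bundle on $W$ then decomposes isotypically as $\bigoplus_{V\in\widehat{L(s)}} \mathcal{E}_V\otimes V$ with $\mathcal{E}_V = \Hom_{L(s)}(V,\mathcal{E})$ carrying the trivial action, giving the left isomorphism $\k(W)\otimes R(L(s))\cong \k_{L(s)}(W)$. For the right arrow, triviality of the action on $W$ makes $L(s)$ act trivially on $\mathcal{A}(W)$, so
\[
\mathcal{A}^{\infty}_{L(s)}(\mathfrak{l}(s),W) = C^{\infty}(\mathfrak{l}(s))^{L(s)}\otimes \mathcal{A}(W),
\]
and $X^*_W=0$ for every $X\in\mathfrak{l}(s)$ reduces $d_\mathfrak{l}$ to $1\otimes d$; passing to cohomology yields $C^{\infty}(\mathfrak{l}(s))^{L(s)}\otimes H(W,\mathbb{C})$.

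By additivity and $R(L(s))$-linearity, it then suffices to check commutativity on a generator $[\sigma]\otimes[V]$, using the tensor-product super-connection $\mathbb{A}\otimes\mathrm{id}_V$ on $E\otimes V$, where $\mathbb{A}$ is a super-connection on the non-equivariant graded bundle $E=E^+\oplus E^-$ whose Chern character represents $\Ch[\sigma]$, and $V$ carries its canonical flat connection as a trivial bundle. Because $L(s)$ acts trivially on both $W$ and $E$, the contraction $\iota(X^*_W)$ vanishes identically and $\mathcal{L}^{E\otimes V}(X)=\mathrm{id}_E\otimes \mu_V(X)$, where $\mu_V$ is the infinitesimal representation on $V$. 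The equivariant curvature therefore splits as
\[
F_{\mathfrak{l}}(X) = F_\mathbb{A}\otimes \mathrm{id}_V + \mathrm{id}_E\otimes \mu_V(X),
\]
and the two summands commute since they act on disjoint tensor factors. Using $s^{E\otimes V}=\mathrm{id}_E\otimes s^V$ together with the factorization $\Str_{E\otimes V}(A\otimes B)=\Str_E(A)\cdot\tr_V(B)$, one obtains
\[
\Ch_s(\mathbb{A}\otimes\mathrm{id}_V)(X) = \Str_E\bigl(e^{-F_\mathbb{A}}\bigr)\cdot \tr_V\bigl(s^V e^{-\mu_V(X)}\bigr) = \Ch(\mathbb{A})\cdot \chi_V(se^X),
\]
which is precisely the image of $[\sigma]\otimes[V]$ along the bottom and then right edges of the diagram.

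The only subtleties are of a bookkeeping nature rather than genuine obstacles. One must invoke the independence of the Chern character on the choice of super-connection to identify $\mathbb{A}\otimes\mathrm{id}_V$ with a super-connection representing $[\sigma\otimes\mathrm{id}_V]$ in $\k_{L(s)}(W)$, and verify that the sign conventions in the paper's definition of $\Ch_s$ (namely the minus sign appearing in $e^{-F_{\mathfrak{l}}(X)}$) match the stated evaluation $\chi_V(se^X)$; this amounts to a consistent choice between $X$ and $-X$ in the exponential map $\mathfrak{l}(s)\to L(s)$.
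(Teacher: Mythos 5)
Your core computation is exactly the paper's: the equivariant curvature of the product super-connection splits as $F\otimes 1 + 1\otimes \mu_V(X)$ because $X^*_W=0$, the supertrace factorizes, and one gets $\Ch(\mathbb{A})\cdot\chi_V(se^X)$. The identification of the two vertical arrows is also fine (the paper takes these for granted, citing Berline--Vergne).

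There is, however, one missing piece. The $\Ch_s$ in this lemma is not the bare super-connection Chern character $\Str(s^Ee^{-F_{\mathfrak{l}}(X)})$: in the applications ($W=T^VM^s$, $T^*M^s$, etc., which are non-compact) the classes are represented by symbols $\sigma:E^+\to E^-$, and $\Ch_s$ is the Paradan--Vergne supported Chern character of Section~\ref{section:Chern:Character}, whose representative is $c_s(\sigma,\mathbb{A},\psi)=\psi\,\Ch_s(\mathbb{A})+d\psi\,\beta_s(\sigma,\mathbb{A})$, with $\beta_s=\int_0^\infty\eta_s(\sigma,\mathbb{A},t)\,dt$ the transgression term. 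Your argument only treats the first summand. To close the gap you must also check that the transgression form factorizes, i.e.\ $\eta_s(\sigma\otimes\mathrm{id}_V,\nabla,t)(X)=\eta(\sigma,\nabla^E,t)\,\chi_V(se^X)$; this follows from the same curvature splitting together with $v_{\sigma\otimes\mathrm{id}_V}=v_\sigma\otimes 1$ and the factorization of the supertrace, and then $c_s(\sigma\otimes\mathrm{id}_V,\nabla,\psi)=c(\sigma,\nabla^E,\psi)\otimes\chi_V(se^\bullet)$, which is what the paper actually verifies. The step is routine once your curvature computation is in place, but without it the diagram is only shown to commute for the undeformed Chern character, not for the one used in the rest of the paper. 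Your closing remark about the sign convention in $e^{-F_{\mathfrak{l}}(X)}$ versus $\chi_V(se^X)$ is a legitimate bookkeeping point and not an obstruction.
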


\begin{proof} We use the previous notations. Let $\sigma : E^+ \rightarrow E^-$ be an elliptic morphism, let $\nabla^E$ be a graded connection on $E = E^+ \oplus E^-$ and $\nabla^V = d \otimes id_V$ be the trivial connexion on $Y \times V$. Denote by $\nabla$ the product connection on $E \otimes V$. Then the associated equivariant curvature to $\sigma \otimes  id_V$ is $F(\sigma \otimes id_V ,\nabla, t)(X)=F(\sigma \otimes id_V ,\nabla ,t) + \mu^{\nabla}(X)$ on $Y$ is equal to $F(\sigma ,\nabla^E ,t) \otimes 1 + 1\otimes X$ because $\mu^{\nabla}(X)=1 \otimes X$ since $X^*_W=0$. So we get $\Ch_s(\nabla)(X)= \Ch(\nabla^E)\chi(se^X)$ and $\eta_s(\sigma \otimes id_V,\nabla ,t)(X)=\eta(\sigma , \nabla^E ,t)\chi(se^X)$. And then if $\psi$ is a smooth invariant function on $Y$, we obtain
$$c_s(\sigma \otimes id_V, \nabla , \psi)=\psi \Ch(\nabla^E)\otimes \chi(se^\bullet) + d\psi \beta (\sigma ,\nabla^E)\otimes \chi(se^\bullet) = c(\sigma , \nabla^E,\psi )\otimes \chi(se^\bullet).$$

\end{proof}

\subsection{The index class of a family of $G$-transversally elliptic operators}

Here we recall the definition of the index class of a family of $G$-transversally elliptic operators \cite{baldare:KK}. Denote by $T_GM=\{\alpha \in T^*M | \alpha (X^*_M)=0,\forall X\in\g\}$. Let $T^V_GM$ denote the space $T^VM\cap T_GM$. Recall that a family $P=(P_b)_{b\in B}$ of $G$-transversally elliptic pseudodifferential operators is a family of pseudodifferential operators such that its principal symbol $\sigma(P)$ is invertible on $T^V_GM\setminus M$. 
We fix from now on a $G$-invariant continuous family of Borel measures $(\mu_b)$ which are in the Lebesgue class, constructed using a partition of unity of $B$. So, for any $f\in C(M)$, the map $b\mapsto \int_{M_b}f(m)d\mu_b(m)$ is continuous, and each measure $\mu_b$ is fully supported in the fiber $M_b$.
Since  $E$ is equipped with a hermitian structure, the $C(B)$-modules $C^{\infty ,0 }(M,E^\pm)$ of continuous fiberwise smooth sections over $M$, are naturally equipped with the structure of  pre-Hilbert right $G$-equivariant $C(B)$-modules with the inner product given by:
$$
\langle s,s'\rangle (b) =\int_{M_b}\langle s(m),s'(m)\rangle_{E^{\pm}_m}d\mu_b(m), \quad \text{ for } s, s'\in C^{\infty ,0 }(M,E^\pm).
$$
We denote by $\mathcal{E}^{\pm}$ the completion of $C^{\infty , 0}(M,E^\pm)$. So, $ \mathcal{E} = \mathcal{E}^+\oplus  \mathcal{E}^-$ is our $G$-equivariant $\mathbb{Z}_2$-graded Hilbert module on $C(B)$.
We denote by $C^*G$ the $C^*$-algebra associated with $G$.
Let $\pi : C^*G \rightarrow \mathcal{L}_{C(B)}(\mathcal{E})$ be the $\ast$-representation given by 
$$\pi(\varphi)s=\int_G\varphi(g)(g\cdot s)dg,~\forall \varphi \in L^1(G)~\mathrm{and}~ s\in C^{\infty ,0}(M,E).$$
If $P_0 : C^{\infty ,0}(M,E^+ )\rightarrow C^{\infty ,0}(M,E^-)$ is a family of pseudodifferential operators of order $0$, we denote by $P$ the family $\begin{pmatrix}
0&P_0^*\\
P_0&0
\end{pmatrix}$.\\
Let $H$ be a compact Lie group. Assume that $H$ acts on $M$ and that $E$ and $P$ are also $H$-equivariant. 
\begin{defi}\cite{baldare:KK}
The index class $\ind (P_0)$ of a $G\times H$-invariant family $P_0$ of $G$-transversally elliptic operators is defined by:
$$\mathrm{Ind^{M|B}_H} (P_0)=[\mathcal{E},\pi,P] \in \k\Khh(C^*G,C(B)).$$
If $H$ is the trivial group then we simply denote by $\ind(P_0)$ the index class which leaves in $\k\k(C^*G,C(B)).$  
\end{defi}

Let us recall the definition of $\k$-multiplicity of an irreductible unitary representation of $G$. Denote by $\hat{G}$ the space of isomorphism classes of unitary irreducible representations of $G$. 
\begin{defi}\cite{baldare:KK}
The $\k$-multiplicity $m_P(V)$ of a irreducible unitary representation $V$ of $G$ in the index class  $\ind (P_0)$ is  the image of the class 
$[(\mathcal{E}_V^G,P_V^G)]\in \k\k(\mathbb{C},C(B))$ under the isomorphism $\k\k(\mathbb{C},C(B))\cong \k(B)$. So $m_P(V)$ is the class of a virtual vector bundle over $B$, an element of the topological $\k$-theory group $\k(B)$. The class $[(\mathcal{E}_V^G,P_V^G)]$ coincides (as expected) with the Kasparov product
$$
[V]\underset{C^*G}{ \otimes}\ind(P_0)\in \k\k(\mathbb{C},C(B)),
$$
\end{defi}

As we have $\k\k(C^*G,C(B))\cong\mathrm{Hom}(R(G),\k(B))$ (see for instance \cite{rosenberg1987kunneth}), we get:

\begin{prop}
The index class of a $G$-invariant family $P_0$ of $G$-transversally elliptic operators is totally determined by its multiplicities and we have:
$$\mathrm{Ind^{M|B}}(P_0)=\sum\limits_{V\in \hat{G}}m_P(V)\chi_V.$$ 

\end{prop}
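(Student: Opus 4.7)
The statement is essentially a linear-algebraic consequence of the identification $\k\k(C^*G,C(B))\cong \Hom(R(G),\k(B))$ recalled just before the proposition, together with the defining property of the multiplicities $m_P(V)$. The plan is first to make precise how this Hom identification converts the Kasparov index class into a ``character decomposition'', and then to match the two sides of the claimed equality on a basis of $R(G)$.

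Concretely, I would proceed in three steps. First, recall from \cite{rosenberg1987kunneth} that the isomorphism
$$\Phi:\k\k(C^*G,C(B))\xrightarrow{\ \simeq \ }\Hom(R(G),\k(B))$$
is given by the Kasparov product $\Phi(z)([V])=[V]\otimes_{C^*G}z$, where we use $R(G)\cong \k\k(\C,C^*G)$ via $V\mapsto [V]$ and $\k(B)\cong \k\k(\C,C(B))$. Applied to the index class, the very definition of $\k$-multiplicity gives
$$\Phi(\ind(P_0))([V])\;=\;[V]\otimes_{C^*G}\ind(P_0)\;=\;m_P(V)\quad\text{in }\k(B),$$
for every $V\in\hat G$. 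Since $R(G)$ is the free abelian group on $\hat G$, a homomorphism $R(G)\to\k(B)$ is uniquely determined by its values on this basis, so $\ind(P_0)$ is already totally determined by the family $(m_P(V))_{V\in\hat G}$; this proves the first assertion.

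Second, to write the formula $\ind^{M|B}(P_0)=\sum_V m_P(V)\chi_V$ one interprets $\chi_V\in \k\k(C^*G,C(B))$ as the Kasparov element characterized by $\Phi(\chi_V)([W])=\delta_{V,W}\cdot [\underline{\C}_B]$, where $[\underline{\C}_B]=1\in\k(B)$ is the class of the trivial line bundle (equivalently, $\chi_V$ is the delta basis dual to $[V]$ under $\Phi$). The $\k(B)$-module structure on $\k\k(C^*G,C(B))$ coming from the external product with $\k\k(\C,C(B))\cong \k(B)$ then lets one form $m_P(V)\chi_V\in \k\k(C^*G,C(B))$, whose image under $\Phi$ sends $[W]$ to $\delta_{V,W}\,m_P(V)$. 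Summing over $V$ gives a well-defined element of $\Hom(R(G),\k(B))=\prod_V\k(B)$ (no convergence issue is involved: the sum is evaluated one basis vector at a time, where only one term is nonzero), and its value at $[V]$ is precisely $m_P(V)$.

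Third, comparing the two homomorphisms term by term on the basis $\{[V]\}_{V\in\hat G}$ yields
$$\Phi\Bigl(\,\sum_{V\in\hat G}m_P(V)\chi_V\Bigr)=\Phi(\ind(P_0))\quad\text{in }\Hom(R(G),\k(B)),$$
so by injectivity of $\Phi$ the announced equality holds in $\k\k(C^*G,C(B))$. There is no real technical obstacle here; the only point that requires care is the interpretation of the possibly infinite sum, which is settled by working in $\Hom(R(G),\k(B))\cong\prod_V\k(B)$ rather than in any completed topological sense, so that $\sum_V m_P(V)\chi_V$ is by definition the homomorphism $[W]\mapsto m_P(W)$.
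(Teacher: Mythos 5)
Your proposal is correct and follows essentially the same route as the paper: both rest on the UCT isomorphism $\k\k(C^*G,C(B))\cong\Hom(R(G),\k(B))$ (valid because $C^*G$ is in the bootstrap class and $R(G)\cong\bigoplus_{V\in\hat G}\mathbb{Z}$ is free), together with the fact that, by definition, the image of $\ind(P_0)$ under this isomorphism sends $[V]$ to $m_P(V)$. The paper's proof is a one-liner invoking the UCT; you merely spell out the details, in particular the (correct) reading of the formal sum $\sum_V m_P(V)\chi_V$ as an element of $\prod_{V\in\hat G}\k(B)$.
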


\begin{proof}
We simply apply the Universal Coefficient Theorem in bivariant $\k$-theory \cite{rosenberg1987kunneth}. Indeed, the $C^*$-algebra of $G$ belongs to the bootstrap category, see for example \cite{meyer2002comparisons}. Moreover, $R(G)$ is a free module which is isomorphic to $\bigoplus \limits_{V\in \hat{G}} \mathbb{Z}$.
\end{proof}

\subsection{Index theorem for $H$-equivariant elliptic families \cite{Atiyah-Singer:IV}}

\noindent Let $H$ be a compact group. Let $P_0$ be a $H$-equivariant continuous family of pseudodifferential operators of order $0$ which is elliptic along the fibers. Recall that a family $P$ is elliptic along the fibers if its principal symbol $\sigma_P$ given on each fiber by $\sigma_{P_b}$ is invertible on $T^VM^*\setminus M$.
\begin{defi}\cite{Atiyah-Singer:IV}\label{def:indice:analytique}   
The $H$-equivariant analytical index $\mathrm{Ind^{M|B}_H}(P_0)$ of $P_0$ is defined as the image in $\Khh(B)$ of the index class $[\mathcal{E},P] \in \k\Khh(\mathbb{C},C(B))$. It is the formal difference of the continuous fields of Hilbert spaces $\big(\ker((P_0)_b)\big)_{b\in B}$ and $\big(\ker((P_0^*)_b)\big)_{b\in B}$.  
\end{defi}

Here $\mathcal{E}$ and $P$ are constructed as in the previous section.
Denote by $\pi$ the projection $T^VM \rightarrow M$.
The map $p\circ \pi$ is $\Khh$-oriented
so it defines an element $p_! \in \k\Khh(C_0(T^VM),C(B))$ \cite{Connes:Skandalis:longIndThmFoliations}.

\begin{defi}\cite{Atiyah-Singer:IV}
Let $P_0$ be a $H$-equivariant elliptic family of operators on $M$ parametrized by a compact manifold $B$.
The topological index is defined by $\mathrm{Ind^{M|B}_{H,t}}(P_0)=[\sigma (P_0)] \otimes_{C_0(T^VM)} p_! \in \Khh(B)$.
\end{defi}

\noindent Let us recall the index theorem for families \cite{Atiyah-Singer:IV}.

\begin{thm}\cite{Atiyah-Singer:IV}
The analytical index $\mathrm{Ind^{M|B}_H}$ and the topological index $\mathrm{Ind^{M|B}_{H,t}}$ coincide.
\end{thm}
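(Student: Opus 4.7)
The plan is to adapt the classical Atiyah--Singer strategy \cite{Atiyah-Singer:IV} to the $KK$-theoretic framework just recalled. The essential idea is to reduce the identity $\mathrm{Ind^{M|B}_H}(P_0) = \mathrm{Ind^{M|B}_{H,t}}(P_0)$ to two prototypical situations---a fibrewise closed embedding and a projection from a trivial $H$-vector bundle over $B$---which can then be handled by the Thom isomorphism and Bott periodicity in $H$-equivariant $\k$-theory.

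First I would invoke the Mostow--Palais equivariant embedding theorem to produce an $H$-equivariant fibrewise embedding $i : M \hookrightarrow B \times V$ over $B$, with $V$ a finite-dimensional unitary $H$-representation. Then $p = q \circ i$, where $q : B \times V \to B$ is the $H$-equivariant projection, and by the functoriality of the Connes--Skandalis wrong-way element \cite{Connes:Skandalis:longIndThmFoliations} the topological index decomposes as a Kasparov product
$$p_! \;=\; i_! \, \otimes_{C_0(T^V(B\times V))} \, q_! \;\in\; \k\Khh(C_0(T^VM), C(B)),$$
where $i_!$ is the Thom-class element attached to the fibrewise normal bundle $N$ of $i$ and $q_!$ is the Bott element associated with the $H$-representation $V$.

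Next I would handle the case of $q$: an explicit computation of the family index of a fibrewise Dolbeault (equivalently, Bott) operator on $B \times V \to B$ identifies its class in $\k\Khh(\mathbb{C}, C(B))$ with the Bott generator representing $q_!$. This is a parametrised version of Bott periodicity in $H$-equivariant $\k$-theory and is essentially formal once the right Hilbert-module setup is in place.

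The main obstacle, as in the classical case, is the embedding step: one has to show that for any symbol $\sigma(P_0)$ on $T^VM$, the Kasparov product $[\sigma(P_0)] \otimes_{C_0(T^VM)} i_!$ coincides with the analytical index class of a family on $B \times V$ whose symbol is the equivariant Thom pushforward of $\sigma(P_0)$ along $N$. Equivalently, multiplication by the Thom class on the symbol side must be realised, on the operator side, as the external product of $P_0$ with a fibrewise Dolbeault family along the normal directions of $N$, and this operation must preserve the analytical index class in $\k\Khh(\mathbb{C}, C(B))$. I would establish this multiplicativity by the standard adiabatic deformation argument in the Hilbert-module picture, combining an adiabatic rescaling of the Dolbeault factor with the homotopy invariance of the Kasparov bifunctor; this is the direct parametrised analogue of the multiplicativity used in Atiyah--Singer~I. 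Combining this embedding step with the projection step then yields $\mathrm{Ind^{M|B}_H}(P_0) = [\sigma(P_0)] \otimes_{C_0(T^VM)} p_! = \mathrm{Ind^{M|B}_{H,t}}(P_0)$, as required.
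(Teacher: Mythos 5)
The paper offers no proof of this statement: it is quoted from \cite{Atiyah-Singer:IV} and used as a black box, so there is no internal argument to compare yours against. Your outline reproduces the standard Atiyah--Singer strategy --- fibrewise Mostow--Palais embedding $i : M \hookrightarrow B\times V$, factorisation $p_! = i_! \otimes_{C_0(T^V(B\times V))} q_!$ via Connes--Skandalis functoriality, Bott periodicity for the projection $q$, and multiplicativity for the embedding step --- and it is correct at the level of a proof sketch. Two ingredients should be made explicit before this could count as a proof. First, before any of the reductions make sense you need that the analytical index of Definition \ref{def:indice:analytique} descends to a well-defined homomorphism $\Khh(T^VM)\to\Khh(B)$, i.e.\ that the class $[\mathcal{E},P]$ depends only on the homotopy class of the principal symbol and is stable under lower-order perturbations; in the Hilbert-module picture this is standard but it is a separate lemma, not a formality. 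Second, your embedding step conflates multiplicativity with excision: the Thom pushforward of $\sigma(P_0)$ is naturally a symbol on a tubular neighbourhood of $M$ in $B\times V$, invertible outside a fibrewise-compact set, and one must know that the analytical index of the extended family on all of $B\times V$ agrees with the index computed on the tubular neighbourhood. Excision is an independent axiom in \cite{Atiyah-Singer:IV} (and in Atiyah--Singer I), proved by a separate argument, and the adiabatic deformation you invoke only handles the multiplicativity half. With those two points supplied, your reduction to the two prototypes is exactly the argument of \cite{Atiyah-Singer:IV}.
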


\begin{remarque}
Our notation for the index class of a $G\times H$-family of $G$-transversally elliptic operators is consistent with the equivariant case because if $G$ is the trivial group then the family is $H$-equivariant and elliptic and then its index $\mathrm{Ind^{M|B}_H}(P_0)$ lives in $\k\Khh(\mathbb{C},C(B))$.
\end{remarque}

\section{The Chern character of the index class }\label{Chern:caracter:index:class}\label{chapitre 2}
We want to obtain cohomological formulas for the index of a $G$-invariant family of $G$-transversally elliptic operators.
We will use the bivariant local cyclic homology \cite{puschnigg2003diffeotopy}.

\subsection{The Chern character in bivariant local cyclic homology}
\noindent
Recall the following theorem from \cite{meyer2002comparisons}.

\begin{thm}[Universal Coefficient Theorem]\label{thm:UTC:HL}\cite{meyer2002comparisons}
Let $A$ and $B$ be separable $C^*$-algebras $\k\k$-equivalent to commutative separable $C^*$-algebras. Then there is a natural isomorphism 
$$
\mathrm{HL}(A,B)\simeq \mathrm{Hom}(\k_*(A)\otimes_\mathbb{Z} \mathbb{C},\k_*(B)\otimes_\mathbb{Z} \mathbb{C})
$$
of graded vector spaces.
\end{thm}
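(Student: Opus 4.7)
The plan is to reduce to the commutative case using the bootstrap assumption together with functoriality of local cyclic homology under $\k\k$-equivalences, then compute both sides explicitly on commutative building blocks, and finally invoke the Rosenberg--Schochet Universal Coefficient Theorem for $\k\k$ to identify the resulting vector spaces.

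First I would recall Puschnigg's construction of the bivariant Chern character $\ch : \k\k(A,B) \to \mathrm{HL}(A,B)$ and verify its two main formal properties: (i) compatibility with Kasparov products (so that $\ch$ is a functor from the $\k\k$-category to the $\mathrm{HL}$-category), and (ii) $\mathrm{HL}$ is a half-exact, stable, homotopy invariant bifunctor on separable $C^*$-algebras. Property (i) implies that if $A'$ is $\k\k$-equivalent to $A$ then the invertible element in $\k\k(A,A')$ is sent to an invertible element in $\mathrm{HL}(A,A')$, so $\mathrm{HL}(A,B) \cong \mathrm{HL}(A',B')$ whenever $A \sim_{\k\k} A'$ and $B \sim_{\k\k} B'$. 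Thus, under the hypothesis of the theorem, we may replace $A$ and $B$ by commutative separable $C^*$-algebras without loss of generality.

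Second, I would carry out the computation of $\mathrm{HL}(C_0(X),C_0(Y))$ for nicely behaved locally compact spaces $X,Y$ (for example, second countable, and eventually built from spheres by mapping cones). The key input here is the HKR-type identification of $\mathrm{HL}(\mathbb{C},\mathbb{C})$ with $\mathbb{C}$ in degree $0$, together with stability and suspension: $\mathrm{HL}(\mathbb{C},C_0(\mathbb{R}^n))$ is one-dimensional in degree $n \bmod 2$. Combined with the Puschnigg excision theorem and the Milnor $\varprojlim^1$ sequence for $\mathrm{HL}$, this yields for commutative $A,B$ in the bootstrap class a natural short exact sequence analogous to Rosenberg--Schochet whose extension term vanishes because $\k_*(A) \otimes_\mathbb{Z} \mathbb{C}$ and $\k_*(B) \otimes_\mathbb{Z} \mathbb{C}$ are divisible, whence $\mathrm{Ext}^1_\mathbb{Z}(\k_*(A)\otimes\mathbb{C},\k_*(B)\otimes\mathbb{C})=0$. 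The remaining term is exactly $\Hom(\k_*(A)\otimes_\mathbb{Z}\mathbb{C},\k_*(B)\otimes_\mathbb{Z}\mathbb{C})$, and the Chern character $\ch$ delivers the natural map realizing the isomorphism.

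Third, to close the argument I would verify that the isomorphism is independent of the chosen $\k\k$-equivalences $A \sim A'$, $B \sim B'$ and hence natural: this follows from step one, since any two $\k\k$-equivalences differ by an automorphism in $\k\k$, which is sent by $\ch$ to a compatible automorphism of the Hom-group via its action on $\k_*\otimes\mathbb{C}$. The main obstacle is the second step, specifically establishing the vanishing of the $\mathrm{Ext}^1$-term and the convergence of the Milnor sequence in local (as opposed to analytic or periodic) cyclic theory; this is the technical heart of Puschnigg's and Meyer's work, and depends crucially on the ``local'' refinement of the cyclic bicomplex which ensures that inductive and projective limits behave well enough for the divisibility of $\k_*\otimes_\mathbb{Z}\mathbb{C}$ to force the extension class to vanish.
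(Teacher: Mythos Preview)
The paper does not prove this theorem: it is quoted verbatim from \cite{meyer2002comparisons} and used as a black box input to the rest of Section~\ref{Chern:caracter:index:class}. There is therefore no proof in the paper to which your proposal can be compared.

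That said, your outline is broadly in the right spirit for how such a result is established in the literature. The reduction via the multiplicativity of Puschnigg's bivariant Chern character to the case of commutative (or bootstrap) algebras is correct, and the vanishing of the $\mathrm{Ext}$-term over $\mathbb{C}$ is indeed what collapses the UCT short exact sequence to an isomorphism. Where your sketch is thinnest is precisely where you flag it: the ``second step'' relies on deep analytic properties of local cyclic homology (excision, compatibility with inductive limits, the comparison with periodic cyclic theory on smooth subalgebras) that are the substance of Puschnigg's and Meyer's papers and cannot be reproduced in a few lines. In particular, your invocation of a Milnor $\varprojlim^1$ sequence and ``building from spheres'' is heuristic rather than a proof; the actual argument in \cite{meyer2002comparisons} proceeds by showing directly that the Chern character $\k\k_*(A,B)\otimes_\mathbb{Z}\mathbb{C}\to\mathrm{HL}_*(A,B)$ is an isomorphism for $A,B$ in the bootstrap class, and then combining this with the Rosenberg--Schochet UCT for $\k\k$. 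If you wish to include a proof here rather than a citation, that is the cleaner route to spell out.
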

As the $C^*$-algebra $C^*G$ of a compact Lie group $G$ is in the bootstrap category \cite{meyer2002comparisons}, we can apply the universal coefficients theorem in bivariant local cyclic homology. 
Recall that 
$$\mathrm{HL}_*(C^*G) \simeq \left\{\begin{array}{lll} R(G)\otimes_\mathbb{Z} \mathbb{C},& *=0\\
0,& *=1
\end{array}\right.\ \cite{meyer2002comparisons}$$
and $$\mathrm{HL}_*(C(B))\simeq \bigoplus \limits_n H^{*+2n}(B,\mathbb{C}) \ \cite{puschnigg2003diffeotopy}.$$

So we obtain:
\begin{prop}\label{cor:UTC:HL:C^*G:C(B)}
The bivariant local cyclic homology $\mathrm{HL}(C^*G,C(B))$ of the couple $(C^*G,C(B))$ is isomorphic to $\mathrm{Hom}(\mathrm{HL}(C^*G),\mathrm{HL}(C(B))$.
\end{prop}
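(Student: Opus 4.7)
The proposition is stated as a \emph{corollary} of the universal coefficient theorem recalled as Theorem 3.1, so my plan is simply to verify that the pair $(C^*G,C(B))$ satisfies the hypotheses of that theorem and then translate the right-hand side into the form stated.

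First I would verify the hypotheses of Theorem 3.1. The algebra $C(B)$ is commutative and separable because $B$ is a compact manifold, so there is nothing to check on that side. For $C^*G$ with $G$ a compact Lie group, the Peter--Weyl theorem gives a canonical $\ast$-isomorphism
$$C^*G \;\simeq\; \bigoplus_{V\in\hat G}\operatorname{End}(V),$$
where $\hat G$ is countable. This algebra is separable, and it is $\k\k$-equivalent to the commutative separable $C^*$-algebra $c_0(\hat G)$ via the obvious Morita equivalences of the matrix summands; equivalently, as noted in the proof of the preceding proposition, $C^*G$ belongs to the bootstrap category, and in that class every algebra is $\k\k$-equivalent to a commutative separable $C^*$-algebra.

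Next I would apply Theorem 3.1 to the pair $(C^*G,C(B))$, which yields a natural isomorphism
$$\mathrm{HL}(C^*G,C(B))\;\simeq\; \mathrm{Hom}\bigl(\k_*(C^*G)\otimes_{\mathbb Z}\mathbb C,\ \k_*(C(B))\otimes_{\mathbb Z}\mathbb C\bigr)$$
of graded vector spaces. It then remains to match the two factors on the right with the bivariant local cyclic homology groups appearing in the statement. On the source side, $\k_0(C^*G)\simeq R(G)$ and $\k_1(C^*G)=0$, so $\k_*(C^*G)\otimes_{\mathbb Z}\mathbb C\simeq R(G)\otimes_{\mathbb Z}\mathbb C\simeq \mathrm{HL}_*(C^*G)$ by the displayed computation of $\mathrm{HL}_*(C^*G)$. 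On the target side, the classical Chern character identifies $\k^*(B)\otimes_{\mathbb Z}\mathbb C$ with $\bigoplus_n H^{*+2n}(B,\mathbb C)$, which is precisely $\mathrm{HL}_*(C(B))$ by the formula of \cite{puschnigg2003diffeotopy}. Assembling these two identifications inside $\mathrm{Hom}(-,-)$ gives the claimed isomorphism.

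There is no real obstacle in this proof: both pieces are already stated in the excerpt, and the only thing to check carefully is the naturality of the identifications $\k_*(A)\otimes\mathbb C\simeq \mathrm{HL}_*(A)$ in the bootstrap category, which is the standard consequence of Theorem 3.1 applied with one variable equal to $\mathbb C$. Hence the proposition follows as a direct corollary.
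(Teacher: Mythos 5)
Your proposal is correct and follows essentially the same route as the paper: the paper simply invokes the universal coefficient theorem for $\mathrm{HL}$ (using that $C^*G$ lies in the bootstrap category) together with the displayed computations of $\mathrm{HL}_*(C^*G)$ and $\mathrm{HL}_*(C(B))$, exactly as you do. The extra details you supply (Peter--Weyl decomposition and the Morita equivalence with $c_0(\hat G)$) are a harmless elaboration of the same argument.
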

\begin{prop}
The Chern character in bivariant local cyclic homology of the index class $\ind(P_0)$ of a $G$-invariant family $P_0$ of $G$-transversally elliptic operators is given by:
$$\Ch (\ind(P_0))=\sum \limits_{V \in \hat{G}} \Ch(m_P(V))\chi_V,$$
where $\Ch(m_p(V)) \in H(B,\mathbb{C})$ is the usual Chern character of $m_P(V)\in \k(B)$.
\end{prop}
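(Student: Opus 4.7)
The plan is to exploit the Universal Coefficient Theorem in bivariant local cyclic homology (Theorem~\ref{thm:UTC:HL} and Proposition~\ref{cor:UTC:HL:C^*G:C(B)}) together with the naturality of the Chern character with respect to Kasparov products. By the bootstrap hypothesis both $C^*G$ and $C(B)$ are $\k\k$-equivalent to commutative separable $C^*$-algebras, so the bivariant Chern character
$$\Ch : \k\k(C^*G,C(B)) \longrightarrow \mathrm{HL}(C^*G,C(B)) \simeq \mathrm{Hom}(R(G)\otimes_\Z \C, \oplus_n H^{*+2n}(B,\C))$$
is a natural transformation.

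First I would use the decomposition of the index class by its $\k$-multiplicities. Recall that $R(G)=\bigoplus_{V\in \hat{G}}\Z\chi_V$ is free and the index class is determined by its values on characters, namely
$$[V]\underset{C^*G}{\otimes}\ind(P_0) = m_P(V) \in \k\k(\C,C(B))\cong \k(B).$$
Under the UCT, the class $\ind(P_0)$ corresponds precisely to the homomorphism $R(G)\to \k(B)$ sending $\chi_V\mapsto m_P(V)$.

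Next I would invoke the compatibility of the Chern character with Kasparov products: for any $[V]\in \k\k(\C,C^*G)\cong R(G)$ we have
$$\Ch\bigl([V]\underset{C^*G}{\otimes}\ind(P_0)\bigr)=\Ch([V])\underset{\mathrm{HL}(C^*G)}{\otimes}\Ch(\ind(P_0)).$$
On the left-hand side, the Chern character on $\k\k(\C,C(B))\cong \k(B)$ reduces to the classical Chern character $\Ch : \k(B)\to H(B,\C)$, so the left-hand side is $\Ch(m_P(V))$. On the right-hand side, $\Ch([V])$ corresponds via the identification $\mathrm{HL}(C^*G)\cong R(G)\otimes \C$ to the element $\chi_V\otimes 1$. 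Under the Hom description of $\mathrm{HL}(C^*G,C(B))$, this says that the homomorphism $\Ch(\ind(P_0)) : R(G)\otimes \C \to H(B,\C)$ sends $\chi_V$ to $\Ch(m_P(V))$, which is exactly the announced formula
$$\Ch(\ind(P_0))=\sum_{V\in \hat{G}}\Ch(m_P(V))\,\chi_V.$$

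The main subtlety, and the step I would have to check most carefully, is the naturality statement: that the bivariant local cyclic Chern character of Puschnigg is multiplicative with respect to Kasparov products and that, under the UCT isomorphisms, the induced map on $\mathrm{HL}(C^*G)$ is the obvious one sending $\chi_V$ to itself in $R(G)\otimes \C$. Both facts are built into Puschnigg's construction, but identifying the generators on each side requires unwinding the UCT isomorphisms of \cite{meyer2002comparisons}; once this bookkeeping is in place the formula follows immediately.
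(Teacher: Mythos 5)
Your argument is correct and follows essentially the same route as the paper: the Universal Coefficient Theorem (Proposition \ref{cor:UTC:HL:C^*G:C(B)}) reduces the identification of $\Ch(\ind(P_0))$ to its values on irreducible representations, and the multiplicativity of Puschnigg's bivariant Chern character with respect to the Kasparov product gives $\Ch(\ind(P_0))\circ\Ch(V)=\Ch([V]\otimes_{C^*G}\ind(P_0))=\Ch(m_P(V))$. The only difference is that you spell out the identification of $\Ch([V])$ with $\chi_V\otimes 1$ under $\mathrm{HL}(C^*G)\cong R(G)\otimes\C$, a bookkeeping point the paper leaves implicit.
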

\begin{proof}
By Proposition \ref{cor:UTC:HL:C^*G:C(B)}, we have $\mathrm{HL}(C^*G,C(B))\simeq\mathrm{Hom}(\mathrm{HL}(C^*G),\mathrm{HL}(C(B))$. So we deduce that $\Ch(\ind(P_0))$ is totally determined by its values on the irreductible representations. We know that the Chern character in bivariant local cyclic homology is compatible with the Kasparov's intersection product \cite{puschnigg2003diffeotopy}. Thus we have
$$\Ch(\ind(P_0))\circ\Ch(V)=\Ch([V]\otimes_{C*G}\ind(P_0))=\Ch(m_P(V)).$$
\end{proof}

\subsection{Distributional index with value in $H(B,\mathbb{C})$}
Following \cite{atiyah1974elliptic}, we want to obtain a distributional Chern character. More precisely, if $\varphi \in C^{\infty}(G) $ we want to know if the series $\sum \limits_{V\in \hat{G}}\Ch(m_P(V))<\chi_V,\varphi>_{L^2(G)}$ is convergent. Here the series has values in $ H(B,\mathbb{C}) $ while in \cite{atiyah1974elliptic} the series has values in $ \mathbb{C}$.
We show that the Chern character of the index class converges as a distribution on $C^{\infty}(G)$ with value in the finite dimensional cohomology of $B$. 
\begin{thm}\cite{baldare:KK}\label{thm:couplage:ind}
The pairing with values in $\k\k(C^*G,\mathbb{C})$ of the index class of a $G$-invariant family of pseudodifferential operator $P_0$ of order $0$ which is $G$-transversally elliptic along the fibers, with an element of the $\k$-homology group $\k\k(C(B),\mathbb{C})$ is given by the index class of a $G$-invariant pseudodifferential operator which is $G$-transversally elliptic on the ambiant manifold $M$.  
\end{thm}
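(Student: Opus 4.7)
The plan is to represent an arbitrary element of $\k\k(C(B),\mathbb{C})$ by an elliptic pseudodifferential operator on $B$ and then to realize the Kasparov product explicitly as the index class of an operator on $M$ obtained by combining $P_0$ with a horizontal lift of this operator. Since $B$ is a compact manifold, every class in $\k\k(C(B),\mathbb{C})=\oK^0(B)$ is represented by an unbounded Kasparov module $(L^2(B,F),Q)$ where $Q$ is an elliptic pseudodifferential operator of order one on $B$ acting on a $\Z_2$-graded Hermitian bundle $F\to B$. We may assume $Q$ is self-adjoint and odd. The index class of $P_0$ is represented by the unbounded (or bounded-transform) Kasparov module $(\mathcal{E},\pi,P)$ where $\mathcal{E}$ is the $G$-equivariant Hilbert $C(B)$-module of continuous families of $L^2$-sections of $E$.

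Next I would construct a candidate operator on $M$ representing the Kasparov product. Fix a $G$-invariant horizontal distribution $T^HM\subset TM$ complementary to $T^VM$; because $G$ acts trivially on $B$ (in the transversally elliptic situation) or more generally for the elliptic families statement, such a distribution exists by averaging. Using the associated Ehresmann connection, I lift $Q$ to a first-order pseudodifferential operator $\widetilde{Q}$ acting on $p^*F\to M$ whose principal symbol along a horizontal cotangent vector $\xi^H\in T^{H*}M$ is $\sigma_Q(p_*\xi^H)$. Form the combined operator on $E\boxtimes p^*F$ defined schematically by
\begin{equation*}
R \;=\; P\otimes 1 \;+\; \epsilon\,(1\otimes_{\nabla}\widetilde{Q}),
\end{equation*}
with the usual $\Z_2$-graded sign conventions and a suitable $G$-invariant connection used to make the second summand well-defined. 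I would then verify that the principal symbol of $R$ at a covector $\xi=\xi^V+\xi^H\in T^*M$ restricted to $T_GM$ is invertible away from the zero section: the square $R^2$ has symbol $\sigma(P)^2\otimes 1+1\otimes \sigma(\widetilde{Q})^2$ on the diagonal, and transversal ellipticity of $P$ in the vertical direction together with ellipticity of $\widetilde{Q}$ in the horizontal direction forces invertibility on $T^*_GM\setminus M$.

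Finally I would appeal to Hilsum's unbounded Kasparov product formalism from \cite{hilsum1987morphismes}, which is the tool explicitly invoked in the introduction. Hilsum's theorem provides sufficient conditions (connection condition, positivity/boundedness of the commutator) under which a pair $(P,\widetilde{Q})$ of the type above represents the Kasparov product of the classes of $P$ and $Q$. Checking these conditions is essentially a local computation on $M$ using the horizontal/vertical splitting, and the choices of $G$-invariant connections ensure $G$-equivariance throughout. Once this is granted, the bounded transform of $R$ is a $G$-transversally elliptic pseudodifferential operator on $M$ whose index class in $\k\k(C^*G,\mathbb{C})$ equals $\ind(P_0)\otimes_{C(B)}[Q]$, which is the desired statement.

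\textbf{Expected obstacle.} The main difficulty lies not in producing $R$ nor in checking $G$-transversal ellipticity, both of which are symbolic computations, but in verifying the hypotheses of Hilsum's Kasparov-product theorem in the unbounded picture when $P$ is only \emph{transversally} elliptic rather than elliptic: the lower-order commutator terms between $P$ and $\widetilde{Q}$ must be controlled as operators on the (only partially elliptic) module $\mathcal{E}$, and the connection on $\mathcal{E}$ used to lift $Q$ must be compatible with the $G$-action and with the partial structure of $P$. This is where the results of \cite{hilsum1987morphismes} are essential and where the technical heart of the proof lies.
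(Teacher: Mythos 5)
Your proposal follows essentially the same route as the paper: Theorem \ref{thm:couplage:ind} is quoted from \cite{baldare:KK}, and the strategy described there (and summarized in the introduction of the present paper) is precisely to represent the $\oK$-homology class by an elliptic operator on $B$, lift it via a horizontal distribution, form the sharp product $\sigma\odot p^*\sigma'$ with the family symbol, and verify the Kasparov product using the unbounded-module criteria of \cite{hilsum1987morphismes}. Your identification of the delicate point — controlling the commutator terms in Hilsum's conditions when $P$ is only transversally elliptic — is exactly where the technical work of the cited reference lies.
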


\begin{cor}\label{cor:couplage:ind:Atiyah}
If $\alpha $ is an element of the $\k$-homology of $B$ then
\begin{enumerate}
\item the class $[\mathcal{E},\pi, P]\otimes_{C(B)} \alpha \in \k\k(C^*G,\mathbb{C})\simeq \mathrm{Hom}(R(G),\mathbb{C})$ is given by the distributionnal index of Atiyah \cite{atiyah1974elliptic}, i.e. the multiplicities of $ \ind(P_0)\otimes_{C(B)} \alpha $ are summable in the sens of distributions on $G$;
\item denote by $m([V]\otimes_{C*G}\ind(P_0)\otimes_{C(B)}\alpha)$ the integer associated to the multiplicity of $V$ $[V]\otimes_{C*G}\ind(P_0)\otimes_{C(B)}\alpha$ in $\ind(P_0)\otimes_{C(B)} \alpha $. For any $\varphi \in C^\infty(G)$, the series
$$\sum \limits_{V\in \hat{G}}m \big([V]\otimes_{C*G}\ind(P_0)\otimes_{C(B)}\alpha \big)<\chi_V,\varphi>_{L^2(G)},$$
is convergente in $\mathbb{C}$. 
\end{enumerate}  
\end{cor}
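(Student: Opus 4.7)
The plan is to deduce both items essentially as a translation of Theorem \ref{thm:couplage:ind} combined with Atiyah's original result on $G$-transversally elliptic operators on a closed manifold \cite{atiyah1974elliptic}.

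For item (1), I apply Theorem \ref{thm:couplage:ind} directly: the Kasparov product $[\mathcal{E},\pi,P]\otimes_{C(B)}\alpha$, viewed as an element of $\k\k(C^*G,\mathbb{C})$, is represented by the index class of a $G$-invariant pseudodifferential operator $Q$ on the ambient manifold $M$ which is $G$-transversally elliptic. Under the identification $\k\k(C^*G,\mathbb{C})\simeq \mathrm{Hom}(R(G),\mathbb{Z})$, this class is the homomorphism $[V]\mapsto m_Q(V)$ assigning to each $V\in\hat{G}$ its multiplicity in the index of $Q$. Atiyah's theorem in \cite{atiyah1974elliptic} asserts precisely that the collection $(m_Q(V))_{V\in\hat{G}}$ is the sequence of Fourier coefficients of a unique $\mathrm{Ad}(G)$-invariant distribution on $G$; in particular, these multiplicities grow at most polynomially with respect to a reasonable norm on $\hat{G}$ (for instance, the Casimir eigenvalue or the norm of the highest weight).

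For item (2), associativity of the Kasparov product yields
$$
[V]\otimes_{C^*G}\ind(P_0)\otimes_{C(B)}\alpha \;=\; [V]\otimes_{C^*G}\bigl(\ind(P_0)\otimes_{C(B)}\alpha\bigr),
$$
and under $\k\k(\mathbb{C},\mathbb{C})\simeq\mathbb{Z}$ the right-hand side is the integer $m_Q(V)$, with $Q$ as in the previous paragraph. The series in question therefore computes the pairing of the distributional index of $Q$ against the test function $\varphi$. Convergence is then standard: since $\varphi\in C^\infty(G)$, its Fourier coefficients $\langle\chi_V,\varphi\rangle_{L^2(G)}$ decay faster than any polynomial (elliptic regularity of the Laplacian on the compact Lie group $G$, via Peter--Weyl), whereas by item (1) the coefficients $m_Q(V)$ grow at most polynomially. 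The series therefore converges absolutely in $\mathbb{C}$.

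Since Theorem \ref{thm:couplage:ind} is already at our disposal, there is no serious obstacle here: the corollary is a formal consequence of identifying the Kasparov product on the base with an ambient-manifold transversally elliptic index, and then invoking \cite{atiyah1974elliptic}. The only point that requires a brief verification is the book-keeping of Kasparov associativity used to identify the multiplicity $m_Q(V)$ with the iterated product $[V]\otimes_{C^*G}\ind(P_0)\otimes_{C(B)}\alpha$, but this is immediate from the functoriality of the Kasparov product.
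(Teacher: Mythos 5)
Your proposal is correct and follows essentially the same route as the paper: apply Theorem \ref{thm:couplage:ind} to represent $\ind(P_0)\otimes_{C(B)}\alpha$ by the index class of a $G$-transversally elliptic operator $Q$ on $M$, then invoke Atiyah's result that the multiplicities $(m_Q(V))$ define a tempered distribution on $G$. The extra details you supply (polynomial growth of multiplicities versus rapid decay of the Fourier coefficients of $\varphi$, and Kasparov associativity to identify $m_Q(V)$ with the iterated product) are correct elaborations of what the paper leaves implicit.
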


\begin{proof}
Indeed, by the Theorem \ref{thm:couplage:ind} $\ind(P_0)\otimes_{C(B)}\alpha $ is represented by the index class of a $G$-transversally elliptic pseudodifferential operator on $M$. By \cite{atiyah1974elliptic}, we know that the distributional index of a $G$-transversally elliptic operator $Q$ is tempered on $G$ and is totally determined by its multiplicities, that is to say $\mathrm{Ind}^M(Q)=\displaystyle \sum\limits_{V\in \hat{G}}m_Q(V)\chi_V$ is well defined as a distribution on $G$.
\end{proof}

We now deduce from the previous discussion that the Chern character of the index class of a $G$-invariant family of $G$-transversally elliptic pseudodifferential operators is a distribution on $G$ with value in the cohomology of the base $B$. For this purpose, the bivariant multiplicative Chern character is used \cite{puschnigg2003diffeotopy}. Then we show that the formal sum $\displaystyle \sum \limits_{V\in \hat{G}}\Ch(m_P(V))\chi_V$ converges in the distributional sense with value in the finite dimensional vector space $H(B,\mathbb{C})$.

\begin{thm}\label{thm:convergence:chern:indice}Assume that $B$ is oriented. Then the Chern character, in bivariant local cyclic cohomology, of the index class of a $G$-invariant family of pseudodifferential operators which is $G$-transversally elliptic along the fibers is a distribution with value in the even de Rham cohomology of $B$. We have more precisely,
$$\Ch(\ind(P_0))=\displaystyle \sum \limits_{V\in \hat{G}}\Ch(m_P(V))\chi_V \in C^{-\infty}(G,H^{2*}(B,\mathbb{C}))^{Ad(G)}.$$ 

\end{thm}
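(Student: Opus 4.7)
The plan is to derive the convergence statement by pairing with $\k$-homology of the base $B$ and reducing to the scalar distributional index of Atiyah. The formal identity
$$\Ch(\ind(P_0))=\sum_{V\in \hat{G}}\Ch(m_P(V))\chi_V$$
already appears in the proposition immediately preceding the theorem as an equality of homomorphisms $R(G)\otimes_{\mathbb Z}\mathbb C \to H^{2*}(B,\mathbb C)$, by the Universal Coefficient Theorem in local cyclic homology (Proposition \ref{cor:UTC:HL:C^*G:C(B)}). What remains is to show that, evaluated on any test function $\varphi\in C^\infty(G)$, the formal series
$$T(\varphi):=\sum_{V\in \hat{G}}\Ch(m_P(V))\,\langle \chi_V,\varphi\rangle_{L^2(G)}$$
actually converges in $H^{2*}(B,\mathbb C)$ and depends continuously on $\varphi$, after which the $Ad(G)$-invariance is automatic since each $\chi_V$ is a central function on $G$.

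Since $B$ is a compact manifold, $H^{2*}(B,\mathbb C)$ is finite dimensional, so convergence of $T(\varphi)$ is equivalent to convergence after pairing with a basis of the dual space. Since $B$ is moreover orientable, the topological Chern character $\k_0(B)\otimes_{\mathbb Z}\mathbb C \to H_{2*}(B,\mathbb C)$ is an isomorphism, so every dual vector is of the form $\Ch(\alpha)$ for some $\alpha\in \k_0(B)$. It therefore suffices to handle $\langle T(\varphi),\Ch(\alpha)\rangle$ for each fixed $\alpha$. By multiplicativity of the Puschnigg bivariant Chern character \cite{puschnigg2003diffeotopy} and the identification of $\Ch$ on $\k\k(\mathbb C,\mathbb C)\otimes \mathbb C$ with the rank map, one obtains
$$\big\langle \Ch(m_P(V)),\Ch(\alpha)\big\rangle = \Ch\big([V]\otimes_{C^*G}\ind(P_0)\otimes_{C(B)}\alpha\big) = m\big([V]\otimes_{C^*G}\ind(P_0)\otimes_{C(B)}\alpha\big)\in \mathbb Z.$$
Corollary \ref{cor:couplage:ind:Atiyah} (item 2), combined with Theorem \ref{thm:couplage:ind}, then ensures that the integer series
$$\sum_{V\in \hat{G}} m\big([V]\otimes_{C^*G}\ind(P_0)\otimes_{C(B)}\alpha\big)\,\langle \chi_V,\varphi\rangle_{L^2(G)}$$
is the evaluation at $\varphi$ of the Atiyah distributional index of a $G$-transversally elliptic operator on $M$, hence converges and defines a tempered distribution on $G$. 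Letting $\alpha$ run over $\k_0(B)$ yields convergence of $T(\varphi)$ in $H^{2*}(B,\mathbb C)$ with continuous dependence on $\varphi$.

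The delicate step is the multiplicativity invoked above: the identification of the local cyclic pairing of $\Ch(\ind(P_0))$ with $\Ch(\alpha)$ with the scalar Chern character of the Kasparov product, and of the latter with the integer multiplicity appearing in Corollary \ref{cor:couplage:ind:Atiyah}. This requires careful use of Puschnigg's multiplicative bivariant Chern character, together with the naturality of the topological Chern character on $B$ under the standard isomorphism $\mathrm{HL}_*(C(B))\simeq \bigoplus_n H^{*+2n}(B,\mathbb C)$. Once this compatibility is spelled out, the rest of the argument is a short finite-dimensional duality reduction to the already established scalar Atiyah distributional index.
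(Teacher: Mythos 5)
Your proposal is correct and follows essentially the same route as the paper: reduce to pairings with $\k$-homology classes $\alpha$ of $B$ via the rational isomorphism between de Rham homology and $\oK$-homology, invoke multiplicativity of Puschnigg's bivariant Chern character to identify the pairing with $\Ch(\ind(P_0)\otimes_{C(B)}\alpha)$, and conclude by Theorem \ref{thm:couplage:ind} and Corollary \ref{cor:couplage:ind:Atiyah} that this is Atiyah's convergent distributional index, with the evenness coming from the Universal Coefficient Theorem in $\mathrm{HL}$. The only cosmetic difference is that you argue termwise in $V$ while the paper pairs the whole class at once (and disposes of the odd part via $\oK\oK^1(C^*G,\mathbb{C})=0$ rather than restricting to $H_{2*}$ from the outset); both are sound.
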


\begin{proof}
In order to show that $\Ch(\ind(P_0))=\sum \limits_{V\in \hat{G}}Ch(m_P(V))\chi_V$ converges in the sense of distributions in $C^{-\infty}(G,H^{2*}(B,\mathbb{C}))$, we will show that the pairing of any closed de Rham current $C$ on $B$,  with the Chern character of the index class, $\langle\Ch(\ind(P_0)),C\rangle$ is a distribution on $G$. Let $C\in H_*(B,\mathbb{C})$ be a de Rham current on $B$. As the Chern character is an isomorphism after tensoring by $\mathbb{C}$, we get that there exist $\lambda_1, \dots ,\lambda_n\in \mathbb{C}$ and $\alpha_1,\dots ,\alpha_n$ elements of $\k_*(B)$ such that $C=\sum \limits_{i=1}^{n}\lambda_i\Ch(\alpha_i)$. Then we have $\langle\Ch(\ind(P_0)),C\rangle=\sum \limits_{i=0}^n\lambda_i\langle\Ch(\ind(P_0)),\Ch(\alpha_i)\rangle$. Therefore it is sufficient to check that this pairing is a distribution for each element of the type $\Ch(\alpha)$, with $\alpha\in \k_*(B)$. Now if $\alpha\in \k_*(B)$ then we have $\langle\Ch(\ind(P_0)),\Ch(\alpha)\rangle=\Ch([\ind(P_0)]\otimes_{C(B)}\alpha)$ by multiplicativity of the Chern character in bivariant local cyclic homology. On the one hand, if $\alpha \in \k_1(B)$ then $[\ind(P_0)]\otimes_{C(B)}\alpha \in \k\k^1(C^*G,\mathbb{C})=0$ and so $<\Ch(\ind(P_0)),\Ch(\alpha)>=\Ch([\ind(P_0)]\otimes_{C(B)}\alpha)=0$. On the other hand, if $\alpha \in \k_0(B)$ then we know that by Corollary \ref{cor:couplage:ind:Atiyah} that $[\ind(P_0)]\otimes_{C(B)}\alpha=\Ch([\ind(P_0)]\otimes_{C(B)}\alpha)$ is a distribution on $G$. The Chern character is hence a distribution with values is the even de Rham cohomology because using the universal coefficient theorem in bivariant local cyclic homology, we have $\Ch(\mathcal{E},\pi,P)\in \mathrm{HL}(C^*G,C(B))\simeq \mathrm{Hom}(R(G)\otimes \mathbb{C},\k^0(B)\otimes \mathbb{C})\simeq \mathrm{Hom}(R(G)\otimes \mathbb{C},H^{2*}(B,\mathbb{C}))$.
\end{proof}

\begin{remarque}
If $B=\{\star\}$, we get the standard result (see \cite{natsume:nest}) that the Chern-Connes character of the index class of a $G$-invariant pseudodifferential operator which is $G$-transversally elliptic coincides with the distribution of Atiyah view as a trace on $C^\infty(G)^{Ad(G)}\simeq \mathrm{HP}_0(C^\infty(G))$, where $C^\infty(G)$ is viewed as convolution algebra.
\end{remarque}

\section{Berline-Vergne formula for equivariant families} \label{II}
In this section, we show a formula of the delocalized index in equivariant cohomology for $H$-invariant elliptic families. In a first paragraph, we begin by recalling the Bismut localization formula \cite{Bismut:localisation} which is a generalization of the Berline-Vergne localization formula \cite{BV:formuleloc:Kirillov,BGV}. 

\subsection{Review of the Bismut localization formula}
Assume that $M$ and $B$ are connected and oriented. Let $H$ be a compact Lie group and $\mathfrak{h}$ its Lie algebra. 
Let $X \in \mathfrak{h}$. Denote respectively by $X_M$ and $X_B$ the vector fields generated respectively by $X$ on $M$ and $B$. Let $B^X=\{b\in B/X_B(b)=0\}$ and $M^X=\{m\in M/X_M(m)=0\}$ be the submanifolds of zeros of $X_M$ and $X_B$. Since $p$ is $H$-equivariant we have $p_*X_M=X_B$. We know that the fibration $p : M \rightarrow B $ restricts in a fibration $p^X : M^X\rightarrow B^X$, with various, possibly empty, fibers over the different connected components of $B^X$, see for example \cite{Bismut:localisation} or \cite{heitsch1991} for foliation.

Choose a $H$-invariant metric $< \cdot ,\cdot >$ on $M$, i.e. 
$$\forall Y_1,~ Y_2 \in C^{\infty}(M,TM)~ <h\cdot Y_1, h\cdot Y_2>=<Y_1 , Y_2>,\quad \forall h\in H.$$
Let $T^{Hor}M$ be the orthogonal bundle of $T^VM$. Denote by $N^X_M$ the normal bundle of $M^X$ in $M$ and by $N^X_B$ the normal bundle of $B^X$ in $B$. We identify $N^X_M$ with the orthogonal of $TM^X$ in $TM$ using the riemanian metric. We can lift $N^X_B$ by $p_*$ into a subbundle of $T^{Hor}M$ on $M$. We note it $p^*N^X_B$.

\begin{thm}\cite{Bismut:localisation}\label{thm:bismut:fibréNormal}With the previous notation, we have:
\begin{enumerate}
\item The vertical normal bundle $N^X_M\cap T^VM$ of $M^X$ coincides with the normal bundle $\mathcal{N} (p^{-1}(b)\cap M^X, p^{-1}(b))$ of  $p^{-1}(b)\cap M^X$ in $ p^{-1}(b)$. 
\item The normal vertical bundle $N^X_M\cap T^VM$ of  $M^X$ coincide with the normal bundle $\mathcal{N}(M^X,p^{-1}(B^X))$ of  $M^X$ in $p^{-1 }(B^X)$.
\item Moreover, we have $N^X_M =N^X_M\cap T^VM \oplus N^X_B$.

\end{enumerate}
\end{thm}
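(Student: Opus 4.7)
The plan is to analyse $T_m M$ at a point $m \in M^X$ by combining the fibration orthogonal splitting $T_m M = T^V_m M \oplus T^{Hor}_m M$ (provided by the $H$-invariant metric) with the splitting into the fixed subspace $T_m M^X$ of the linearised flow $(e^{tX})_*$ and its orthogonal complement $N^X_M|_m$. All three items will follow from compatibility of these two orthogonal decompositions.

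The first step is to note that the $H$-equivariance of $p$ forces $(e^{tX})_*$ to preserve $T^V M = \ker p_*$, and the $H$-invariance of the metric forces $(e^{tX})_*$ to be an isometry, so it also preserves $T^{Hor} M$. Consequently both $T_m M^X$ and its orthogonal $N^X_M|_m$ split along the vertical/horizontal decomposition. Moreover $p_*$ restricts to an $(e^{tX})_*$-equivariant isometry from $T^{Hor}_m M$ onto $T_{p(m)} B$ (noting that $p(M^X)\subset B^X$, since $X_B \circ p = p_* \circ X_M$), so the fixed part of $T^{Hor}_m M$ maps isomorphically onto $T_{p(m)} B^X$ and its orthogonal onto $N^X_B|_{p(m)}$. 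This provides the identification $(N^X_M \cap T^{Hor}M)|_m \cong p^* N^X_B|_{p(m)}$, which together with the vertical/horizontal splitting of $N^X_M|_m$ yields item (3).

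For item (1), since the fibre $M_b$ has tangent space $T^V_m M$ and $T_m(M^X \cap M_b) = T^V_m M \cap T_m M^X = T^V_m M^X$, the normal bundle of $M^X \cap M_b$ in $M_b$ is the orthogonal of $T^V_m M^X$ in $T^V_m M$, which is exactly $(N^X_M \cap T^VM)|_m$. For item (2), the preimage $p^{-1}(B^X)$ has tangent space $T^V_m M \oplus p^*(T_{p(m)} B^X)$ at $m$, while $T_m M^X = T^V_m M^X \oplus p^*(T_{p(m)} B^X)$; the quotient is therefore $T^V_m M / T^V_m M^X$, which again equals $(N^X_M \cap T^VM)|_m$.

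The only subtle point is the first step: the fact that $(e^{tX})_*$ preserves the horizontal distribution at fixed points. Without the $H$-invariance of the metric, the orthogonal complement of $T^V M$ would not be $(e^{tX})_*$-stable and the splitting of $N^X_M$ into a vertical and a horizontal part would break down. Once this compatibility is established, the remainder of the proof is a routine tracking of subspaces through the two commuting orthogonal decompositions.
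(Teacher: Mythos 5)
The paper gives no proof of this theorem at all --- it is quoted verbatim from Bismut's localization paper --- so there is nothing internal to compare against; your argument is the standard one and it is correct: the $H$-invariance of the metric makes $(e^{tX})_*$ preserve both $T^VM=\ker p_*$ and its orthogonal $T^{Hor}M$ at a point of $M^X$, the fixed subspace of the resulting torus action is $T_mM^X$, and all three items then follow by intersecting the two compatible orthogonal decompositions as you do. One cosmetic point: $p_*\colon T^{Hor}_mM\to T_{p(m)}B$ is an $(e^{tX})_*$-equivariant isomorphism but not an isometry (no metric is fixed on $B$); since equivariance is all your argument actually uses, nothing breaks.
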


\begin{lem}\label{lem:orientation:fibreN}\cite{kobayashi1958}
Let $T$ be a torus. Let $V$ be a $T$-manifold. Denote by $V^T$ the set of fixed points of $V$ under the action of $T$. Then the normal bundle $N$ of $V^T$ in $V$ is oriented by a complex structure. In this case, we can define the equivariant Euler class $\Eul(N)$ of the normal bundle $N$.

\end{lem}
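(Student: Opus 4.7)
The plan is to produce the complex structure on $N$ from the infinitesimal action of a single generic element of $\mathfrak{t}=\mathrm{Lie}(T)$, and then verify smoothness and that a complex structure indeed supplies an orientation.

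First I would observe that since $T$ is a torus, its isotropy weights acting on the normal bundle $N$ form a (locally) finite set of non-zero characters of $T$. Pick a $T$-invariant Riemannian metric on $V$ (available because $T$ is compact). For each fixed point $p \in V^T$, the tangent space $T_pV$ decomposes $T$-equivariantly as $T_pV^T \oplus N_p$, where $N_p$ carries no trivial subrepresentation. Decomposing $N_p$ into real isotypical summands, one obtains a sum of real $2$-dimensional weight spaces indexed by a finite set of non-zero weights $\alpha_1,\dots,\alpha_k \in \mathfrak{t}^\ast$. The key step is to choose $X \in \mathfrak{t}$ with $\alpha_i(X)\neq 0$ for every $i$ and every connected component of $V^T$; this is possible because the union of hyperplanes $\{\alpha_i=0\}$ is a proper closed subset of $\mathfrak{t}$.

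Next I would consider the induced vector field $X^*_V$ on $V$. With this choice of $X$, the zero locus of $X^*_V$ coincides with $V^T$, and the linearization $A_p = \nabla X^*_V|_p$ acts on $N_p$ as a skew-symmetric endomorphism (with respect to the $T$-invariant metric) which is moreover invertible, since its eigenvalues are precisely $\pm i\,\alpha_j(X) \neq 0$. The field $A$ thus defines a smooth section of $\mathrm{End}(N)$ which is skew-symmetric and pointwise invertible. Polar decomposition then yields a smooth orthogonal complex structure
\[
J := A\,(-A^2)^{-1/2}, \qquad J^2 = -\mathrm{Id}_N,
\]
where $(-A^2)^{1/2}$ is the unique positive square root of the positive definite symmetric endomorphism $-A^2 = A^\ast A$. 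Smoothness of $J$ follows from smooth functional calculus on the positive self-adjoint operator $-A^2$.

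Finally, any complex vector bundle admits a canonical orientation coming from any local complex frame $e_1, J e_1, \dots, e_r, J e_r$; this orientation is independent of all choices since $\mathrm{GL}_r(\mathbb{C}) \subset \mathrm{GL}_{2r}^+(\mathbb{R})$ is connected. Hence $N$ is canonically oriented, and one can then define $\Eul(N)$ and its $T$-equivariant refinement $\Eul_o(N)$ as in the previous subsection. The main obstacle is the genericity argument for $X$: one has to check that the weight set is locally finite in a way compatible with the possibly disconnected nature of $V^T$, which is handled using the fact that $T$ acts smoothly on the compact pieces of $V^T$ and the weights are locally constant in $p$.
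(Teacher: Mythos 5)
Your argument is correct, and there is nothing in the paper to compare it against: the lemma is stated with a citation to Kobayashi and no proof is given. What you wrote is the standard proof (generic $X\in\mathfrak{t}$ annihilating no isotropy weight, skew-symmetric invertible linearization $A=\nabla X^*_V$ along $V^T$, polar decomposition $J=A(-A^2)^{-1/2}$, and connectedness of $\mathrm{GL}_r(\mathbb{C})$ for the orientation). Two small points worth making explicit: since $T$ is abelian and $X^*_V$ is $T$-invariant, $A$ and hence $J$ commute with the $T$-action, so the complex structure is $T$-equivariant — this is needed for the \emph{equivariant} Euler class to be defined; and the resulting orientation depends on the chamber of $\mathfrak{t}$ containing $X$ (replacing $X$ by $-X$ flips $J$ and multiplies the orientation by $(-1)^{\mathrm{rk}_{\mathbb{C}}N}$), which is consistent with the paper's notation $\Eul_o$ recording a choice of orientation $o$.
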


Recall that the de Rham differential and the contraction by vector field satisfy the following $d^B \circ \int_{M|B} = \int_{M|B}\circ d^M$ and $\iota(X^*_B) \circ \int_{M|B} = \int_{M|B}\circ \iota(X^*_M)$, so $d_X^B \circ \int_{M|B} = \int_{M|B}\circ d_X^M$, see for example \cite[Chapter 10]{Guillemin:Sternberg:SupersymEquiv} or \cite{duflo1990orbites,Bismut:localisation}.

\begin{thm}[Bismut localization formula, 1986 \cite{Bismut:localisation}]\label{thm : Bismut}~\\
Denote by $j : B^X \hookrightarrow B $, $i : M^X \hookrightarrow M$, $\Eul(N^X_M\cap T^VM,X)$ the equivariant Euler form of $N^X_M\cap T^VM$ and $\int_{M|B}$ the integration along the fibers \cite{Bott:Tu}. 
Let $\alpha \in \mathcal{A}(M)$ be a differential form which is $(d+\iota (X))$-closed. Then the following equality holds in the de Rham cohomology $H(B^X,\mathbb{C})$ since $d_X=d$ on $B^X$:
$$j^* \int_{M|B}\alpha =\int_{M^X|B^X} \dfrac{i^*\alpha}{\Eul(N^X_M\cap T^VM,X)}.$$

\end{thm}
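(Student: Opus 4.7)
The plan is to reduce the statement to a fiberwise application of the classical Berline-Vergne localization formula, using Theorem~\ref{thm:bismut:fibréNormal} to identify the relevant normal bundles.

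First, I would verify that both sides define classes in $H(B^X,\mathbb{C})$. Since integration along the fibers of the $H$-equivariant fibration $p : M\to B$ commutes with the equivariant de Rham differential (as recalled right before the theorem), the form $\int_{M|B}\alpha \in \mathcal{A}(B)$ is $d_X^B$-closed; its restriction $j^*\int_{M|B}\alpha$ lies in $\mathcal{A}(B^X)_X$, and because $X_B$ vanishes on $B^X$ the operator $d_X^B$ agrees there with the ordinary de Rham $d$, so we obtain a genuine de Rham class. By Theorem~\ref{thm:bismut:fibréNormal}, the restriction $p^X : M^X \to B^X$ is an oriented fibration whose vertical normal bundle is $N^X_M\cap T^VM$; Lemma~\ref{lem:orientation:fibreN}, applied to the closure of the one-parameter subgroup $\overline{\langle \exp(tX)\rangle}$, supplies a complex structure that orients this bundle, so $\Eul(N^X_M\cap T^VM,X)$ is defined. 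Its leading component is the ordinary, nowhere-vanishing Euler form, hence the equivariant class is invertible, and $i^*\alpha/\Eul(N^X_M\cap T^VM,X)$ is a well-defined equivariantly closed form on $M^X$ whose fiber-integral over $p^X$ represents the right-hand side.

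Second, the core of the argument is the fiberwise reduction. For $b\in B^X$, the fiber $F_b = p^{-1}(b)$ is invariant under $\exp(tX)$, because $p_*X^*_M = X^*_B$ vanishes at $b$, so $X^*_M$ is tangent to $F_b$ and coincides there with the vector field generated by $X$ on $F_b$. By the first assertion of Theorem~\ref{thm:bismut:fibréNormal}, the fixed-point set of $X$ on $F_b$ equals $F_b\cap M^X = (p^X)^{-1}(b)$, and its normal bundle inside $F_b$ is the restriction of $N^X_M\cap T^VM$. The restriction $\alpha|_{F_b}$ is equivariantly closed on the compact manifold $F_b$. Applying the classical Berline-Vergne localization formula \cite{BV:formuleloc:Kirillov,BGV} on each $F_b$ yields
$$\int_{F_b}\alpha\big|_{F_b}\;=\;\int_{(p^X)^{-1}(b)}\frac{i^*\alpha}{\Eul(N^X_M\cap T^VM,X)}\bigg|_{(p^X)^{-1}(b)},$$
which is precisely the pointwise identity of the theorem at the point $b$.

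The last point to settle, and in my view the main obstacle, is to upgrade this pointwise equality of numbers into the required equality of de Rham classes on $B^X$. The cleanest way to do this is to avoid pointwise arguments altogether and run the proof globally on $M$: construct a compactly supported, equivariantly closed Mathai-Quillen type Thom form for $N^X_M\cap T^VM$ in a tubular neighborhood of $M^X$, use it to produce an equivariantly closed representative of $\alpha$ that is supported near $M^X$ and differs from $\alpha$ by a $d_X$-exact form $d_X\beta$ on $M$, and then apply integration along $p$. Because $\int_{M|B}$ commutes with $d_X$, the exact term becomes $d_X^B\int_{M|B}\beta$, which vanishes in $H(B^X,d)$ after restriction via $j^*$; what remains is the right-hand side, by a standard normal-bundle computation carried out in families rather than fiberwise. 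This is essentially the strategy of Bismut's original proof, and it circumvents any delicate smoothness-in-$b$ issues that would otherwise have to be justified in the naive fiberwise approach.
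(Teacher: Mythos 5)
The paper offers no proof of this statement: it is quoted verbatim from Bismut's 1986 article and used as a black box, so there is nothing internal to compare your argument against. Your sketch is essentially a reconstruction of the standard (Bismut/Berline--Getzler--Vergne) proof, and the overall strategy is sound. Two remarks. First, your fiberwise reduction only recovers the form-degree-zero component of the identity: $\int_{M|B}\alpha$ is a differential form on $B$, whereas applying the one-fiber Berline--Vergne formula over each $b\in B^X$ produces an equality of numbers, namely the integrals of the top fiber-degree parts. You correctly recognize this and discard the fiberwise argument in favor of the global one (equivariant Mathai--Quillen--Thom form concentrated near $M^X$, plus the commutation $d_X^B\circ\int_{M|B}=\int_{M|B}\circ\, d_X^M$ recalled just before the theorem), which is indeed where the actual content lies; as written, though, the "families rather than fiberwise" normal-bundle computation is the whole proof and is only gestured at. Second, your justification for the invertibility of $\Eul(N^X_M\cap T^VM,X)$ is misattributed: the invertible part of the equivariant Euler class is its form-degree-zero component, the Pfaffian $\det^{1/2}(\mu^N(X))$ of the infinitesimal action of $X$ on the normal bundle (nonzero precisely because $X$ acts without kernel transversally to its zero set), not the "ordinary Euler form," which is the top-degree curvature term and may well vanish somewhere. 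With these two points repaired, your outline matches the argument the paper implicitly relies on by citation.
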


\subsection{Delocalized index formulas}
Let again $H$ be a compact Lie group and $\mathfrak{h}$ its Lie algebra. Let $p : M \rightarrow B$ be a $H$-equivariant fibration of compact manifolds. Assume that $B$ is oriented. Assume that $H$ is a topologically cyclic group generated by a single element $h$. We can always reduce to this case by using Remark \ref{rem:remplacerGroupe} below. Denote by $i : T^VM^H \hookrightarrow T^VM$ and $j : B^H \hookrightarrow B$ the inclusions of the fixed point submanifolds. Let us recall that when $W$ is a manifold with trivial action then $\k_\mathrm{H}(W) \simeq \k(W)\otimes R(H)$ and so we have an evaluation morphism $\k_\mathrm{H}(W) \rightarrow \k(W)\otimes \mathbb{C}$ using the previous isomorphism and the character morphism $\chi$. Then we can apply the Chern character tensored by $\mathbb{C}$ to such evaluation. For $u\in \k_\mathrm{H}(W)$ we denote by $\Ch(u(g))$ this composition. 
We have the following theorem from \cite{Benameur:thmFamilleLefschetz}.

\begin{thm}\cite{Benameur:thmFamilleLefschetz}\label{thm:benameur:nonTrivial}
The following equality is satisfied in 
$H(B^H,\mathbb{C})$ :
$$\dfrac{ \Ch\big(j^*\mathrm{Ind^{M|B}_H}(\sigma (h))\big)}{\Ch(\lambda_{-1}N^B(h))} =\int_{T^VM^H|B^H}\dfrac{\Ch\big(i^*\sigma(h)\big)\wedge \hat{A}^2(T^VM^H)}{\Ch\big(\lambda_{-1}(N\cap T^VM\otimes \mathbb{C} \oplus p^*N^B)(h)\big)},$$
where $\sigma (h)$, $\lambda_{-1}N^B(h)$ and $\lambda_{-1}(N\cap T^VM \otimes \mathbb{C} \oplus p^*N^B)(h)$ are the evaluation at the element $h$.
\end{thm}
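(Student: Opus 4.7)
The plan is to combine the $K$-theoretic Lefschetz fixed-point formula for equivariant families with the Atiyah-Singer index theorem for families, and then rewrite the resulting identity using the normal bundle decomposition of Theorem \ref{thm:bismut:fibréNormal}(3) together with the projection formula for fiberwise integration. Throughout, all identities are understood after evaluation at $h$, i.e.\ after tensoring with $\mathbb{C}$ along the character $R(H)\to\mathbb{C}$, $\chi\mapsto\chi(h)$; on an $H$-equivariant bundle on which $h$ acts without the eigenvalue $1$, this evaluation makes the associated $\lambda_{-1}$-class invertible.

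The first ingredient is the $K$-theoretic Lefschetz fixed-point formula for $H$-invariant elliptic families proved in \cite{Benameur:thmFamilleLefschetz} (building on the longitudinal Lefschetz theorem of \cite{Benameur:LongLefschetzKtheorie}). Since $\sigma$ is $H$-invariant along the fibers and $h\in H$ fixes every fiber over $B^H$, pointwise application of the Atiyah-Segal Lefschetz formula to $\sigma_b$ for $b\in B^H$, together with the continuity of the construction in the base, yields in $K(B^H)\otimes\mathbb{C}$ the equality
$$
j^*\,\mathrm{Ind}^{M|B}_H(\sigma)(h) \;=\; \mathrm{Ind}^{M^H|B^H}\!\!\left(\dfrac{i^*\sigma(h)}{\lambda_{-1}\big((N\cap T^VM)\otimes\mathbb{C}\big)(h)}\right).
$$
The normal bundle appearing here is precisely the vertical piece $N\cap T^VM$ identified by Theorem \ref{thm:bismut:fibréNormal}(1)--(2) as the fiberwise normal bundle of $M^H_b\subset M_b$; the induced fibration $p^H:M^H\to B^H$ is handled componentwise in case of varying fiber dimension.

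The second ingredient is the Atiyah-Singer index theorem for families applied to $p^H:M^H\to B^H$. Because $H$ acts trivially on $M^H$, Lemma \ref{lem : Ch_g=Chchi} identifies the $h$-equivariant Chern character on $M^H$ with $\Ch\otimes\chi(h)$; combined with the multiplicativity of the local cyclic Chern character \cite{puschnigg2003diffeotopy}, this gives
$$
\Ch\!\big(j^*\mathrm{Ind}^{M|B}_H(\sigma)(h)\big) \;=\; \int_{T^VM^H|B^H}\dfrac{\Ch(i^*\sigma(h))\wedge \hat{A}^2(T^VM^H)}{\Ch\!\big(\lambda_{-1}((N\cap T^VM)\otimes\mathbb{C})(h)\big)}.
$$
Dividing both sides by $\Ch(\lambda_{-1}N^B(h))$ and applying the projection formula for fiberwise integration, together with the identities $\Ch(\lambda_{-1}(p^*N^B)(h))=p^*\Ch(\lambda_{-1}N^B(h))$ and $\lambda_{-1}(E\oplus F)=\lambda_{-1}(E)\cdot \lambda_{-1}(F)$, absorbs the scalar base factor into the integrand and produces precisely the formula of the theorem.

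The main obstacle is the rigorous derivation of the $K$-theoretic Lefschetz formula for families over a base on which $H$ acts non-trivially; this is the content of \cite{Benameur:thmFamilleLefschetz}, which leverages the longitudinal Lefschetz theorem of \cite{Benameur:LongLefschetzKtheorie} and Atiyah-Segal base-localization techniques. Secondary technical points are the componentwise application of all identities to the (possibly disconnected, dimension-varying) fibration $p^H:M^H\to B^H$ from Theorem \ref{thm:bismut:fibréNormal}, and the invertibility of each $\Ch(\lambda_{-1}(\cdot)(h))$ appearing in a denominator, which follows from the weight decomposition of the action of $h$ on the corresponding bundle together with the absence of $1$ as an eigenvalue on the normal directions.
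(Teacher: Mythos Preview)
The paper does not give a proof of this theorem: it is quoted verbatim from \cite{Benameur:thmFamilleLefschetz} and used as a black box for the subsequent delocalization formulae. There is therefore no ``paper's proof'' to compare your proposal against.

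That said, your outline is the correct skeleton of how this result is obtained in the cited reference: first a $K$-theoretic Lefschetz fixed-point formula for families (the substantive content of \cite{Benameur:thmFamilleLefschetz}, relying on \cite{Benameur:LongLefschetzKtheorie}), then the cohomological Atiyah--Singer index theorem for the fixed-point fibration $p^H:M^H\to B^H$, and finally the projection formula together with the splitting $N=(N\cap T^VM)\oplus p^*N^B$ from Theorem~\ref{thm:bismut:fibréNormal}(3) and $\lambda_{-1}(E\oplus F)=\lambda_{-1}(E)\cdot\lambda_{-1}(F)$ to absorb the base normal factor into the integrand.

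Two minor remarks. First, the appeal to the bivariant local cyclic Chern character of \cite{puschnigg2003diffeotopy} is unnecessary here: once you are in $K(B^H)\otimes\mathbb{C}$, the ordinary topological Chern character and the standard cohomological families index theorem suffice; the local cyclic machinery only enters later in the paper for the transversally elliptic case. Second, your phrasing ``pointwise application of the Atiyah--Segal Lefschetz formula to $\sigma_b$ for $b\in B^H$, together with the continuity of the construction in the base'' hides exactly the nontrivial step: turning a fiberwise identity into a statement in $K(B^H)\otimes\mathbb{C}$ requires the localization and excision arguments of \cite{Benameur:LongLefschetzKtheorie,Benameur:thmFamilleLefschetz}, not merely continuity. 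You acknowledge this as ``the main obstacle'', which is fair, but be aware that this is the entire theorem, not a technicality.
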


Let $s\in H$ and $X\in \mathfrak{h}(s)$. Denote by $h=se^X$. Assume that $X$ is sufficiently close to zero such that the manifold $M^X$ and $M^{e^X}$ coincides, see \cite{BV:formuleloc:Kirillov,BGV,paradan2008index,DV:comoEquiDescente}.
On the one hand, if $m\in (M^s)^X$ then $s\cdot m=m$ and $X_M(m)=0$ so $e^X\cdot s\cdot m=m$ and therefore $h\cdot m=m$ because $e^Xs=se^X$. On the other hand, if $m\in M^h$ then $se^X\cdot m=m$ so $s_*X_M( m)=0$. Then we have $X_M(m)=0$ since $s_*$ is an isomorphism so $m\in M^X$, we deduce that $e^X\cdot m=m$. However $s\cdot m=se^X \cdot m=m$ therefore $m\in M^s$. We have $m\in (M^s)^X$ because $(M^s)^X=M^s\cap M^X$.

\begin{lem}\label{lem: Chlambda2}
Let $s\in H$ and $X\in \mathfrak{h}(s)$, we denote by $h$ the element $se^X$. Assume $X$ small enough so that $(M^s)^X=M^h$. We have the following inclusions \xymatrix{ 
M^h\ar@{^{(}->}[r]^{i_{h,s}} & M^s \ar@{^{(}->}[r]^{i_s} & M
} and we denote by $i_h$ the inclusion of $M^h$ in $M$. We denote by $N_h$ the normal bundle of $M^h$ in $M$, $N_h^s$ the normal bundle of $M^h$ in $M^s$ and $N_s$ the normal bundle of $M^s$ in $M$. Then we have:
\begin{subequations}
  \begin{alignat}{3}
\begin{split}	1.~& \hspace*{-0.05cm}{\Ch\big(\lambda_{-1}(N_h\hspace*{-0.08cm}\cap \hspace*{-0.08cm} T^V\hspace*{-0.08cm}M\otimes \mathbb{C})(se^X)\big) \hspace*{-0.08cm}}
	\\
	&{= \hspace*{-0.08cm}\Ch\big(\lambda_{-1}(N_h^s\hspace*{-0.08cm}\cap\hspace*{-0.08cm} T^V\hspace*{-0.08cm}M\otimes \mathbb{C}), X\big)i_{h,s}^*\Ch_s\big(\lambda_{-1}(N_s\hspace*{-0.08cm}\cap\hspace*{-0.08cm} T^V\hspace*{-0.08cm}M\otimes \mathbb{C}), X\big),}
	\label{lem : Chlambda2 : 1}
	\end{split}\\
	2.~&\hspace*{-0.05cm}\Ch\big(\lambda_{-1}(N_h^s\cap T^VM\otimes \mathbb{C}),X\big)=\Eul (N_h^s\cap T^VM\otimes \mathbb{C},X)\wedge \hat{A} (N^s_h\cap T^VM,X)^{-2},\label{lem :Chlambda2: 2}
	\\
	3.~&\hspace*{-0.05cm}\Ch_s\big(\lambda_{-1}(N_s\cap T^VM\otimes \mathbb{C}),X\big)=D_s(N_s\cap T^VM,X).
  \end{alignat}
\end{subequations}

\end{lem}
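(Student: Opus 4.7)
The plan is to prove the three identities in turn, all resting on a decomposition of normal bundles along the tower $M^h\subset M^s\subset M$. On $M^h$ the short exact sequence $0\to N_h^s\to N_h\to i_{h,s}^*N_s\to 0$ splits $H$-equivariantly and orthogonally via an $H$-invariant Riemannian metric adapted to the fibration, and this restricts to a decomposition of vertical parts
$$N_h\cap T^VM \;=\; (N_h^s\cap T^VM)\oplus i_{h,s}^*(N_s\cap T^VM).$$
On the first summand $s$ acts trivially (this bundle lies inside $TM^s_{|M^h}$, and $s$ acts by the identity on $M^s$), while on the second $s$ acts without eigenvalue~$1$ (it is pulled back from the normal bundle of the fixed locus $M^s$).

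For part~(1), multiplicativity of $\lambda_{-1}$ on direct sums and of the Chern character on tensor products factors the left-hand side into two pieces. For the factor coming from $N_h^s\cap T^VM$, the triviality of the $s$-action lets me invoke Lemma~\ref{lem : Ch_g=Chchi}: the $(se^X)$-evaluation of the Chern character reduces to evaluation at $e^X$, which is precisely the equivariant Chern character $\Ch(\cdot,X)$ entering the right-hand side. For the factor pulled back from $N_s\cap T^VM$, I use that $h=se^X$ acts trivially on $M^h=(M^s)^X$: Lemma~\ref{lem : Ch_g=Chchi} identifies the $(se^X)$-evaluation on $M^h$ with $\Ch_{se^X}(\cdot,0)$, and the compatibility identity of Theorem~\ref{thm:chern:paradan}\,(1), applied at $Y=0$, rewrites this as $\Ch_s(\cdot,X)_{|M^h}=i_{h,s}^*\Ch_s(\cdot,X)$. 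Multiplying the two factors yields the desired formula.

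Parts~(2) and~(3) are standard characteristic-class identities. For part~(2), I apply the splitting principle to the oriented real bundle $V:=N_h^s\cap T^VM$, writing its formal equivariant Chern roots as $\pm y_j$ (legitimate for $X$ in a small neighborhood of $0$). I then combine
$$\Ch(\lambda_{-1}(V\otimes\mathbb{C}))=\prod_j(1-e^{y_j})(1-e^{-y_j}),\quad \Eul(V\otimes\mathbb{C})=\prod_j(-y_j^2),\quad \hat{A}(V)=\prod_j\frac{y_j}{e^{y_j/2}-e^{-y_j/2}},$$
via the elementary identity $(1-e^y)(1-e^{-y})=-(e^{y/2}-e^{-y/2})^2$ to conclude $\Ch(\lambda_{-1}(V\otimes\mathbb{C}),X)=\Eul(V\otimes\mathbb{C},X)\wedge\hat{A}(V,X)^{-2}$. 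For part~(3), the relevant input is the classical super-trace formula $\Str_{\bigwedge W}(B^{\bigwedge})=\det_W(1-B)$ for an endomorphism $B$ of a finite-dimensional complex vector space $W$. Applied fiberwise to $W=(N_s\cap T^VM)\otimes\mathbb{C}$ with $B$ the operator $s\cdot\exp$ of the equivariant curvature (in the sign convention the paper uses to define $\Ch_s$ and $D_s$), this identity directly produces $\det(1-s^{N_s\cap T^VM}\exp(R_{N_s\cap T^VM}(X)))=D_s(N_s\cap T^VM,X)$.

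The main obstacle lies in part~(1): one must carefully juggle three different notions of Chern character --- the evaluation $\Ch(\cdot)(se^X)$ on characters, the equivariant Chern character $\Ch(\cdot,X)$ (Cartan model), and the Paradan--Vergne $\Ch_s(\cdot,X)$ --- and verify their compatibility on the double fixed submanifold $M^h=(M^s)^X$ via the descent identity of Theorem~\ref{thm:chern:paradan}, while also checking that the decomposition of $N_h$ respects simultaneously the $H$-action, the splitting of $s$-eigenspaces, and the vertical subbundle $T^VM$. Once this is in place, parts~(2) and~(3) reduce to the splitting principle and a one-line super-trace computation.
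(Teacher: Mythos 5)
Your proposal is correct and follows essentially the same route as the paper: the same normal-bundle decomposition $N_h\cap T^VM=(N_h^s\cap T^VM)\oplus i_{h,s}^*(N_s\cap T^VM)$, the same use of the trivial $s$-action on $N_h^s$ together with the compatibility identity of Theorem~\ref{thm:chern:paradan}(1) at $Y=0$ for part (1), and the same supertrace/determinant identity $\sum_i(-1)^i\Tr(\Lambda^i A)=\det(1-A)$ for part (3). The only cosmetic difference is in part (2), where you argue via formal Chern roots and $(1-e^{y})(1-e^{-y})=-(e^{y/2}-e^{-y/2})^2$, whereas the paper manipulates $\det\big(e^{-R_0/2}(e^{R_0}-1)/R_0\big)$ directly --- these are the same calculation.
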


\begin{proof}~%
1. The element $se^X$ acts trivially on $M^h$. On the one hand, we have the following identifications: 
\begin{enumerate}
\item[-]$TM_{|M^s}=TM^s \oplus N_s$,
\item[-]$TM_{|M^h}=TM^h \oplus N_h=(TM^s)_{|M^h} \oplus (N_s)_{|M^h}=TM^h\oplus N_h^s\oplus (N_s)_{|M^h},$
\end{enumerate}
so $N_h\cap T^VM=N_h^s\cap T^VM \oplus (N_s)_{|M^h}\cap T^VM.$ 
We know that $\Lambda (E\oplus E')\cong\Lambda E \otimes \Lambda E'$ and hence
$$\begin{array}{lll}
\Ch\big(\lambda_{-1}(N_h\cap T^VM\otimes \mathbb{C})(se^X)\big)\\
=\Ch\big(\lambda_{-1}(N_h^s\cap T^VM\otimes \mathbb{C})(se^X)\big)\wedge\Ch\big(i_h^*\lambda_{-1}(N_s\cap T^VM\otimes \mathbb{C})(se^X)\big).\end{array}$$
On the other hand, the vector bundle $N_h^s$ is a vector subbundle of $TM^s$ on which $s$ acts trivially. So the action on $N^s_h$ is trivial. We deduce that the action of $s$ on $\lambda_{-1}(N_h^s\cap T^VM\otimes \mathbb{C})$ is trivial and we have:
$$\begin{array}{lll}
\Ch_{se^X}\big(\lambda_{-1}N_h^s\cap T^VM\otimes \mathbb{C}\big)&=\Ch\big(\lambda_{-1}N_h^s\cap T^VM\otimes \mathbb{C}(se^X)\big)\\
&=\Ch\big(\lambda_{-1}N_h^s\cap T^VM\otimes \mathbb{C},X\big) .\end{array}$$ 
Moreover, using Theorem \ref{thm:chern:paradan}, we have:
$$\begin{array}{lll}\Ch\big(i_h^*\lambda_{-1}(N_s\cap T^VM\otimes \mathbb{C})(se^X)\big)&=\Ch_{se^X}\big(\lambda_{-1}(N_s\cap T^VM\otimes \mathbb{C})\big)(0)\\
&=i_{h,s}^*\Ch_{s}\big(\lambda_{-1}(N_s\cap T^VM\otimes \mathbb{C}),X\big).\end{array}$$
2. Denote by $R_0(X)$ an equivariant curvature on $N^s_h \cap T^VM$ associated to a $H$-invariant metric connection. We have $\dfrac{e^{R_0/2}-e^{-R_0/2}}{R_0}=e^{-R_0/2}\dfrac{e^{R_0}-1}{R_0}$ and $\det\big(e^{-R_0/2}\big)=1$ since $R_0$ is antisymmetric. It follows that $\hat{A}(N^s_h\cap T^VM ,X)^{-2}=\det\bigg(\dfrac{-(1-e^{R_0(X)})}{R_0(X)}\bigg)$, so we get the result since $\Eul(N^s_h\cap T^VM \otimes \mathbb{C},X)=\Eul(N^s_h\cap T^VM,X)^2=\det \big(-R_0(X)\big)$.\\
3. Denote by $R(X)$ an equivariant curvature on $N^s\cap T^VM$ and $R(X)\otimes \mathbb{C}$ the equivariant curvature associated on $N^s\cap T^VM \otimes \mathbb{C}$. For any linear map $A$ on $\mathbb{R}^n$, if we denote by $\Lambda^{i}(A )$ the induced map on $\Lambda^i\mathbb{R}^n$ then we have:

\begin{equation}
 \sum (-1)^i\Tr\bigg(\Lambda^{i}\big(A \big)\bigg)=\det \big(1-A\big). \label{equ:Trlambda=det}
 \end{equation}
The result follows from (\ref{equ:Trlambda=det}) applied to $se^{R(X)\otimes \mathbb{C}}$.
\end{proof}

\begin{lem}\label{lem : Chlambda1:nonTrivial}  
In the cohomology with complex coefficients of $B^H$, the folowing equality is satisfied at the point $h=e^X$:
\begin{equation}\begin{split}
\Ch\big(\lambda_{-1}(N\cap T^VM &\otimes \mathbb{C}\oplus p^*N^B)(e^X)\big)\\
&=\Ch\big(\lambda_{-1}(N\cap T^VM \otimes \mathbb{C}),X\big)\wedge p^*\Ch (\lambda_{-1}N^B,X). \label{equation:Chlambda:nonTriviale}
\end{split}
	\end{equation}
\end{lem}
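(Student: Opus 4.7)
The identity (\ref{equation:Chlambda:nonTriviale}) will follow from three standard ingredients combined in sequence: multiplicativity of the $\lambda_{-1}$ operation on direct sums, naturality of the Chern character under the pullback by $p$, and the identification between evaluation at $e^X$ of the Chern character of an $H$-equivariant virtual bundle on a trivially acted manifold and its (non-localized) equivariant Chern character, which is the content of Lemma \ref{lem : Ch_g=Chchi}.

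\textbf{Step 1: splitting the direct sum.} From the canonical isomorphism $\lambda_{-1}(E \oplus F)\cong \lambda_{-1}(E)\otimes \lambda_{-1}(F)$, applied to the $H$-equivariant bundles $E=N\cap T^VM\otimes \mathbb{C}$ and $F=p^*N^B$ on $M^H$, together with the multiplicativity of $\Ch$ on tensor products of virtual bundles after evaluation, the left-hand side of (\ref{equation:Chlambda:nonTriviale}) factors as
\[
\Ch\bigl(\lambda_{-1}(N\cap T^VM \otimes \mathbb{C})(e^X)\bigr)\wedge \Ch\bigl(\lambda_{-1}(p^*N^B)(e^X)\bigr).
\]

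\textbf{Step 2: naturality under $p^*$.} Since the constructions $\lambda_{-1}$, $\Ch$ and evaluation of the character at $e^X$ all commute with the pullback by the (restricted) fibration $p:M^H\to B^H$, the second factor equals $p^*\Ch\bigl(\lambda_{-1}(N^B)(e^X)\bigr)$.

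\textbf{Step 3: passing to the equivariant Chern character.} Since $H$ fixes both $M^H$ and $B^H$ pointwise, Lemma \ref{lem : Ch_g=Chchi} (with $s=e$) applies on each of these submanifolds and yields
\[
\Ch\bigl(\lambda_{-1}(N\cap T^VM \otimes \mathbb{C})(e^X)\bigr)=\Ch\bigl(\lambda_{-1}(N\cap T^VM \otimes \mathbb{C}),X\bigr),
\]
together with $\Ch\bigl(\lambda_{-1}(N^B)(e^X)\bigr)=\Ch(\lambda_{-1}N^B,X)$ on $B^H$. Substituting these two identities into the expression obtained after Steps~1 and~2 gives exactly the right-hand side of (\ref{equation:Chlambda:nonTriviale}).

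\textbf{Main obstacle.} There is no genuine analytic difficulty; the content of the lemma is purely formal bookkeeping. The only point that requires care is recognising that the two factors live a priori on different fixed-point manifolds ($M^H$ for the vertical normal part, $B^H$ for the base part), and that one must invoke the isomorphism of Lemma \ref{lem : Ch_g=Chchi} separately on each before assembling them via $p^*$ into the wedge product of equivariant closed forms on $M^H$.
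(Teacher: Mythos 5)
Your argument is correct and uses exactly the two ingredients the paper invokes, namely the splitting $\lambda_{-1}(E\oplus F)\cong\lambda_{-1}(E)\otimes\lambda_{-1}(F)$ with multiplicativity and naturality of $\Ch$, together with Lemma \ref{lem : Ch_g=Chchi} applied on the trivially acted fixed-point sets $M^H$ and $B^H$; the paper's proof is a one-line citation of precisely these facts. You merely perform the conversion to the equivariant Chern character after splitting rather than before, which is an immaterial reordering.
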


\begin{proof}
The proof stems from Lemma \ref{lem : Ch_g=Chchi} and the multiplicativity  of the equivariant Chern character.
\end{proof}

\begin{thm}
For $X\in \mathfrak{h}$ small enough, the following equality is satisfied in the equivariant cohomology group $H(B,d_X)$: 
$$\Ch\big(\mathrm{Ind^{M|B}_H}(\sigma),X\big)=\int_{T^VM|B}\Ch(\sigma,X)\wedge \hat{A}^2(T^VM,X).$$

\end{thm}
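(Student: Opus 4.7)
The plan is to restrict both sides along the Berline--Vergne localization isomorphism $j^{*}\colon H(B,d_{X})\to H(B^{X},\mathbb{C})$ and then to deduce the resulting identity in ordinary de~Rham cohomology from the Lefschetz formula for elliptic families (Theorem~\ref{thm:benameur:nonTrivial}) combined with Bismut's localization formula (Theorem~\ref{thm : Bismut}). Via Remark~\ref{rem:remplacerGroupe} I first reduce to the case where $H$ is topologically cyclic, generated by $h=e^{X}$, so that $M^{H}=M^{X}$ and $B^{H}=B^{X}$ for $X$ sufficiently small. Since $H$ acts trivially on $B^{X}$, Lemma~\ref{lem : Ch_g=Chchi} applied with $s=e$ identifies the pullback of the LHS with the ordinary Chern character of the evaluation of the index class at $e^{X}$:
$$j^{*}\,\Ch(\mathrm{Ind}^{M|B}_{H}(\sigma),X)=\Ch\bigl(j^{*}\mathrm{Ind}^{M|B}_{H}(\sigma)(e^{X})\bigr).$$

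Next I would apply Theorem~\ref{thm:benameur:nonTrivial} at $h=e^{X}$ and simplify the $\Ch(\lambda_{-1})$ factors using Lemmas~\ref{lem : Chlambda2} and \ref{lem : Chlambda1:nonTrivial} with $s=e$, so that $N^{s}_{h}=N^{X}_{M}$. Lemma~\ref{lem : Chlambda1:nonTrivial} factors the denominator, allowing the $p^{*}\Ch(\lambda_{-1}N^{B},X)$ piece to be pulled out of the fibre integral and cancelled against the numerator $\Ch(\lambda_{-1}N^{B}(e^{X}))$, while Lemma~\ref{lem : Chlambda2}(2) rewrites
$$\Ch\bigl(\lambda_{-1}(N^{X}_{M}\cap T^{V}M\otimes\mathbb{C}),X\bigr)=\Eul(N^{X}_{M}\cap T^{V}M,X)^{2}\wedge\hat{A}(N^{X}_{M}\cap T^{V}M,X)^{-2},$$
using $\Eul(V\otimes\mathbb{C})=\Eul(V)^{2}$ for real oriented $V$. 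This presents $j^{*}\Ch(\mathrm{Ind}^{M|B}_{H}(\sigma),X)$ as
$$\int_{T^{V}M^{X}|B^{X}}\frac{\Ch(i^{*}\sigma(e^{X}))\wedge\hat{A}^{2}(T^{V}M^{X},X)\wedge\hat{A}^{2}(N^{X}_{M}\cap T^{V}M,X)}{\Eul(N^{X}_{M}\cap T^{V}M,X)^{2}}.$$

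For the RHS I would apply Bismut's formula to the fibration $T^{V}M\to B$ and the $d_{X}$-closed, fibrewise compactly supported form $\alpha=\Ch(\sigma,X)\wedge\hat{A}^{2}(T^{V}M,X)$. Since the fibre of $T^{V}M\to B$ at $b$ is $TM_{b}$, the vertical normal of $(T^{V}M)^{X}=T^{V}M^{X}$ in $T^{V}M$ splits at $v\in T^{V}M^{X}_{m}$ as two copies of $(N^{X}_{M}\cap T^{V}M)_{m}$: one from horizontal motion of $m$ inside $M_{b}$, one from motion of $v$ inside the fibre $T^{V}_{m}M$. Its equivariant Euler class is therefore $\Eul(N^{X}_{M}\cap T^{V}M,X)^{2}$, and the restriction $i^{*}\hat{A}^{2}(T^{V}M,X)$ factors as $\hat{A}^{2}(T^{V}M^{X},X)\wedge\hat{A}^{2}(N^{X}_{M}\cap T^{V}M,X)$ via the splitting $T^{V}M|_{M^{X}}=T^{V}M^{X}\oplus(N^{X}_{M}\cap T^{V}M)$. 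Substituting in Theorem~\ref{thm : Bismut} yields exactly the same integrand over $T^{V}M^{X}|B^{X}$ as above, closing the argument. The main obstacle is precisely this normal-bundle computation: verifying via Theorem~\ref{thm:bismut:fibréNormal} that the doubling by two copies of $N^{X}_{M}\cap T^{V}M$ produces exactly the squared Euler class needed to match the Lefschetz side; the rest of the comparison is then bookkeeping of equivariant Euler and $\hat{A}$-classes together with naturality of the equivariant Chern character under $j$.
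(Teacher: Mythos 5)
Your proposal is correct and follows essentially the same route as the paper: restriction along the Berline--Vergne isomorphism $j^{*}\colon H(B,d_{X})\to H(B^{X},\mathbb{C})$, the family Lefschetz theorem at $e^{X}$ simplified via Lemmas \ref{lem : Chlambda2} and \ref{lem : Chlambda1:nonTrivial}, and Bismut localization applied to $T^{V}M\to B$ with the vertical normal bundle of $T^{V}M^{X}$ identified as $\pi^{*}\bigl((N^{X}_{M}\cap T^{V}M)\otimes\mathbb{C}\bigr)$, exactly the identification the paper records. The normal-bundle doubling you single out as the main obstacle is indeed the step the paper makes explicit, and your treatment of it matches.
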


\begin{proof}By Theorem \ref{thm:benameur:nonTrivial}, we have the following equality in the cohomology of $B^H$: 
\begin{equation}
\dfrac{ \Ch\big(j^*\mathrm{Ind^{M|B}_H}(\sigma (e^X))\big)}{\Ch(\lambda_{-1}N^B(e^X))} =\int_{T^VM^H|B^H}\dfrac{\Ch\big(i^*\sigma(e^X)\big)\wedge \hat{A}^2(T^VM^H)}{\Ch\big(\lambda_{-1}(N\cap T^VM\otimes \mathbb{C} \oplus p^*N^B)(e^X)\big)}. \label{equation:benameur:nonTrivial:eX}
\end{equation}
We have $\Ch\big(j^*\mathrm{Ind^{M|B}_H}(\sigma (e^X))\big)=\Ch\big(j^*\mathrm{Ind^{M|B}_H}(\sigma),X\big)$ since the action is trivial on $B^H$.
By Lemma \ref{lem : Chlambda1:nonTrivial}, we get:
\begin{equation}\begin{split}
\Ch\big(\lambda_{-1}(N\cap T^VM &\otimes \mathbb{C}\oplus p^*N^B)(e^X)\big)\\
&=\Ch\big(\lambda_{-1}(N\cap T^VM \otimes \mathbb{C}),X\big)\wedge p^*\Ch (\lambda_{-1}N^B,X).
\end{split}
	\end{equation}
Now using Lemma \ref{lem: Chlambda2} equation \ref{lem :Chlambda2: 2} with $s=e$ and $h=e^X$, we obtain: 
$$\Ch\big(\lambda_{-1}(N\cap T^VM\otimes \mathbb{C}(e^X)\big)= \Eul(N\cap T^VM \otimes \mathbb{C},X)\wedge \hat{A}(N\cap T^VM ,X)^{-2}. $$
So we get the following equality:
\begin{align*}
  &\dfrac{ \Ch\big(j^*\mathrm{Ind^{M|B}_H}(\sigma ),X\big)}{\Ch(\lambda_{-1}N^B,X)}=\int_{T^VM^H|B^H}\dfrac{\Ch(i^*\sigma,X)\wedge \hat{A}^2(N\cap T^VM ,X)\wedge \hat{A}^2(T^VM^H)}{\Eul(N\cap T^VM \otimes \mathbb{C},X)\wedge p^*\Ch (\lambda_{-1}N^B,X)}.
\end{align*} 
Moreover, we have $\hat{A}^2(N\cap T^VM,X)\wedge\hat{A}^2(T^VM^H) =i^* \hat{A}^2(T^VM,X)$, so:
$$\dfrac{j^* \Ch\big(\mathrm{Ind^{M|B}_H}(\sigma ),X\big)}{\Ch(\lambda_{-1}N^B,X)}= \dfrac{1}{\Ch (\lambda_{-1}N^B,X)}\wedge \int_{T^VM^H|B^H}\hspace*{-0.5cm}\dfrac{i^*\big(\Ch(\sigma,X) \wedge \hat{A}^2(T^VM,X)\big)}{\Eul(N\cap T^VM \otimes \mathbb{C},X)}.$$
Then it follows that:
$$j^* \Ch\big(\mathrm{Ind^{M|B}_H}(\sigma ),X\big)= \int_{T^VM^H|B^H}\hspace*{-0.5cm}\dfrac{i^*\big(\Ch(\sigma,X) \wedge \hat{A}^2(T^VM,X)\big)}{\Eul(N\cap T^VM \otimes \mathbb{C},X)}.$$
Note that the normal bundle $\mathcal{N}(T^VM^X,T^VM)$ to the inclusion of $T^VM^X$ in $T^VM$ is isomorphic to $\pi^{*}(N^X_M\oplus N^X_M\cap T^VM)$ where $\pi : T^VM \rightarrow M$ is the projection. So $\mathcal{N}(T^VM^X,T^VM) \cap \ker(d(p\circ \pi))$ is isomorphic to $\pi^*\big((N^X_M\cap T^VM) \otimes \mathbb{C}\big)$.\\
Moreover, for $X \in \mathfrak{h}$ small enough, the zeros of the vector field generated by $X$ and the fixed points of $e^X$ coincide so 
$T^VM^H=T^VM^X$ and $B^H=B^X$.
Since $\Ch(\sigma , X)$ has compact support, we can apply the Bismut localization formula (Theorem \ref{thm : Bismut}), to get:
$$\int_{T^VM^X|B^X}\dfrac{i^*\big(\Ch(\sigma,X)\wedge \hat{A}^2(T^VM^H, X)\big)}{\Eul(N\cap T^VM \otimes \mathbb{C},X)}=j^*\int_{T^VM|B}\Ch(\sigma,X)\wedge \hat{A}^2(T^VM,X).$$
Now, using Proposition 2.1 of \cite{BV:formuleloc:Kirillov} which says that the restriction $j^* : H(B,d_X) \rightarrow H(B^X,\mathbb{C})$ is an isomorphism, we get the desired result.
\end{proof}

We now give a similar formula for the equivariant Chern character of the index of an elliptic family in the neighborhood of a point $ s \in H $ different from the identity. 

\begin{thm}\label{thm:BV:famille:elliptique} Let $s\in H$ and $X\in \mathfrak{h}(s)$, we denote by $h$ the element $se^X$. We suppose $X$ small enough, such that $(M^s)^X=M^h$. Denote by $N_s$ the normal bundle of $M^s$ in $M$.
The following equality is true in the equivariant cohomology $H(B^s,d_X)$: 
$$\Ch_{s}\big(\mathrm{Ind^{M|B}_H}(\sigma),X\big)=\int_{T^VM^s|B^s}\dfrac{\Ch_{s}(\sigma,X)\wedge \hat{A}^2(T^VM^s,X)}{D_s(N_s\cap T^VM,X)}.$$ 

\end{thm}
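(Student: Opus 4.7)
The plan is to follow the same three-step blueprint as in the proof of the preceding theorem (the identity case), but applying Benameur's Lefschetz formula for families at the cyclic element $h := se^X$ rather than at $e^X$. First, I would apply Theorem \ref{thm:benameur:nonTrivial} at $h$; the choice of $X\in\mathfrak{h}(s)$ small enough ensures $M^h=(M^s)^X$ and $B^h=(B^s)^X$, so the formula gives an identity in $H(B^h,\mathbb{C})$:
$$\dfrac{\Ch\big(j_h^*\mathrm{Ind^{M|B}_H}(\sigma)(h)\big)}{\Ch(\lambda_{-1}N^B(h))}=\int_{T^VM^h|B^h}\dfrac{\Ch(i_h^*\sigma(h))\wedge\hat{A}^2(T^VM^h)}{\Ch\big(\lambda_{-1}(N_h\cap T^VM\otimes\C\oplus p^*N^B)(h)\big)}.$$

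Second, I would decompose each factor $\Ch(\lambda_{-1}\cdot)(h)$ via Lemma \ref{lem: Chlambda2}. Using the splitting $N_h\cap T^VM = (N_h^s\cap T^VM)\oplus (N_s)_{|M^h}\cap T^VM$ and the fact that $s$ acts trivially on $N_h^s$, the key calculation reads
$$\Ch\big(\lambda_{-1}(N_h\cap T^VM\otimes\C)(h)\big)=\Ch\big(\lambda_{-1}(N_h^s\cap T^VM\otimes\C),X\big)\wedge i_{h,s}^*\Ch_s\big(\lambda_{-1}(N_s\cap T^VM\otimes\C),X\big),$$
and identifies the first factor with $\Eul(N_h^s\cap T^VM\otimes\C,X)\wedge\hat{A}^{-2}(N_h^s\cap T^VM,X)$ while the second equals $D_s(N_s\cap T^VM,X)$. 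Performing the same decomposition on the horizontal normal bundle $N^B = N_h^{B,s}\oplus (N_s^B)_{|B^h}$ in the base and invoking Lemma \ref{lem : Ch_g=Chchi}, the left-hand side becomes $j_h^*\Ch_s(\mathrm{Ind^{M|B}_H}(\sigma),X)$, after the horizontal $D_s(N_s^B,X)$ factor is absorbed in the usual way.

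Third, after the multiplicative identity $i_h^*\hat{A}^2(T^VM^s,X)=\hat{A}^2(N_h^s\cap T^VM,X)\wedge\hat{A}^2(T^VM^h)$ (and its horizontal analogue), the surviving identity in $H(B^h,\mathbb{C})$ takes the form
$$j_h^*\Ch_s\big(\mathrm{Ind^{M|B}_H}(\sigma),X\big)=\int_{T^VM^h|B^h}\dfrac{i_h^*\Big(\Ch_s(\sigma,X)\wedge\hat{A}^2(T^VM^s,X)\big/D_s(N_s\cap T^VM,X)\Big)}{\Eul\big(\mathcal{N}(T^VM^h,T^VM^s)\cap\ker d(p^s\circ\pi^s)\otimes\C,X\big)}.$$
Since $\Ch_s(\sigma,X)$ is compactly supported, I would then apply the Bismut localisation formula (Theorem \ref{thm : Bismut}) \emph{in reverse} to the fibration $p^s\circ\pi^s:T^VM^s\to B^s$ equipped with its $X$-action, to recognise the right-hand side as $j_h^*\int_{T^VM^s|B^s}\Ch_s(\sigma,X)\wedge\hat{A}^2(T^VM^s,X)/D_s(N_s\cap T^VM,X)$. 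The desired equality in $H(B^s,d_X)$ then follows, exactly as in the $s=e$ case, from the fact that the restriction $j_h^*:H(B^s,d_X)\to H(B^h,\mathbb{C})$ is an isomorphism (Proposition 2.1 of \cite{BV:formuleloc:Kirillov}).

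The main obstacle lies in the careful bookkeeping of the two layers of nested normal bundles: one must verify the identification $\mathcal{N}(T^VM^h,T^VM^s)\cap\ker d(p^s\circ\pi^s)\simeq\pi^*\big((N_h^s\cap T^VM)\otimes\C\big)$, together with its analogue in the base, so that the Euler class produced by Bismut's localisation matches exactly the $\Eul\wedge\hat{A}^{-2}$ combination obtained from unpacking the $\lambda_{-1}$ classes at $h=se^X$. Once this matching is secured, the remaining algebra is an equivariant refinement, around the point $s$, of the computation performed in the identity case.
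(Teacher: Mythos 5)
Your proposal follows essentially the same route as the paper's own proof: apply Theorem \ref{thm:benameur:nonTrivial} at $h=se^X$, unpack the $\lambda_{-1}$ classes via Lemma \ref{lem: Chlambda2} (splitting off $\Eul(N_h^s\cap T^VM\otimes\C,X)\wedge\hat{A}^{-2}$ and $D_s(N_s\cap T^VM,X)$), use the multiplicativity of $\hat{A}$ along $T^VM^h\oplus N_h^s=i_{h,s}^*T^VM^s$, apply Bismut localisation to $T^VM^s\to B^s$, and conclude via the isomorphism $j_{h,s}^*:H(B^s,d_X)\to H(B^h,\mathbb{C})$. The normal-bundle identification you flag as the main obstacle is exactly the one the paper records in the $s=e$ case, so the argument is correct as proposed.
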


\begin{proof}We have the inclusions \xymatrix{ 
M^h\ar@{^{(}->}[r]^{i_{h,s}} & M^s \ar@{^{(}->}[r]^{i_s} & M
}, we denote by $i_h$ the inclusion of $M^h$ in $M$. Similarly, we have the inclutions \xymatrix{ 
B^h\ar@{^{(}->}[r]^{j_{h,s}} & B^s \ar@{^{(}->}[r]^{j_s} & B
}, we denote by $j_h$ the inclusion of $B^h$ in $B$. We denote by $N_h$ the normal bundle of $M^h$ in $M$, $N_h^s$ the normal bundle of $M^h$ in $M^s$.
By Theorem \ref{thm:benameur:nonTrivial} for $h$, on $B^h$, we get:
\begin{align*}
\dfrac{ \Ch\big(j_h^*\mathrm{Ind^{M|B}_H}(\sigma (h))\big)}{\Ch(\lambda_{-1}N^B(h))} =\int_{T^VM^h|B^h}\dfrac{\Ch\big(i_h^*\sigma(h)\big)\wedge \hat{A}^2(T^VM^H)}{\Ch\big(\lambda_{-1}(N_g\cap T^VM\otimes \mathbb{C} \oplus p^*N^B)(h)\big)}.
\end{align*}
By Theorem \ref{thm:chern:paradan} 1 and Lemma \ref{lem : Ch_g=Chchi}, we have the following equalities: 
\begin{enumerate}
\item[-]$\Ch\big(j_h^*\mathrm{Ind^{M|B}_H}(\sigma (h))\big)=j_{h,s}^*\Ch_s\big((\mathrm{Ind^{M|B}_H})(\sigma),X\big)$,
\item[-] $\Ch\big(i_h^*\sigma(h)\big)=i_{h,s}^*\Ch_s(\sigma,X).$
\end{enumerate}
In fact, let us spell the second equality the first is obtain similarly. We have $\Ch(j_h(\sigma)(se^X))=\Ch_{se^X}(\sigma,0)$ since the action of $h=se^X$ is trivial on $M^h$ and so now using the second part of assertion 1 of Theorem \ref{thm:chern:paradan} \cite[Theorem\  3.19]{paradan2008equivariant}, we know that $\Ch_{se^X}(\sigma,0)=\Ch_s(\sigma,X+0)_{|M^s\cap M^h}=j^{*}_{h,s}\Ch_s(\sigma ,X)$.\\ 
\noindent
Using Lemma \ref{lem : Chlambda1:nonTrivial}, applying (\ref{lem :Chlambda2: 2}) 
and the fact that $T^VM^h\oplus N^s_h =i^*_{h,s}T^VM^s$, we get:\\
$\begin{array}{lll} j_{h,s}^*\Ch_s\big(\hspace*{-0.3cm}&\mathrm{Ind^{M|B}_H}(\sigma),X\big)\\
&=\displaystyle\int_{T^VM^h|B^h} \dfrac{ i_{h,s}^*\big(\Ch_s(\sigma,X)\wedge \hat{A}^2(T^VM^s,X)\wedge D_s(N_s\cap T^VM,X)^{-1}\big)}{\Eul (N_h^s\cap T^VM\otimes \mathbb{C},X)}.\end{array}$\\
Now, for $X \in \mathfrak{h}(s)$ small enough, we have $M^h=(M^s)^X$ and $T^VM^h=(T^VM^s)^X$, so:\\
$\begin{array}{lll}
j_{h,s}^*\Ch_s\big(\hspace*{-0.3cm}&\mathrm{Ind^{M|B}_H}(\sigma),X\big)\\
&=\displaystyle\int_{(T^VM^s)^X|(B^s)^X} \hspace*{-0.6cm}\dfrac{ i_{h,s}^*\big(\Ch_s(\sigma,X)\wedge \hat{A}^2(T^VM^s,X)\wedge D_s(N_s\cap T^VM,X)^{-1}\big)}{\Eul (N_h^s\cap T^VM\otimes \mathbb{C},X)},
\end{array}$\\
by Theorem \ref{thm : Bismut}, it follows that:\\
$\begin{array}{lll}
j_{h,s}^*\Ch_s\big(\hspace*{-0.3cm}&\mathrm{Ind^{M|B}_H}(\sigma),X\big)\\
&=j_{h,s}^*\int_{T^VM^s|B^s} \Ch_s(\sigma,X)\wedge \hat{A}^2(T^VM^s,X)\wedge D_s(N_s\cap T^VM,X)^{-1}.\end{array}$\\
Moreover, the restriction $j_{h,s}^* : H(B^s,d_X) \rightarrow H(B^h,\mathbb{C})$ is an isomorphism, so we conclude:
$$\Ch_{s}\big(\mathrm{Ind^{M|B}_H}(\sigma),X\big)=\int_{T^VM^s|B^s}\dfrac{\Ch_{s}(\sigma,X)\wedge \hat{A}^2(T^VM^s,X)}{D_s(N_s\cap T^VM,X)}.$$ 
\end{proof}

\begin{remarque} \cite{Atiyah-Segal:II}\label{rem:remplacerGroupe}
To remove the hypothesis that $H$ is a topologically cyclic group generated by a single element, it is enough to replace $H$ by the closure of the group generated by the element $h$. Indeed, if we denote by $\varphi : \tilde{H}=\overline{<h>} \hookrightarrow H$ the inclusion of the closure of the group generated by $h$ in $H$ then the following diagram is commutative: 
$$\xymatrix{ \Khh(T^VM) \ar[r]^{\varphi^*} \ar[d]_{\mathrm{Ind^{M|B}_H}} & \mathrm{K}_{\tilde{\mathrm{H}}}(T^VM) \ar[d]^{\mathrm{Ind}_{\tilde{\mathrm{H}}}^{\mathrm{M}|\mathrm{B}}} \\
\Khh(B) \ar[r]_{\varphi^*} & \mathrm{K}_{\tilde{\mathrm{H}}}(B).}$$
\end{remarque}

\begin{remarque}
If the action of $H$ is trivial on $B$ then we obtain equalities in the de Rham cohomology $H(B,\mathbb{C})$ of $B$ with complex coefficient.
\end{remarque}

\subsection{An application: The homogeneous case}
The purpose of this section is to explain the link between the Berline-Vergne delocalization formula for a $G \times H $-invariant elliptic operator and the formula along the fibers presented in the previous section. We begin by recalling a construction from \cite{Atiyah-Singer:I}. \\
Let $G$ and $H$ be two compact Lie groups. Let $q :P\rightarrow B $ be a  $H$-equivariant $G$-principal bundle. Let $F$ be a $G\times H $-manifold. We define a $H$-equivariant fibration $p:M\rightarrow B$, with fiber $F$ and structural group $G$, by setting $M=P \times_G F $. Let $A:C^{\infty}(F,E^+)\rightarrow C^{\infty}(F,E^-)$ be an elliptic pseudodifferential $G \times H$-invariant operator of order $1$. By \cite{Atiyah-Singer:I}, we know that the index $\mathrm {Ind}_{G\times H} (A)$ of the operator $A$ is an element of $R(G\times H)$. Recall \cite[(4.3) page 504]{Atiyah-Singer:I} the map
$$ \mu_P: R (G \times H) \rightarrow \Khh (B) $$
induced by the map which associates to a $G \times H$-representation $ V $ the vector bundle over $B$ given by $P \times_G V$. We denote respectively $p_1$ and $p_2$ the first and second projections of $P \times F $. Following \cite[page 527]{Atiyah-Singer:I}, we define a $H$-invariant operator  $\tilde{A} $ on $M$, elliptic along the fibers. The operator $ A $ lifts to an operator $\tilde{A}_1 $ on $ P\times F $. Since $ \tilde{A} _1 $ is $ G \times H $-invariant, it induces an operator $ \tilde{A} $ on $M$. We restrict $ \tilde{A}_1 $ to the constant sections along the fibers of $ P\times F \rightarrow M $. Since $ P $ is locally a product, the restriction $ \tilde{A}_V $ of $ \tilde{A}$ to the open sets $ p^{-1}(V) $ is just the lift of $A$, so $\tilde{A}_V\in \overline{\mathcal{P}}^1(p^{-1}(V))$ and therefore $\tilde{A}\in \overline{\mathcal{P}}^1(M)$. The symbol $\sigma(\tilde{A})$ satisfies $\sigma_{(\eta,\xi)}(\tilde{A})=\sigma_{\xi}(A)$ so is elliptic along the fibers of $p$. Moreover, we have the following proposition:

\begin{prop}[\cite{Atiyah-Singer:I}, page 529]
The index of $\tilde{A}$ is given by:
$$\begin{array}{lll}
\mathrm{Ind^{M|B}_H}(\tilde{A})&=[P\times_G \ker A] -[P\times_G \mathrm{coker }~A]\in \Khh(B)\\
&=\mu_P(\mathrm{Ind}^F_{G\times H}(A)).
\end{array}$$
\end{prop}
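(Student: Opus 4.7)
Since $A$ is elliptic on the compact manifold $F$, the spaces $\ker A$ and $\coker A$ are finite-dimensional $G\times H$-modules, hence $\mathrm{Ind}^F_{G\times H}(A) = [\ker A] - [\coker A] \in R(G\times H)$. The strategy is to identify the kernel and cokernel bundles of the family $\tilde{A}$ over $B$ with the associated bundles $P \times_G \ker A$ and $P \times_G \coker A$ respectively, and then invoke the very definition of the map $\mu_P$.

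First I would work locally on $B$. Over any open set $V \subset B$ trivializing the $G$-principal bundle $q : P \to B$, one has $p^{-1}(V) \simeq V \times F$ and, by the very construction recalled above, $\tilde{A}_{|V}$ restricts on each fiber to $A$ itself. Consequently $\ker \tilde{A}_b \simeq \ker A$ and $\coker \tilde{A}_b \simeq \coker A$ as $H$-modules for every $b \in V$. In particular, the dimensions of kernel and cokernel are locally (and therefore globally) constant, so $(\ker \tilde{A}_b)_{b\in B}$ and $(\coker \tilde{A}_b)_{b \in B}$ assemble into genuine $H$-equivariant vector bundles on $B$, and the analytical index $\mathrm{Ind^{M|B}_H}(\tilde{A})$ of Definition \ref{def:indice:analytique} coincides with the honest difference $[\ker \tilde{A}] - [\coker \tilde{A}] \in \Khh(B)$.

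Next, I would upgrade this to a global, trivialization-free identification. Sections of a $G\times H$-equivariant vector bundle over $M = P \times_G F$ correspond bijectively to $G$-invariant sections of its pullback to $P \times F$, and by construction the lift $\tilde{A}_1$ of $A$ to $P \times F$ is $G\times H$-equivariant and acts only in the $F$-direction. Hence a section in the kernel of the family over $b \in B$ lifts to a $G$-equivariant map $q^{-1}(b) \to \ker A$, which is precisely a section of the associated bundle $(P \times_G \ker A)_b$. This identification is natural in $b$ and $H$-equivariant, so it globalizes to an isomorphism $\ker \tilde{A} \simeq P \times_G \ker A$ of $H$-equivariant vector bundles on $B$, and symmetrically for the cokernel. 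Since $\mu_P(V) = [P \times_G V]$ by definition, we conclude
$$\mathrm{Ind^{M|B}_H}(\tilde{A}) = [P \times_G \ker A] - [P \times_G \coker A] = \mu_P([\ker A] - [\coker A]) = \mu_P(\mathrm{Ind}^F_{G\times H}(A)).$$

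The main obstacle is bookkeeping rather than depth: one must verify that the pointwise isomorphism $\ker \tilde{A}_b \simeq (P \times_G \ker A)_b$ assembles into a continuous isomorphism of $H$-equivariant vector bundles on $B$. This reduces to checking that, in overlapping local trivializations of $P$, the two descriptions of the kernel bundle glue via the same $G$-valued cocycle, which is automatic from the $G$-equivariance of $\tilde{A}_1$ on $P \times F$.
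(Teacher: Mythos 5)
Your argument is correct and is exactly the one this statement rests on: the paper gives no proof of this proposition, citing only Atiyah--Singer I, p.~529, and the argument there is precisely yours --- local triviality of $P$ makes $\tilde A$ locally a constant family with fiber operator $A$, so $\dim\ker\tilde A_b$ and $\dim\coker\tilde A_b$ are constant, the kernels and cokernels form genuine bundles, and the associated-bundle description of sections over $M_b=q^{-1}(b)\times_G F$ identifies them with $P\times_G\ker A$ and $P\times_G\coker A$, whence the formula by definition of $\mu_P$. One phrasing quibble only: the individual fibers $\ker\tilde A_b$ are not $H$-modules (since $H$ moves the points of $B$); the correct statement, which you do give at the end, is that the identification is $H$-equivariant over $B$, making $\ker\tilde A\simeq P\times_G\ker A$ an isomorphism of $H$-equivariant vector bundles.
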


Let $s\in H$. We denote again by $\mathfrak{h}$ the Lie algebra of $H$ and by $\mathfrak{h}(s)$ the Lie algebra of the centralizer $H(s)$ of $s$ in $H$.

\begin{cor}\label{cor:Ch(indm)=Ch(muPind)}
Let $X\in \mathfrak{h}(s)$. We have the following equality in $H(B^s,d_X)$:
$$\Ch_s(\mathrm{Ind^{M|B}_H}(\tilde{A}),X)=\Ch_s( \mu_P(\mathrm{Ind}^F_{G\times H}(A)),X).$$
\end{cor}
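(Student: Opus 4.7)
The plan is to deduce this corollary directly from the Atiyah--Singer identification $\mathrm{Ind^{M|B}_H}(\tilde{A}) = \mu_P(\mathrm{Ind}^F_{G\times H}(A))$ in $\Khh(B)$ recalled in the proposition just above, combined with the functoriality of the $s$-equivariant Chern character on equivariant $K$-theory. The two sides of the claimed equation are then nothing but the images of two equal $K$-theory classes under one and the same map, so the identity is essentially tautological.

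First I would recall that $\Ch_s(\cdot,X)$ descends to a well-defined map
\begin{equation*}
\Ch_s(\cdot,X) : \Khh(B) \longrightarrow \mathcal{H}^{\infty}_{H(s)}(\mathfrak{h}(s),B^s),
\end{equation*}
sending the class of an $H$-equivariant (virtual) bundle to the cohomology class of $\Str(s^E\, e^{-F_{\mathfrak{h}}(X)}_{|B^s})$ computed from any choice of $H$-invariant superconnection on a representative. Independence from the superconnection, the hermitian structure and the chosen representative of the $K$-theory class follows by the standard transgression argument already invoked in the proof of Theorem \ref{thm:chern:paradan}: two admissible choices are joined by a smooth family whose transgression produces an exact equivariant form on $B^s$.

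Second I would apply this functorial map to both sides of the Atiyah--Singer identity. The left-hand side gives $\Ch_s(\mathrm{Ind^{M|B}_H}(\tilde{A}),X)$, while the right-hand side gives $\Ch_s(\mu_P(\mathrm{Ind}^F_{G\times H}(A)),X)$, yielding the stated equality in $H(B^s,d_X)$ after using that the isomorphism $H(B^s,d_X) \simeq H(B^{se^X},\mathbb{C})$ is compatible with $\Ch_s$.

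There is therefore no genuine obstacle here: all the substantive content is packed into the Atiyah--Singer identification of the two $K$-theory classes, and the cohomological statement is merely its image under the functorial map $\Ch_s(\cdot,X)$. The usefulness of the corollary in the sequel lies in the fact that Lemma \ref{lem : Ch_g=Chchi} then allows one to expand $\Ch_s(\mu_P(\mathrm{Ind}^F_{G\times H}(A)),X)$ in terms of the characters $\chi_{\ker A}(se^X)$ and $\chi_{\coker A}(se^X)$ of the finite-dimensional $G\times H$-representations, thereby converting the $K$-theoretic equality into an explicit Berline--Vergne type formula on $B^s$ that will match the integral expression provided by Theorem \ref{thm:BV:famille:elliptique}.
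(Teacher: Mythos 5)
Your proposal is correct and matches the paper, which in fact offers no separate proof of this corollary: it is treated as an immediate consequence of the preceding proposition $\mathrm{Ind^{M|B}_H}(\tilde{A})=\mu_P(\mathrm{Ind}^F_{G\times H}(A))$ in $\Khh(B)$, obtained by applying the well-defined map $\Ch_s(\cdot,X):\Khh(B)\to \mathcal{H}^{\infty}_{H(s)}(\mathfrak{h}(s),B^s)$ to both sides, exactly as you do. Your additional remarks on independence of the super-connection and on the downstream use via Lemma \ref{lem : Ch_g=Chchi} are accurate but not needed for the statement itself.
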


In the following, $CW_\g $ will equally denote the Chern-Weil homomorphism $C^{\infty}(\mathfrak{h} \times \g )^{H\times G} \rightarrow \mathcal{H}^{\infty}_H(\mathfrak{h} ,B)$ and the Chern-Weil isomorphism $\mathcal{H}^{\infty}_{H\times G}( \mathfrak{h} \oplus \g, P)\rightarrow \mathcal{H}^{\infty}_H(\mathfrak{h}, B)$. We denote by $ \theta \in (\mathcal{A}^{1}(P)\otimes \g)^{G \times H}$ a $H$-invariant $1$-form connection on $P$ and $ \Theta=d_{\mathfrak{h}}\theta +\frac{1}{2}[\theta,\theta] $ its curvature. We denote by $ \Theta(X) = \Theta - \iota(X) \theta \in \mathcal{A}^{\infty}_H(\mathfrak{h},P)\otimes \mathfrak{g} $ its equivariant curvature. Let $(X_j)$ be a basis of $\g$ and denote by $(x_j)$ its dual basis. 
Write $\theta =\sum \limits_j\theta^j\otimes X_j$ and $\Theta =\sum \limits \Theta^j \otimes X_j$. Let $h= \prod \limits_j (I-\theta^j\otimes \iota(X_j)) : \mathcal{A}^\infty_{H}(\mathfrak{h},P) \rightarrow \mathcal{A}^{\infty}_H(\mathfrak{h},B) $ be the horizontal projection. If $\phi \in C^\infty(\g)$ then using the Taylor formula we define $\phi(\Theta)$ by
$$\phi(\Theta)(X)=\sum \limits_{J\subset \{1,\cdots ,\dim \g\}}\dfrac{\Theta^J}{J!}(\partial_J\phi)(\iota(X)\theta),\ X\in \mathfrak{h},$$
where $\Theta^J=\prod\limits_{j\in J}\Theta^j$ and $\partial_J=\prod \limits_{j\in J}\dfrac{\partial}{\partial x^j}$.
The Chern-Weil isomorphism is given by $CW_\g(\alpha )(X)=h(\alpha (X,\Theta(X)))$, $\forall \alpha \in \mathcal{H}^{\infty}_{H\times G}(\mathfrak{h}\oplus \g, P)$ and $X\in \mathfrak{h}$. For more details, see \cite{DV:comoEquiDescente} (see also \cite{Guillemin:Sternberg:SupersymEquiv,meinrenken2006equivariant}). We have:

\begin{prop}\label{prop:Chern-Weil}
The following diagram is commutative:
$$\xymatrix{ R(H(s)\times G) \ar[r]^{\mu_P} \ar[d]_{\Ch_s=\chi_{_-}(se^{- })} & \k_{\mathrm{H(s)}}(B^s)\ar[d]^{\Ch_s}\\
C^{\infty}(\mathfrak{h}(s) \times \mathfrak{g})^{H(s)\times G} \ar[r]_{CW} & \mathcal{H}^{\infty}_{H(s)}(\mathfrak{h}(s),B^s).
} $$

\end{prop}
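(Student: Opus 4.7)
The proof is a direct computation in equivariant Chern--Weil theory.

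By $\mathbb{Z}$-linearity of both horizontal and vertical maps in the diagram, it suffices to consider an irreducible $V = V_1 \otimes V_2$ with $V_1 \in \widehat{H(s)}$ and $V_2 \in \widehat{G}$, viewed as an irreducible representation of $H(s) \times G$. Then $\mu_P(V) \simeq \underline{V_1} \otimes (P \times_G V_2)$ as $H(s)$-equivariant bundles on $B^s$, where $\underline{V_1} = B^s \times V_1$ is the trivial bundle carrying the $H(s)$-representation $V_1$. The $s$-equivariant Chern character and $CW_\g$ are multiplicative under this splitting, so the verification reduces to two cases. The case $V = V_1$ is immediate: both sides produce the constant function $\chi_{V_1}(se^X)$ on $B^s$. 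The remaining case is $V = V_2 \in \widehat{G}$.

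Fix an $(H \times G)$-invariant $G$-connection $\theta$ on $P$ with equivariant curvature $\Theta(X) = \Theta - \iota(X^*_P)\theta$. The connection $\theta$ induces an $H$-invariant connection $\nabla^E$ on $E := P \times_G V_2$. Identifying sections of $E$ with $G$-equivariant $V_2$-valued functions on $P$, a standard computation yields the $H(s)$-equivariant curvature of $\nabla^E$ on $P$:
\[ F^E_{\mathfrak{h}}(X)\big|_P \;=\; \rho_{V_2}^\g\bigl(\Theta(X)\bigr), \qquad X \in \mathfrak{h}(s). \]
For each $b \in B^s$ and $p \in P|_b$, write $s\cdot p = p\cdot \tau(p)$ with $\tau(p) \in G$; then $\tau$ satisfies $\tau(pg) = g^{-1}\tau(p)g$, and the fibre action $s^E|_p$ is $\rho_{V_2}(\tau(p)) \in \End(V_2)$. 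Combining these ingredients, the $s$-equivariant Chern character on $B^s$ becomes
\[
\Ch_s(\nabla^E)(X)\big|_b \;=\; h\bigl(\chi_{V_2}(\tau(p)\, e^{-\Theta(X)|_p})\bigr),
\]
a basic horizontal $G$-invariant form on $P|_{B^s}$ descending to $B^s$.

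The main step -- and the principal technical obstacle -- is the identification of this expression with $CW_\g(\chi_{V_2}(e^{-\cdot}))(X) = h(\chi_{V_2}(e^{-\Theta(X)|_p}))$. This rests on the $s$-invariance of the connection $\theta$, by which the gauge transformation $\tau : P|_{B^s} \to G$ can be absorbed into the equivariant curvature in basic de Rham cohomology, together with the class-function property of $\chi_{V_2}$. Once this identification is established, combining it with the trivial case $V = V_1$ via multiplicativity yields the commutativity of the diagram.
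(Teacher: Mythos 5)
The paper states this proposition without proof (it is asserted and then immediately applied, with the Chern--Weil formalism merely referenced), so there is no argument of the author's to compare yours against; your proposal has to stand on its own. The reductions you make are sound: linearity, the splitting $V=V_1\otimes V_2$, the case of $V_1$, the computation of the equivariant curvature of $P\times_G V_2$ as $\rho_{V_2}(\Theta(X))$, and the identification of $s^E$ with $\rho_{V_2}(\tau(p))$ via $s\cdot p=p\cdot\tau(p)$ are all correct. But the step you yourself call the main step and the principal technical obstacle is exactly where the argument stops: you assert that $\tau$ ``can be absorbed into the equivariant curvature in basic de Rham cohomology'' and conclude with ``once this identification is established''. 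Since that identification \emph{is} the proposition in the only nontrivial case, nothing has actually been proved.

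Moreover, the identification $h\bigl(\chi_{V_2}(\tau(p)\,e^{-\Theta(X)})\bigr)=h\bigl(\chi_{V_2}(e^{-\Theta(X)})\bigr)$ cannot hold in the stated generality. The class-function property of $\chi_{V_2}$ together with $\tau(pg)=g^{-1}\tau(p)g$ only shows that the left-hand side is a basic form; it does not delete $\tau$. Compare degree-zero components at $X=0$: the left-hand side is the locally constant function $b\mapsto\chi_{V_2}(\tau(p))=\Tr\bigl(s^{E}|_{E_b}\bigr)$, the right-hand side is the constant $\dim V_2$, and closed $0$-forms represent distinct cohomology classes unless they are equal, so the two classes differ whenever $s$ acts nontrivially on the fibres of $P\times_GV_2$ over $B^s$ (take $B=\{\ast\}$, $P=G$, $H$ acting on $P$ by left translations through a homomorphism $\phi$ with $\phi(s)$ acting nontrivially on $V_2$). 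What your computation actually establishes is the $\tau$-twisted formula $\Ch_s(\mu_P(V))(X)=h\bigl(\chi_V(se^{X},\tau(p)\,e^{\Theta(X)})\bigr)$. The untwisted diagram of the proposition therefore commutes only under the additional hypothesis that $\tau$ is (conjugate to) an element acting trivially on $V$ --- for instance, that $s$ acts trivially on $q^{-1}(B^s)$ --- or after replacing $CW$ by its $\tau$-twisted version as in the descent formalism of Duflo and Vergne. You should either prove that $\tau$ is trivial in the situation at hand or record the twisted statement instead.
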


Applying this proposition and the Berline-Vergne formula \cite{BV:formuleloc:Kirillov}, we get:

\begin{cor}\label{cor:Ch(indm)parBV}
Let $X\in \mathfrak{h}(s)$ small enough. We have the following equality in $H(B^s,d_X)$:
$$\Ch_s(\mathrm{Ind^{M|B}_H}(\tilde{A}),X)=\int_{TF^s} \dfrac{\Ch_s(\sigma(A),X,\Theta (X))\wedge \hat{A}^2(TF^s,X,\Theta(X))}{D(\mathcal{N}(F^s,F),X,\Theta(X))},$$
where $\Ch_{(s,e)}(\sigma(A),X,Y) \in \mathcal{H}^\infty_{H(s)\times G}(\mathfrak{h}(s)\times \g,TF)$ is the $s$-equivariant Chern character of $\sigma(A)$.
\end{cor}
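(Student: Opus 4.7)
The strategy is to combine Corollary \ref{cor:Ch(indm)=Ch(muPind)}, the commutative square of Proposition \ref{prop:Chern-Weil}, and the Berline–Vergne delocalized index theorem \cite{BV:formuleloc:Kirillov} for the compact $G\times H$-manifold $F$ with the elliptic operator $A$.

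First I would apply Corollary \ref{cor:Ch(indm)=Ch(muPind)} to replace $\Ch_s(\mathrm{Ind^{M|B}_H}(\tilde{A}),X)$ by $\Ch_s(\mu_P(\mathrm{Ind}^F_{G\times H}(A)),X)$. Next, viewing $V:=\mathrm{Ind}^F_{G\times H}(A)\in R(H(s)\times G)$ and applying Proposition \ref{prop:Chern-Weil}, the right-hand side equals $CW_{\g}(\phi)(X)$, where $\phi\in C^{\infty}(\mathfrak{h}(s)\times\g)^{H(s)\times G}$ is the character function
$$\phi(X,Y)=\chi_{V}(se^{X},e^{Y}),\qquad X\in\mathfrak{h}(s),\ Y\in\g,$$
defined for $X,Y$ in a small enough neighborhood of the origin.

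The second main ingredient is the Berline–Vergne delocalized cohomological index formula for the $G\times H$-invariant elliptic operator $A$ on $F$. Applied at the element $(s,e)\in H\times G$, for $(X,Y)\in\mathfrak{h}(s)\times\g$ sufficiently close to $0$, the fixed-point set of $(se^{X},e^{Y})$ reduces (for $X,Y$ small) to $F^{s}$, and the formula reads
$$\phi(X,Y)=\int_{TF^{s}}\dfrac{\Ch_{(s,e)}(\sigma(A),X,Y)\wedge\hat{A}^{2}(TF^{s},X,Y)}{D_{s}(\mathcal{N}(F^{s},F),X,Y)},$$
where every factor in the integrand is an element of $\mathcal{H}^{\infty}_{H(s)\times G}(\mathfrak{h}(s)\oplus\g,TF)$. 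Plugging this into Step~2 yields the claimed equality provided one can commute $CW_{\g}$ with the fiber integral and verify that $CW_{\g}$ sends each equivariant characteristic class on the fiber $F$ to its counterpart on the associated bundle.

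The last step is the verification alluded to above, which constitutes the main obstacle. By the Chern–Weil construction recalled in the paragraph preceding Proposition \ref{prop:Chern-Weil}, $CW_{\g}$ replaces the parameter $Y\in\g$ by the equivariant curvature $\Theta(X)$ of the $H$-invariant connection $\theta$ on $P$ and projects horizontally. Since $M=P\times_{G}F$, one has canonical identifications of $G$-equivariant bundles on $F$ with their associated bundles on $M$: $TF$ corresponds to $T^{V}M$, $TF^{s}$ to $T^{V}M^{s}$, and $\mathcal{N}(F^{s},F)$ to $(N_{s}\cap T^{V}M)$. Under these identifications, the Chern–Weil substitution $Y\mapsto\Theta(X)$ transforms the $G$-equivariant $\hat{A}$-genus, the class $D_{s}$ of a normal bundle, and the $s$-equivariant Chern character of $\sigma(A)$ into the corresponding $H(s)$-equivariant characteristic classes on the associated bundles over $B^{s}$; this is the standard statement that Chern–Weil intertwines the moment map on the fiber with the equivariant curvature on the base (see e.g.\ \cite{DV:comoEquiDescente}). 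Moreover $CW_{\g}$ commutes with $\int_{TF^{s}}\!|B^{s}$ because local triviality of $P\to B$ writes the associated bundle locally as a product and the substitution acts only on the parameter $Y$, commuting with the fiberwise integration. Combining these two facts turns the Berline–Vergne formula on $F$ into the claimed identity in $H(B^{s},d_{X})$. The main obstacle is thus purely a bookkeeping verification that the Chern–Weil homomorphism is compatible, in the equivariant setting with generalized parameter $X\in\mathfrak{h}(s)$, with the four operations of taking equivariant Chern character of a symbol, forming $\hat{A}^{2}$, forming $D_{s}$ of a normal bundle, and integrating along the fiber.
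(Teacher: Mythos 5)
Your proposal follows exactly the paper's route: Corollary \ref{cor:Ch(indm)=Ch(muPind)}, then the Chern--Weil square of Proposition \ref{prop:Chern-Weil}, then the Berline--Vergne delocalized formula for $A$ on $F$ at $(s,e)$ with the substitution $Y\mapsto\Theta(X)$. The only difference is that you spell out the compatibility of $CW_{\g}$ with the equivariant characteristic classes and the fiber integral, which the paper leaves implicit; this is a correct and welcome elaboration, not a divergence.
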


\begin{proof}
By Corollary \ref{cor:Ch(indm)=Ch(muPind)}, we know that 
$$\Ch_s(\mathrm{Ind^{M|B}_H}(\tilde{A}),X)=\Ch_s( \mu_P(\mathrm{Ind}^F_{G\times H}(A)),X).$$
By Proposition \ref{prop:Chern-Weil}, it follows that:
$$\Ch_s(\mathrm{Ind^{M|B}_H}(\tilde{A}),X)=CW\big(\mathrm{Ind}^F_{G\times H}(A)\big)(X).$$
So we get the result by applying the Berline-Vergne formula \cite{BV:formuleloc:Kirillov} to $\mathrm{Ind}^F_{G\times H}(A)$.

\end{proof}

Let $s\in H$. We have $T^VM = P\times_G TF$ and $T^VM^s=q^{-1}(B^s)\times_G TF^s)$ because the action of $H$ commutes with the action of $G$. Denote by $\mathcal{N}(F^s,F)$ the normal bundle of $F^s$ in $F$. The vertical part of the normal bundle $M^s=q^{-1}(B^s)\times_G F^s$ in $M$ is given by $\mathcal{N}(M^s,M)\cap T^VM=q^{-1}(B^s)\times_G \mathcal{N}(F^s,F)$. 
Denote by $p_1 : q^{-1}(B^s)\times F^s \rightarrow q^{-1}(B^s)$ and $p_2 : q^{-1}(B^s)\times F^s \rightarrow F^s$ the projections. The $1$-form $p_1^*\theta$ is a connection on $P\times F \rightarrow P\times_GF$ which restrict to a connection on $q^{-1}(B^s)\times F^s\rightarrow q^{-1}(B^s)\times_GF^s$. We denote by $CW_{q^{-1}(B^s)\times F^s}$ the Chern-Weil isomorphism associated to the bundle $q^{-1}(B^s)\times H ^s \rightarrow q^{-1}(B^s)\times_G F^s$. We have:
\noindent
\begin{prop}\label{prop:egalite:classes:A-genre:D:Chern} We have the following equalities:
\begin{enumerate}

\item $\hat{A}(q^{-1}(B^s)\times_G TF^s,X)=p_2^*\hat{A}(TF^s,X,p^*_1\Theta(X))$;
\item $D(\mathcal{N}(M^s,M)\cap T^VM,X)=p_2^*D(\mathcal{N}(F^s,F),X,p_1^*\Theta(X))$;
\item $\Ch_s(\sigma(\tilde{A}),X)=p_2^*\Ch_s(\sigma(A),X,p_1^*\Theta(X))$.
\end{enumerate}
\end{prop}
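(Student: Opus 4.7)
The three identities share a common form: each asserts that an equivariant characteristic class on the associated bundle $q^{-1}(B^s) \times_G (-) \to q^{-1}(B^s) \times_G F^s$ equals, after pullback by $p_2$, the corresponding class on $F^s$ with the auxiliary $\g$-variable substituted by $p_1^*\Theta(X)$. This is nothing but the equivariant Chern--Weil substitution for associated bundles. My plan is to reduce all three identities to a single general statement about the induced connection on an associated bundle, and then apply the three relevant invariant polynomials.

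First I would fix, once and for all, any $G \times H(s)$-equivariant vector bundle $V \to F^s$ together with a $G$-invariant connection $\nabla^V$ (and, when needed, a fiberwise action $s^V$ commuting with $G$, which is automatic since $G$ and $H$ commute). The connection $\theta$ and the connection $\nabla^V$ induce a connection $\widetilde{\nabla}^V$ on the associated bundle $q^{-1}(B^s) \times_G V \to q^{-1}(B^s) \times_G F^s$. A standard Chern--Weil computation (see \cite{BGV,DV:comoEquiDescente}) shows that when the quotient map $\pi : q^{-1}(B^s) \times F^s \to q^{-1}(B^s) \times_G F^s$ is used to pull back, the $H(s)$-equivariant curvature of $\widetilde{\nabla}^V$ becomes horizontal and reads
\[
\pi^* F^{\widetilde{\nabla}^V}(X) \;=\; p_2^* F^{\nabla^V}(X, p_1^*\Theta(X)),
\]
where $F^{\nabla^V}(X, Y)$ denotes the $(H(s) \times G)$-equivariant curvature of $\nabla^V$, regarded as an element of $\mathcal{A}^\infty_{H(s) \times G}(\mathfrak{h}(s) \oplus \g, F^s)\otimes \End(V)$. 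This is exactly the Chern--Weil rule $CW_\g$ at the level of curvatures.

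Then each of the three equalities follows by applying a $(G\times\{s\})$-invariant polynomial to this formula and using that $\pi$ is surjective:
\begin{itemize}
\item For (1), take $V = TF^s$ with its natural $G$-action and apply the $\hat{A}$-polynomial.
\item For (2), take $V = \mathcal{N}(F^s, F)$ with the fiberwise automorphism $s^V$ induced by $s$, and apply $\det\bigl(1 - s^V \exp(\cdot)\bigr)$ to the equivariant curvature.
\item For (3), take $V = E^\pm_{|TF^s}$ together with the $G \times H(s)$-equivariant morphism $\sigma(A)$; the $s$-equivariant Chern character $\Ch_s$ is a universal functional of the equivariant curvature and of $s^V$, and the identity of $\sigma(\tilde{A})$ with the associated-bundle construction applied to $\sigma(A)$ is built into the construction of $\tilde{A}$.
\end{itemize}
In all three cases $G$-invariance and horizontality allow the identity obtained on the cover $q^{-1}(B^s) \times F^s$ to descend to $q^{-1}(B^s) \times_G F^s$, giving the stated formulas.

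The main obstacle is the careful verification of the curvature formula for $\widetilde{\nabla}^V$ in its $H(s)$-equivariant form: one must check that the horizontal projection $h$ built into $CW_\g$ matches the horizontal decomposition induced by $\theta$ on the associated bundle, and that the moment contribution of $\Theta(X)$ is exactly the term arising from the $G$-action on $V$. Once this is established, the three statements are formally the same assertion applied to the three bundles (and the symbol) at hand.
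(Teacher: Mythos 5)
Your proof is correct and takes essentially the same approach as the paper: the paper also reduces all three identities to the Chern--Weil isomorphism for the associated bundle, combined with the observation that the bundle over $q^{-1}(B^s)\times F^s$ is the pullback by $p_2$ of the corresponding bundle over $F^s$. Your curvature-level identity $\pi^*F^{\widetilde{\nabla}^V}(X)=p_2^*F^{\nabla^V}(X,p_1^*\Theta(X))$ is precisely what the paper's application of $CW_{q^{-1}(B^s)\times F^s}$ with equivariant curvature $p_1^*\Theta(X)$ encodes.
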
 

\begin{proof}
We only give the details of the proof of the first equality. Assertions $2$ and $3$ can be shown in the same way.\\
We have:
$$\hat{A}(q^{-1}(B^s)\times_G TF^s,X)=CW_{q^{-1}(B^s)\times F^s}\big(\hat{A}(q^{-1}(B^s)\times TF^s,-,-)\big)(X).$$
As $q^{-1}(B^s)\times TF^s$ is the pullback by $p_2$ of $TF^s$, we obtain:
$$\hat{A}(q^{-1}(B^s)\times_G TF^s,X)=CW_{q^{-1}(B^s)\times F^s}\big(p_2^*\hat{A}( TF^s,-,-)\big)(X).$$
So we get the result by applying the Chern-Weil isomorphism with the equivariant curvature $p^*_1\Theta(X)$.
\end{proof}

We will verify that the formula of index for families coincides with the formula obtained in Corollary \ref{cor:Ch(indm)parBV} by a direct calculation.

\begin{cor}
Let $X\in \mathfrak{h}(s)$ small enough. We have the following equality in $H(B^s,d_X)$:
$$\Ch_s(\mathrm{Ind^{M|B}_H}(\tilde{A}),X)=\int_{TF^s} \dfrac{\Ch_s(\sigma(A),X,\Theta (X))\wedge \hat{A}^2(TF^s,X,\Theta(X))}{D(\mathcal{N}(F^s,F),X,\Theta(X))}.$$
\end{cor}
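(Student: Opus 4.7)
The plan is to apply the Berline--Vergne formula for families (Theorem \ref{thm:BV:famille:elliptique}) directly to the operator $\tilde{A}$ and then to translate each equivariant characteristic class on $T^V M^s$ to the corresponding class on $TF^s$ via the Chern--Weil homomorphism, using Proposition \ref{prop:egalite:classes:A-genre:D:Chern}. Recall that $M^s = q^{-1}(B^s)\times_G F^s$ and $T^V M^s = q^{-1}(B^s)\times_G TF^s$, so that $\mathcal{N}(M^s,M)\cap T^V M = q^{-1}(B^s)\times_G \mathcal{N}(F^s,F)$. Theorem \ref{thm:BV:famille:elliptique} applied to $\tilde{A}$ then reads
$$\Ch_s(\mathrm{Ind^{M|B}_H}(\tilde{A}),X)=\int_{T^VM^s|B^s}\dfrac{\Ch_s(\sigma(\tilde{A}),X)\wedge \hat{A}^2(T^VM^s,X)}{D_s(\mathcal{N}(M^s,M)\cap T^VM,X)}$$
as an equality in $H(B^s, d_X)$.

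Next, Proposition \ref{prop:egalite:classes:A-genre:D:Chern} expresses each of the three factors in the integrand as the $p_2$-pullback of a form on $TF^s$ after substituting $p_1^*\Theta(X)$ for the $\g$-variable, i.e.\ as the image under the Chern--Weil homomorphism $CW_{q^{-1}(B^s)\times F^s}$ built from the connection $p_1^*\theta$. It thus remains to commute integration along the fibers of $T^V M^s\to B^s$ with $CW$: for any $H(s)\times G$-equivariant form $\alpha$ on $\mathfrak{h}(s)\oplus\g$ with values in $\mathcal{A}(TF^s)$ with fiberwise compact support, one has
$$\int_{T^VM^s|B^s} CW_{q^{-1}(B^s)\times F^s}(\alpha) = CW\Bigl(\int_{TF^s}\alpha\Bigr)$$
in $\mathcal{H}^\infty_{H(s)}(\mathfrak{h}(s), B^s)$. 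Applied to the integrand produced above, this identity yields precisely the right-hand side of the corollary.

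The main obstacle is to justify this last exchange. The key observation is that the horizontal projection $h=\prod_j(I-\theta^j\otimes\iota(X_j))$ defining $CW_{q^{-1}(B^s)\times F^s}$ acts only through the connection form $p_1^*\theta$, which is pulled back from $q^{-1}(B^s)$ and therefore commutes with integration in the $TF^s$-direction; likewise, the Taylor substitution $\phi\mapsto \phi(p_1^*\Theta(X))$ is polynomial in $p_1^*\Theta$ with $TF^s$-horizontal coefficients. Once this commutation is granted, the rest of the argument is the direct substitution of the three identities of Proposition \ref{prop:egalite:classes:A-genre:D:Chern} into the formula of Theorem \ref{thm:BV:famille:elliptique}.
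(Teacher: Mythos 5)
Your proposal is correct and follows essentially the same route as the paper: apply Theorem \ref{thm:BV:famille:elliptique} to $\tilde{A}$, substitute the three identities of Proposition \ref{prop:egalite:classes:A-genre:D:Chern}, and then reduce the fiber integral over $T^VM^s|B^s$ to an integral over $TF^s$. The paper justifies this last reduction by pulling back along the quotient map $k:q^{-1}(B^s)\times TF^s\to q^{-1}(B^s)\times_G TF^s$ and observing that the integrand is a $p_2^*$-pullback, which is the same computation you package as the commutation of the Chern--Weil homomorphism with fiberwise integration.
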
 

\begin{proof}
We start by applying Theorem \ref{thm:BV:famille:elliptique} to compute the Chern character of $\mathrm{Ind^{M|B}_H}(\tilde{A})$. We have:
$$\Ch_s(\mathrm{Ind^{M|B}_H}(\tilde{A}),X)=\int_{q^{-1}(B^s)\times_G TF^s|B^s} \dfrac{\Ch_s(\sigma(\tilde{A}),X)\wedge \hat{A}^2(q^{-1}(B^s)\times_G TF^s,X)}{D(q^{-1}(B^s)\times_G \mathcal{N}(F^s,F),X)}.$$
By Proposition \ref{prop:egalite:classes:A-genre:D:Chern}, we get:
$$\Ch_s(\mathrm{Ind^{M|B}_H}(\tilde{A}),X)=\int_{q^{-1}(B^s)\times_G TF^s|B^s} \hspace*{-0.8cm}\dfrac{p^*_2\Ch_s(\sigma(A),X,p_1^*\Theta(X))\wedge p^*_2\hat{A}^2(TF^s,X,p^*_1\Theta(X))}{p_2^*D(\mathcal{N}(F^s,F),X,p^*_1\Theta(X))}.$$
Denoting by $k : q^{-1}(B^s)\times TF^s \rightarrow q^{-1}(B^s)\times_G TF^s$ the projection, it follows that:\\
$\begin{array}{lll}
\Ch_s(\hspace*{-0.3cm}&\mathrm{Ind^{M|B}_H}(\tilde{A}),X)\\
&=\displaystyle \int_{q^{-1}(B^s)\times TF^s|B^s} k^*\bigg(\dfrac{p^*_2\Ch_s(\sigma(A),X,p_1^*\Theta(X))\wedge p^*_2\hat{A}^2(TF^s,X,p^*_1\Theta(X))}{p_2^*D(\mathcal{N}(F^s,F),X,p^*_1\Theta(X))}\bigg)\\
&=\displaystyle \int_{TF^s} \dfrac{\Ch_s(\sigma(A),X,\Theta(X))\wedge \hat{A}^2(TF^s,X,\Theta(X))}{D(\mathcal{N}(F^s,F),X,\Theta(X))}.
\end{array}
$
\end{proof}

\section{Berline-Vergne formula for a $G$-transversally elliptic family}

This section is an application of the cohomological formula of section \ref{Chern:caracter:index:class}. We use the Berline-Vergne formula \cite{BV:IndEquiTransversal} and more precisely the Paradan-Vergne approach from \cite{paradan2008index} to show a similar result for families. We assume in this section that $G$ acts trivially on $B$.
\subsection{Vertical deformation of the Chern character}

The Liouville $1$-form on $T^*M$ allows to define by restriction a "vertical Liouville $1$-form".
More precisely, let us fix a riemannian metric $\langle \cdot , \cdot \rangle$ on $M$. Then we can write $TM=T^VM\bigoplus p^*TB$.
Let $\pi : T^*M \rightarrow M $ be the projection, let $j : T^*M \rightarrow T^VM^*$ be the dual map to the inclusion $i : T^VM\hookrightarrow TM$ and let $\phi : T^*M \rightarrow TM$ be the isomorphism given by the metric on $M$ and let $\phi^V : T^VM^* \rightarrow T^VM$ be the induced isomorphism. Denote by $r=\phi^{-1}\circ i \circ \phi^V$ and $k=r \circ j$. The Liouville $1$-form $\omega$ on $T^*M$ is the $1$-form defined by $\langle \omega(\xi ), \zeta \rangle = \langle \xi ,d\pi(\zeta)\rangle$, where $\xi \in T^*M$, $\zeta \in T_\xi(T^*M)$ and $\pi : T^*M \rightarrow M$ is the projection. As before let $G$ be a compact Lie group and $p :M\rightarrow B$ a $G$-equivariant fibration of compact manifolds. Assume $B$ oriented.

\begin{lem}Let $\omega $ be the Liouville $1$-form on $T^*M$. The $1$-form $k^*\omega $ is $G$-invariant and the subspace $C_{k^*\omega}$ of $T^*M$ is equal to $C_\omega =T^*_GM$ (see Section \ref{section:Chern:Character} for the definition of $C_\omega$). 
\end{lem}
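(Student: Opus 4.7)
The plan is to unpack the definition of $k$, verify its equivariance, and then compute $f_{k^*\omega}$ explicitly to compare with $f_\omega$.

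First I would note that the Liouville $1$-form $\omega$ is natural/canonical on $T^*M$, so it is automatically $G$-invariant for the canonical lift of the $G$-action. To get $G$-invariance of $k^*\omega$ it therefore suffices to show that $k = r \circ j$ is $G$-equivariant. All four ingredients building $k$ --- the inclusion $i : T^VM \hookrightarrow TM$ (equivariant since $p$ is $G$-equivariant, so $T^VM = \ker p_*$ is $G$-stable), its dual $j$, and the musical isomorphisms $\phi,\phi^V$ induced by the $G$-invariant Riemannian metric --- are $G$-equivariant, so $k$ is. Hence $k^*\omega$ is $G$-invariant.

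Next I would identify $k$ geometrically. The $G$-invariant metric gives an orthogonal splitting $TM = T^VM \oplus p^*TB$ which, via $\phi$, transfers to a splitting $T^*M = (T^VM)^\flat \oplus (p^*TB)^\flat$; unwinding the definition $k = \phi^{-1}\circ i \circ \phi^V \circ j$ shows that for $\xi \in T^*M$ the element $k(\xi)$ is exactly the ``vertical part'' $\xi_V$, i.e.\ the unique covector which agrees with $\xi$ on $T^VM$ and vanishes on $p^*TB$. In particular $\pi \circ k = \pi$.

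Now I would compute $f_{k^*\omega}$. By definition of the Liouville form, for $\xi \in T^*M$ and $\zeta \in T_\xi(T^*M)$,
$$
\langle (k^*\omega)(\xi),\zeta\rangle
= \langle \omega(k(\xi)), dk(\zeta)\rangle
= \langle k(\xi), d\pi(dk(\zeta))\rangle
= \langle k(\xi), d\pi(\zeta)\rangle,
$$
using $\pi \circ k = \pi$. Applied to $\zeta = X^*_{T^*M}(\xi)$, naturality of the action gives $d\pi(X^*_{T^*M}(\xi)) = X^*_M(\pi(\xi))$, so
$$
f_{k^*\omega}(\xi)(X) = \langle k(\xi), X^*_M(\pi(\xi))\rangle = \langle \xi_V, X^*_M(\pi(\xi))\rangle.
$$
Here is where the hypothesis that $G$ acts trivially on $B$ enters: since $p_* X^*_M = X^*_B = 0$, the fundamental vector field $X^*_M$ is vertical, and therefore $\langle \xi_V, X^*_M\rangle = \langle \xi, X^*_M\rangle = f_\omega(\xi)(X)$. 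Hence $f_{k^*\omega} = f_\omega$, which yields $C_{k^*\omega} = C_\omega$, and the identity $C_\omega = T^*_GM$ is immediate from the same computation applied directly to $\omega$.

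The only real subtlety is step three: correctly identifying $k(\xi) = \xi_V$ from the chain of musical maps, after which everything follows from the very definition of $T^*_GM$ and the triviality of the $G$-action on $B$.
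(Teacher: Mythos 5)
Your proof is correct and follows essentially the same route as the paper's: verify $G$-equivariance of $k$ from the equivariance of its ingredients, use $\pi\circ k=\pi$ to compute $\langle k^*\omega,\zeta\rangle=\langle k(\xi),d\pi(\zeta)\rangle$, and then observe that $X^*_M$ is vertical (since $G$ acts trivially on $B$) so the horizontal part of $\xi$ drops out and $f_{k^*\omega}=f_\omega$. Your explicit identification of $k(\xi)$ as the vertical component $\xi_V$ and your flagging of where the triviality of the action on $B$ is used are nice clarifications of steps the paper leaves implicit.
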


\begin{proof}
The $1$-form $k^*\omega $ is $G$-invariant because $k$ is $G$-equivariant. Let $\xi \in T^*M$ and $v\in T_\xi (T^*M)$. We have:
$$
\langle (k^*\omega)_\xi , v \rangle = \langle \omega_{k(\xi)} , d_\xi k(v) \rangle = \langle k(\xi) , d_{k(\xi)}\pi \circ d_\xi k(v) \rangle .
$$
Furthermore, we have the equality $\pi \circ k =\pi$ so we get:
$$ \langle k^*\omega , v \rangle = \langle k(\xi) , d\pi (v) \rangle . $$
Now if $v=X^{*}_{T^*M}(\xi)$ is given by an element $X\in \g$ then $d\pi(v)=X^{*}_M(\pi(\xi))$ is a vertical vector, that is $d\pi(v) \in T_{\pi(\xi )}^VM$ so $\langle k^*\omega , v \rangle = \langle \omega , X^{*}_{T^*M}(\xi) \rangle $ since the horizontal part of $\xi $ vanish on the vertical vectors. So we get that $C_{k^*\omega}$ is equal to $C_\omega$.
\end{proof}

\begin{cor}\label{cor:egalite:chern}
The Chern character in $\mathcal{H}^{-\infty}_G(\g ,T^*M)$ defined using the Liouville $1$-form $\omega$ is equal to the Chern character defined using the vertical Liouville $1$-form $k^*\omega $. 
\end{cor}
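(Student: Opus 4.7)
My plan is to invoke the homotopy invariance statement at the end of Theorem \ref{thm:chern:paradan}, which tells us that if $(\sigma_\tau, \lambda_\tau)_{\tau\in[0,1]}$ is a smooth family with $C_{s,\lambda_\tau,\sigma_\tau}\subset F$ for a fixed $L$-invariant subspace $F$, then the classes $\Ch_{\sup}(\sigma_\tau,\lambda_\tau,s)$ all coincide. I will take $\sigma_\tau=\sigma$ constant and interpolate linearly between the two $1$-forms, setting
\[
\lambda_\tau=(1-\tau)\omega+\tau\, k^*\omega,\qquad \tau\in[0,1].
\]
Since both $\omega$ and $k^*\omega$ are $G$-invariant (the invariance of $k^*\omega$ being proved in the preceding lemma), each $\lambda_\tau$ is a $G$-invariant $1$-form on $T^*M$.

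The crucial step, which is where the work really lies, is to verify that the subset $C_{\lambda_\tau,\sigma}$ does not move with $\tau$. By definition of $f_{\lambda_\tau}$, for any $\xi\in T^*M$ and $X\in\g$,
\[
\langle f_{\lambda_\tau}(\xi),X\rangle
=(1-\tau)\langle \omega(\xi),X^*_{T^*M}(\xi)\rangle+\tau\langle (k^*\omega)(\xi),X^*_{T^*M}(\xi)\rangle.
\]
The computation inside the proof of the preceding lemma shows that $\langle (k^*\omega)(\xi),X^*_{T^*M}(\xi)\rangle=\langle k(\xi),d\pi(X^*_{T^*M}(\xi))\rangle=\langle k(\xi),X^*_M(\pi(\xi))\rangle$, and because $d\pi(X^*_{T^*M}(\xi))=X^*_M(\pi(\xi))$ is vertical while $k$ keeps the vertical component of $\xi$ unchanged, this equals $\langle\xi,X^*_M(\pi(\xi))\rangle=\langle\omega(\xi),X^*_{T^*M}(\xi)\rangle$. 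Hence $f_{\lambda_\tau}=f_\omega$ for every $\tau$, and consequently
\[
C_{\lambda_\tau}=C_\omega=T^*_GM,\qquad C_{\lambda_\tau,\sigma}=C_\omega\cap\operatorname{supp}(\sigma),
\]
both independent of $\tau$.

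With the containment $C_{\lambda_\tau,\sigma}\subset C_{\omega,\sigma}$ being actually an equality for all $\tau$, the homotopy invariance in Theorem \ref{thm:chern:paradan} applies with $F=C_{\omega,\sigma}$ and $s=e$, and yields the equality of the classes $\Ch_{\sup}(\sigma,\omega,e)=\Ch_{\sup}(\sigma,k^*\omega,e)$ in $\mathcal{H}^{-\infty}_{C_{\omega,\sigma}}(\g,T^*M)$. Pushing forward under the natural map $\mathcal{H}^{-\infty}_{C_{\omega,\sigma}}(\g,T^*M)\to \mathcal{H}^{-\infty}_G(\g,T^*M)$ gives the corollary. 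The only subtle point I anticipate is checking $\tau$-regularity of the construction so that the family $\lambda_\tau$ genuinely fits the hypotheses of the cited theorem (differentiability in $\tau$, and the fact that a single super-connection $\mathbb{A}$ can be used for the whole family); both are immediate here because the dependence on $\tau$ is affine and only affects the $1$-form, not $\sigma$, $E$, or $\mathbb{A}$.
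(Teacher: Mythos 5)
Your proposal is correct and follows essentially the same route as the paper: both take the constant symbol together with the affine homotopy of $G$-invariant $1$-forms between $\omega$ and $k^*\omega$, check that $C_{\lambda_\tau,\sigma}$ stays inside the fixed closed set $C_{\omega,\sigma}=T^*_GM\cap\operatorname{supp}(\sigma)$, and invoke the homotopy-invariance clause of Theorem \ref{thm:chern:paradan}. Your verification that $f_{\lambda_\tau}=f_\omega$ (using that $X^*_M$ is vertical since $G$ acts trivially on $B$, and that $k(\xi)$ agrees with $\xi$ on vertical vectors) is in fact slightly more complete than the paper's one-line check, but the argument is the same.
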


\begin{proof}
By Theorem 3.19 of \cite{paradan2008equivariant} (see also Theorem \ref{thm:chern:paradan}), we know that if $\sigma_\tau :E^+ \rightarrow E^-$ is a family of smooth $G$-invariant morphisms and $\lambda_\tau$ a family of $G$-invariant $1$-forms such that $C_{\lambda_\tau,\sigma_\tau} \subset F$, for $\tau\in [0,1]$ and $F$ a closed subspace of $T^*M$, then all the classes $\Ch_{\sup } (\sigma_\tau,\lambda_\tau)$ coincide in $\mathcal{H}_F^{-\infty}(\mathfrak{g},N)$. We take for $\sigma_\tau$ the constant family of  morphisms and for $\lambda_\tau$ the family of $1$-forms $\tau\omega +(1-\tau)k^*\omega $. Note that the family $\lambda_\tau$ is $G$-equivariant. We have for $v\in T_\xi(T^*M)$:
$$\langle \lambda_\tau ,v \rangle = \tau \langle \xi  ,d\pi(v) \rangle +(1-\tau)\langle k(\xi ) ,d\pi(v) \rangle $$
so if $v$ is given by an element $X\in \g$ then $\langle \lambda_\tau ,v \rangle =0$ if $\xi \in T^*_GM$. We get that $C_{\lambda_\tau}=C_\omega =C_{k^*\omega }$ which completes the proof. 
\end{proof}

\begin{lem}
Let $\omega $ be the Liouville $1$-form on $T^*M$. The $1$-form $r^*\omega $ is $G$-invariant and the subspace $C_{r^*\omega }$ of $T^VM^*$ is equal to $T^V_GM^*$.

\end{lem}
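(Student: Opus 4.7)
The plan is to mirror the structure of the preceding lemma (for $k^*\omega$) almost verbatim, as the present statement is its vertical analogue. The only genuine work is bookkeeping with the metric identification, and the main (minor) obstacle will be keeping the horizontal/vertical decomposition straight when passing from $r(\eta) \in T^*M$ back to $\eta \in T^VM^*$.

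First, I will dispatch $G$-invariance. Since the chosen Riemannian metric on $M$ is $G$-invariant, both musical isomorphisms $\phi : T^*M \to TM$ and $\phi^V : T^VM^* \to T^VM$ are $G$-equivariant; the inclusion $i: T^VM \hookrightarrow TM$ is $G$-equivariant by construction. Therefore $r = \phi^{-1}\circ i \circ \phi^V$ is $G$-equivariant. Since the Liouville $1$-form $\omega$ on $T^*M$ is canonical, hence $G$-invariant, the pullback $r^*\omega$ is $G$-invariant as well.

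Next, I will compute $\langle(r^*\omega)_\eta, v\rangle$ for $\eta \in T^VM^*$ and $v \in T_\eta(T^VM^*)$ using the very definition of $\omega$ and the crucial relation $\pi \circ r = \pi^V$, where $\pi^V : T^VM^* \to M$ denotes the projection. This yields
$$\langle (r^*\omega)_\eta, v \rangle = \langle \omega_{r(\eta)},\, d_\eta r(v)\rangle = \langle r(\eta),\, d\pi(d_\eta r(v))\rangle = \langle r(\eta),\, d\pi^V(v)\rangle.$$
Specialising to $v = X^*_{T^VM^*}(\eta)$ for $X \in \g$, I use $d\pi^V(X^*_{T^VM^*}(\eta)) = X^*_M(\pi^V(\eta))$ to obtain
$$\langle (r^*\omega)_\eta,\, X^*_{T^VM^*}(\eta)\rangle = \langle r(\eta),\, X^*_M(\pi^V(\eta))\rangle.$$

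Finally, I will identify the right-hand side with the defining condition of $T^V_GM^*$. By definition of $r$, for any $w \in T_{\pi^V(\eta)}M$ decomposed as $w = w^V + w^H$ with respect to the metric splitting $TM = T^VM \oplus p^*TB$, one has
$$\langle r(\eta), w\rangle = \langle \phi^V(\eta),\, w\rangle_M = \langle \phi^V(\eta),\, w^V\rangle_M = \eta(w^V),$$
since $\phi^V(\eta) \in T^VM$ is purely vertical. Applying this to $w = X^*_M(\pi^V(\eta))$ shows that $\eta \in C_{r^*\omega}$ if and only if $\eta$ annihilates the vertical component of $X^*_M$ for every $X \in \g$, which is exactly the characterization (under the metric identification $T^VM \simeq T^VM^*$) of elements of $T^V_GM^* = T^VM^* \cap T_GM$. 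This gives $C_{r^*\omega} = T^V_GM^*$ and completes the proof.
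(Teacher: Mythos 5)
Your proof is correct and follows essentially the same route as the paper's: equivariance of $r$ from the invariant metric gives $G$-invariance of $r^*\omega$, and the pointwise computation via $\pi\circ r=\pi^V$ identifies $f_{r^*\omega}(\eta)(X)$ with the pairing of $r(\eta)$ against $X^*_M$, whence $C_{r^*\omega}=T^V_GM^*$. Your last step, making explicit that $\langle r(\eta),w\rangle=\eta(w^V)$ via the orthogonal splitting, is slightly more careful than the paper's one-line conclusion, but it is the same argument.
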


\begin{remarque}
The $1$-form $r^*\omega$ is the restriction of $\omega $ to $T^VM^*$ that we see as a submanifold of $T^*M$ using the metric.
\end{remarque}

\begin{proof}
The $1$-form $r^*\omega $ is $G$-invariant because $r$ is $G$-equivariant.
Let $\xi \in T^VM^*$. Let $v\in T_\xi(T^VM^*)$. We have:
$$
\langle r^*\omega , v \rangle = \langle \omega_{r(\xi)} , dk(v) \rangle = \langle r(\xi) , d\pi \circ dr(v) \rangle =\langle \xi , d\pi(v) \rangle.
$$
because $r$ is the inclusion of $T^VM^*$ in $T^*M$ and $\pi \circ r$ is the projection $\pi$ restricted to $T^VM^*$. From this we deduce that the map $f_{r^*\omega } : T^VM^* \rightarrow \g^*$ is zero if $\xi \in T^V_GM^*$.
\end{proof}

\subsection{Berline-Vergne formula}

We begin by recalling the cohomological formula from \cite{paradan2008index} for a $G$-transversally elliptic operator. We will deduce from this formula, with the help of a Kasparov product, the formula for a family of $G$-transversally elliptic operators. We denote by $\omega_s $ the Liouville $1$-form on $T^*M^s$ and we use the notation of the previous section. In this section, we use evaluation of generalized functions, see Section \ref{section:equi:coh:coefgen} and \cite{paradan2008index,kumar1993equivariant} for more details.

\begin{thm}[\cite{paradan2008index}, Theorem 3.18]\label{thm:Ind:BV}
Let $\sigma$ be a symbol of $G$-transversally elliptic operator on a compact $G$-manifold $M$. Denote for any  $s\in G $, by $N^s$ the normal vector bundle to $M^s$ in $M$. There is a unique $G$-invariant generalized function on $G$ denoted $\mathrm{Ind}^{G,M}_{coh}([\sigma ])$, such that the following local relations are satisfied: 
$$\mathrm{Ind}^{G,M}_{coh}([\sigma ])\|_s(Y)
=(2i\pi)^{-\dim M^s}\bigint_{T^*M^s}\dfrac{\Ch_{c}(\sigma,\omega,s)(Y)\wedge \hat{A}^2(TM^s,Y)}{D_s(N^s,Y)},$$
$\forall s\in G$ and $\forall Y \in\g(s)$ small enough so that the equivariant classes $\hat{A}^2(TM^s,Y)$ and $D(N^s,Y)$ are defined. 
Moreover, the generalized function $\mathrm{Ind}^{G,M}_{coh}([\sigma ])$ only depends on the class of $\sigma$ in $\K(T^*_GM)$. 
\end{thm}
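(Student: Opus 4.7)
The plan is to assemble $\mathrm{Ind}^{G,M}_{coh}([\sigma])$ from local pieces via the Duflo-Vergne descent theorem quoted after Lemma~2.7. For each $s \in G$, I would first check that the formula
$$\theta_s(Y) := (2i\pi)^{-\dim M^s}\int_{T^*M^s} \dfrac{\Ch_c(\sigma,\omega,s)(Y)\wedge \hat{A}^2(TM^s,Y)}{D_s(N^s,Y)}$$
defines a genuine element of $C^{-\infty}(U_s(0))^{Ad(G(s))}$ on a suitable $G(s)$-invariant neighbourhood $U_s(0)\subset\g(s)$ of the origin. The classes $\hat{A}^2(TM^s,Y)$ and $D_s(N^s,Y)^{-1}$ are smooth equivariant forms on such a neighbourhood by the standard estimates of \cite{BV:formuleloc:Kirillov,BGV}, and $\Ch_c(\sigma,\omega,s)$ is an equivariantly closed compactly supported form with generalized coefficients by Theorem \ref{thm:chern:paradan}, so the integrand is a form with generalized coefficients that is compactly supported in the fibre direction, and the fibrewise integration yields a legitimate invariant generalized function.

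Next I would verify the two hypotheses of the descent theorem for the family $\{\theta_s\}_{s\in G}$. Invariance $\theta_s = \theta_{ksk^{-1}}\circ Ad(k)$ is routine: the diffeomorphism $m\mapsto km$ identifies $T^*M^s$ with $T^*M^{ksk^{-1}}$ equivariantly and transports $\Ch_c$, $\hat{A}^2$ and $D_s$ covariantly. The compatibility $\theta_s\|_S = \theta_{se^S}$, for $S\in U_s(0)$, is the crux. Its proof consists of applying Berline-Vergne/Bismut localization on $T^*M^s$ to the vector field generated by $S$, whose zero locus equals $T^*M^{se^S}$ for $S$ small because $(M^s)^S = M^{se^S}$. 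Three inputs combine to collapse the restricted integral onto that of $\theta_{se^S}$: the Chern character transfer formula $\Ch_c(\sigma,\omega,s)(S+Y)|_{M^{se^S}} = \Ch_c(\sigma,\omega,se^S)(Y)$ from assertion~1 of Theorem \ref{thm:chern:paradan}; the splitting $N^s|_{M^{se^S}} = N^{se^S}_s \oplus i^*N^s$ together with the factorisation
$$D_s(N^s,S+Y)|_{M^{se^S}} = D_{se^S}(N^{se^S},Y)\cdot \Ch_s\bigl(\lambda_{-1}(N^{se^S}_s\otimes\C),Y\bigr)$$
which is the analogue of Lemma \ref{lem: Chlambda2} in the present setting; and the multiplicativity $\hat{A}^2(TM^s,S+Y)|_{M^{se^S}} = \hat{A}^2(TM^{se^S},Y)\wedge \hat{A}^2(N^{se^S}_s,Y)$. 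The factor $\hat{A}^2(N^{se^S}_s,Y)$ cancels against the $\Eul(N^{se^S}_s\otimes\C,Y)$ produced by localization via the identity of assertion \ref{lem :Chlambda2: 2}, and the prefactor $(2i\pi)^{-\dim M^s}$ reconciles with $(2i\pi)^{-\dim M^{se^S}}$ after accounting for the codimension of $M^{se^S}$ in $M^s$, giving exactly the integrand of $\theta_{se^S}(Y)$.

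With invariance and compatibility established, the Duflo-Vergne descent theorem produces a unique $Ad(G)$-invariant generalized function $\mathrm{Ind}^{G,M}_{coh}([\sigma])$ on $G$ whose restriction at $s$ is $\theta_s$ for every $s$; uniqueness in the present statement is precisely uniqueness in the descent theorem. To conclude that $\mathrm{Ind}^{G,M}_{coh}$ descends to $\K(T^*_GM)$, I would invoke the last bullet of Theorem \ref{thm:chern:paradan}: any homotopy of $G$-transversally elliptic symbols produces a constant class for $\Ch_c(\sigma_\tau,\omega,s)$ in $\mathcal{H}^{-\infty}_c(\g(s),T^*M^s)$, so each $\theta_s$ is homotopy invariant and hence so is the descended function. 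The principal obstacle throughout is the compatibility step: performing Berline-Vergne localization with generalized rather than smooth coefficients requires justifying that restriction at $S$, fibre integration, and the transgression limit $\int_0^\infty\eta_s(\sigma,\omega,\mathbb{A},t)\,dt$ defining $\Ch_c$ all commute, and this relies on the support control for Paradan's transgression form $\beta_s(\sigma,\omega,\mathbb{A})$ and the fact that the form $c_s(\sigma,\omega,\mathbb{A},\chi)$ can be chosen with support arbitrarily close to the compact set $C_{s,\omega,\sigma}=T^*_GM\cap\mathrm{supp}(\sigma)\cap T^*M^s$.
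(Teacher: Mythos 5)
This theorem is not proved in the paper at all: it is quoted verbatim from Paradan--Vergne (their Theorem~3.18) and used as an external input for Theorem~\ref{thm:BV:familles}, so there is no internal proof to measure your proposal against. What you have written is a reconstruction of the strategy of the cited reference, and as an outline it is faithful to that strategy: one defines the candidate local pieces $\theta_s$, checks that they are honest invariant generalized functions near $0\in\g(s)$, verifies the invariance and compatibility hypotheses of the Duflo--Vergne descent theorem recalled in Section~2.2, and obtains homotopy invariance (hence dependence only on the class in $\K(T^*_GM)$) from the last bullet of Theorem~\ref{thm:chern:paradan}. You also correctly identify the compatibility condition $\theta_s\|_S=\theta_{se^S}$ as the crux.

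Two caveats. First, a bookkeeping slip: with $N^{se^S}=N^{se^S}_s\oplus N^s|_{M^{se^S}}$, the factorisation you need is $D_{se^S}(N^{se^S},Y)=D_s(N^s,S+Y)|_{M^{se^S}}\cdot\Ch\bigl(\lambda_{-1}(N^{se^S}_s\otimes\C),Y\bigr)$, i.e.\ the $\lambda_{-1}$-factor of the \emph{relative} normal bundle multiplies the restricted $D_s$ rather than dividing it; as written your displayed identity has it on the wrong side, although your subsequent description of the cancellation against $\Eul(N^{se^S}_s\otimes\C,Y)$ shows the intended mechanism is right. Second, and more seriously, the localization step you invoke is not covered by the localization formula actually recalled in this paper (Theorem~\ref{thm : Bismut}), which applies to a single smooth $(d+\iota(X))$-closed form; here the integrand has generalized coefficients in the Lie algebra variable and ``evaluation at $S+Y$'' only makes sense as a distributional restriction $\|_S$. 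Justifying that restriction, fibrewise integration and the transgression limit defining $\Ch_c$ commute, and that localization survives in $\mathcal{H}^{-\infty}$, is precisely the technical core of the Paradan--Vergne proof; you name this obstacle but do not resolve it, so the proposal remains a plan rather than a proof of the quoted theorem.
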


Furthermore, we have the following theorem which makes the link with the analytical index $\mathrm{Ind}^{G,M}_a$ of Atiyah \cite{atiyah1974elliptic} which associate to a $G$-transversally elliptic operator $P$ the following distributional character:
$$\mathrm{Ind}^{G,M}_a(P)=\chi_{\ker P} - \chi_{\ker P^*}.$$
\begin{thm}[\cite{paradan2008index}, Theorem 4.1]\label{thm:ind_c=ind_a}
The previous formulas define a map $$\mathrm{Ind}^{G,M}_{coh} : \K(T_G^*M) \rightarrow C^{-\infty}(G)^{Ad(G)}$$
and we have:
$$\mathrm{Ind}^{G,M}_a=\mathrm{Ind}^{G,M}_{coh}.$$

\end{thm}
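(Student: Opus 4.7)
The plan is to compare two $Ad(G)$-invariant generalized functions on $G$, namely $\mathrm{Ind}^{G,M}_a([\sigma])$ and $\mathrm{Ind}^{G,M}_{coh}([\sigma])$, by appealing to the Duflo-Vergne descent theorem recalled in Section \ref{section:equi:coh:coefgen}: it suffices to check that their restrictions $\|_s$ agree as $Ad(G(s))$-invariant generalized functions on a neighborhood $U_s(0) \subset \g(s)$, for every $s \in G$. The cohomological index is defined precisely so that its $\|_s$-restriction is the displayed integral over $T^*M^s$; the real content is therefore to identify the analytic restriction $\mathrm{Ind}^{G,M}_a([\sigma])\|_s$ with that integral for each $s$.

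For $s = e$ one essentially recovers, after unpacking the generalized coefficients, the classical Berline-Vergne formula for $G$-invariant elliptic operators, with the transversally elliptic case handled by replacing the compactly supported elliptic Chern character by the Chern character with generalized coefficients supported on $T^*_GM$ coming from Theorem \ref{thm:chern:paradan}. For $s \neq e$, I would use excision in Kasparov theory to replace $M$ by a $G$-invariant tubular neighborhood $U \simeq G \times_{G(s)} V$ of the $G$-orbit of $M^s$, where $V$ is a $G(s)$-slice containing $M^s$ as its fixed-point set, and then reduce by an induction argument to a $G(s)$-equivariant statement on $V$. The splitting $T^*V|_{M^s} = T^*M^s \oplus (N^s)^*$ lets me factorize the restricted symbol, up to homotopy, as the exterior product of an elliptic symbol $\tau$ on $M^s$ with a normal Bott-type symbol along $N^s$; the $s$-equivariant Chern character of the Bott factor yields $D_s(N^s,Y)^{-1}$ (combined with $\hat{A}$-genus corrections accounted for as in Lemma \ref{lem : Chlambda2}), while the tangential factor is evaluated by the elliptic $G(s)$-Berline-Vergne formula applied to $\tau$, producing the claimed integrand.

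The hard step will be identifying the normal Bott contribution at the level of $Ad(G(s))$-invariant distributions on $\g(s)$, not merely as smooth equivariant cohomology classes: one must verify that the oscillatory transgression integral $\int_0^\infty \eta_s(\sigma_{\mathrm{Bott}},\omega,\mathbb{A},t)\,dt$ of Section \ref{section:Chern:Character} really defines the expected generalized function whose inverse is $D_s(N^s,Y)$, and that the resulting local distribution coincides on the nose with Atiyah's distributional restriction of $\mathrm{Ind}^{G,M}_a([\sigma])$ near $s$. This is the technical heart of \cite{paradan2008index} and is precisely what forces the use of Chern characters with generalized, rather than smooth, coefficients on the Lie algebra. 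Once this local identification is in hand at every $s$, the compatibility $(\Theta\|_s)\|_S = \Theta\|_{se^S}$ and the descent theorem close the argument.
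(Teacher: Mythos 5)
The paper itself offers no proof of this statement: it is imported verbatim from \cite{paradan2008index} (Theorem 4.1) and used as a black box in the proof of Theorem \ref{thm:BV:familles}. Your proposal therefore has to be measured against the argument in \cite{paradan2008index}, and against that standard it has a genuine gap at its centre --- one you flag yourself. The identification of $\mathrm{Ind}^{G,M}_a([\sigma])\|_s$ with the localized integral over $T^*M^s$ is not an auxiliary technicality to be outsourced; it \emph{is} the theorem. The descent theorem of \cite{DV:comoEquiDescente} only says that an $Ad(G)$-invariant generalized function is determined by its germs $\|_s$; it gives no handle on what the germs of Atiyah's distributional index actually are. To compute them as you propose, you would need a delocalized index formula for $G$-transversally elliptic operators near each $s$, i.e.\ essentially the Berline--Vergne theorem \cite{BV:IndEquiTransversal} in the transversally elliptic setting, so the argument is circular unless that input is established independently. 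Your Bott-type factorization along $N^s$ and the identification of the transgression integral with $D_s(N^s,Y)^{-1}$ are the right local ingredients, but deferring ``the technical heart'' to the reference means the proposal does not close.

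It is also worth recording that the route actually taken in \cite{paradan2008index} is different from the one you sketch. Rather than computing $\mathrm{Ind}^{G,M}_a\|_s$ directly at each $s$, Paradan and Vergne verify that $\mathrm{Ind}^{G,M}_{coh}$ satisfies the same functorial properties as the analytic index --- excision, compatibility with exterior products and with the $R(G)$-module structure, the free-action property, and compatibility with induction from subgroups --- together with its value on the explicit generators for torus actions; by Atiyah's axiomatic characterization of the analytic index in \cite{atiyah1974elliptic}, these properties determine the index uniquely, whence the equality. The descent machinery you invoke enters only in establishing that the local formulas glue into a well-defined element of $C^{-\infty}(G)^{Ad(G)}$, which is the content of Theorem \ref{thm:Ind:BV}, not of the comparison. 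To repair your approach you would either have to supply an independent proof of the delocalized analytic formula at each $s$ (the Berline--Vergne route), or switch to the axiomatic uniqueness argument; as written, neither is carried out.
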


\begin{lem}  Denote by $j :T^*M \rightarrow T^VM^*$ the projection, $r :T^VM^* \hookrightarrow T^*M$ the inclusion induced by the metric and $p : M\rightarrow B$ the projection.
Let $\sigma \in \K(T^V_GM^*)$ and $\sigma' \in \k(T^*B)$. We have the following equality in $\mathcal{H}^{-\infty}_G(\g ,T^*M^s)$:
$$\Ch_{c}(\sigma \otimes p^*\sigma',\omega,s)(Y)=j^*\Ch_{c}(\sigma,r^*\omega,s)(Y)\wedge p^*\Ch_{c}(\sigma').$$
\end{lem}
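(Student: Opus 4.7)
The plan is to exploit the multiplicativity of the equivariant Chern character of a graded tensor product of super-connections, combined with a convenient splitting of the Liouville $1$-form $\omega$ on $T^*M$. Using the $G$-invariant metric on $M$, we have the orthogonal decomposition $T^*M = T^VM^* \oplus p^*T^*B$, and accordingly $\omega$ splits as
\[
\omega \;=\; k^*\omega + q^*\omega_B,
\]
where $k = r\circ j$, $q : T^*M \to T^*B$ is the projection induced by $p$ and the metric, and $\omega_B$ is the Liouville $1$-form on $T^*B$. By functoriality, $k^*\omega = j^*(r^*\omega)$.

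Choose $G$-invariant super-connections $\mathbb{A}_1$ on the bundle supporting $\sigma$ and $\mathbb{A}_2$ on the bundle supporting $\sigma'$ (recall $G$ acts trivially on $B$), and form the graded tensor product $\mathbb{A} = \mathbb{A}_1\,\#\,p^*\mathbb{A}_2$. Using $v_{\sigma\otimes p^*\sigma'} = v_\sigma\,\hat{\otimes}\,1 + 1\,\hat{\otimes}\,v_{p^*\sigma'}$ together with the additive splitting of $\omega$, a direct algebraic check yields
\[
\mathbb{A}^{\sigma\otimes p^*\sigma',\,\omega}(t) \;=\; \mathbb{A}_1^{\sigma,\,k^*\omega}(t)\,\#\,(p^*\mathbb{A}_2)^{p^*\sigma',\,q^*\omega_B}(t).
\]
Applying $\Str(s^E e^{-F_\g(Y)})$ and using the triviality of the $G$-action on $B$ (so the equivariant curvature of the second factor collapses to the $q$-pullback of the ordinary curvature), multiplicativity of the super-trace gives
\[
\Ch_s\bigl(\mathbb{A}^{\sigma\otimes p^*\sigma',\omega}(t), Y\bigr) \;=\; j^*\Ch_s\bigl(\mathbb{A}_1^{\sigma,\,r^*\omega}(t), Y\bigr)\,\wedge\, p^*\Ch\bigl(\mathbb{A}_2^{\sigma',\,\omega_B}(t)\bigr),
\]
after invoking Corollary \ref{cor:egalite:chern} to replace $k^*\omega$ by $j^*(r^*\omega)$. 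The same factorization propagates to the transgression forms $\eta_s(\cdot,t)$ and, upon $t$-integration in the sense of distributions, to the forms $\beta_s$.

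Choosing a cutoff $\chi = \chi_1\cdot\chi_2$, with $\chi_1$ localized near $C_{s,r^*\omega,\sigma}$ in $(T^VM^*)^s$ and $\chi_2$ compactly supported near $C_{\omega_B,\sigma'}$ in $T^*B$, the representative $c_s(\sigma\otimes p^*\sigma',\omega,\mathbb{A},\chi) = \chi\,\Ch_s(\mathbb{A}) + d\chi\wedge\beta_s$ then splits as the wedge product of the corresponding representatives for $\sigma$ and $\sigma'$, yielding the claimed identity in $\mathcal{H}^{-\infty}_G(\g, T^*M^s)$. The main obstacle is the distributional nature of $\beta_s$, for which a product of two distributions in $t$ is not a priori defined; the key point is that the factor coming from the elliptic symbol $\sigma'$ is a \emph{smooth} compactly supported equivariant form (being untwisted by the $G$-action), so its pointwise multiplication by the distributional factor arising from the $G$-transversally elliptic $\sigma$ makes unambiguous sense, and the resulting cohomology class is independent of the choices of $\mathbb{A}_i$ and $\chi_i$ by Theorem \ref{thm:chern:paradan}.
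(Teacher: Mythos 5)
Your proof is correct and takes essentially the same route as the paper: write the deformed super-connection of $\sigma\otimes p^*\sigma'$ as a graded tensor product using $v_{\sigma\otimes p^*\sigma'}=v_\sigma\otimes 1+1\otimes p^*v_{\sigma'}$ and conclude by multiplicativity of the Paradan--Vergne Chern character (only note that the transgression $\eta_s$ of a product super-connection obeys a Leibniz-type rule rather than literally factoring, which does not affect the conclusion). The sole difference is that the paper discards the horizontal part of $\omega$ outright via Corollary \ref{cor:egalite:chern}, replacing $\omega$ by $k^*\omega=j^*r^*\omega$ so that the elliptic factor carries no $1$-form deformation at all, whereas you retain it as $q^*\omega_B$ on the base factor --- harmless, since $G$ acts trivially on $B$ and $\sigma'$ is elliptic with compact support.
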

Denote by $E$ the super-bundle corresponding to $\sigma $ and $E'$ the super-bundle corresponding to $\sigma'$. In this lemma, $\mathbb{A}^{\omega_s}$ means the restriction of a super-connection $\mathbb{A}^{\omega}$ on $E\otimes p^*E'$, $\mathbb{A}^{r^*\omega_s}$ means the restriction of $\mathbb{A}^{r^*\omega}(\sigma)= \mathbb{A}+i(v_\sigma + r^*\omega)$, where $\mathbb{A}$ is a super-connection on $E$ and $\mathbb{A}(\sigma')=\mathbb{A'}+iv_{\sigma'}$ where $\mathbb{A}'$ is a super-connection on $E'$.

\begin{proof}
By Corollary \ref{cor:egalite:chern}, we have the equality:
$$\Ch_{c}(\sigma \otimes p^*\sigma' ,\omega,s)(Y)=\Ch_{c}(\sigma \otimes p^*\sigma' ,j^*r^*\omega ,s)(Y).$$
Furthermore, if we consider the product super-connection
$$\mathbb{B}=j^*\mathbb{A}\otimes 1 + 1\otimes p^*\mathbb{A}',$$
then we get:
$$\mathbb{A}^{j^*r^*\omega}(\sigma \otimes p^*\sigma' )=j^*(\mathbb{A}^{r^*\omega}(\sigma ) \otimes 1)+1\otimes p^*(\mathbb{A}'(\sigma')),$$
because $v_{\sigma\otimes p^*\sigma'}=v_{\sigma}\otimes 1 +1\otimes p^*v_{\sigma'}$. So the equality.
\end{proof}

Let now $\sigma$ be a $G$-transversally elliptic symbol along the fibers of $p : M\rightarrow B $. We defined in Section \ref{chapitre 2} the Chern character of the index class $\Ch^{\mathrm{HL}}(\ind([\sigma ]))$. Moreover, we identified it with $\Ch(\ind([\sigma ]))\in C^{-\infty }(G,H^{2\bullet}(B,\mathbb{C}))^{Ad(G)}$ in the Theorem \ref{thm:convergence:chern:indice}. We can restrict such element through its associated generalized function because such element is an element of $C^{-\infty}(G)^{Ad(G)}\otimes H(B,\mathbb{C})$.

We can now state our main theorem:

\begin{thm}\label{thm:BV:familles}
Let $\sigma$ be a $G$-transversally elliptic symbol along the fibers of a compact $G$-equivariant fibration $p : M\rightarrow B $ with $B$ oriented and $G$-trivial. Denote by $N^s$ the normal vector bundle to $M^s$ in $M$. \\
1. There is a unique generalized function with values in the cohomology of $B$ denoted $\mathrm{Ind}^{G,M|B}_{coh} : \K(T^V_GM) \rightarrow C^{-\infty}(G , H(B,\mathbb{C}))^{Ad(G)}$ satisfying the following local relations:
$$\mathrm{Ind}^{G,M|B}_{coh}([\sigma ] )\|_s(Y)=(2i\pi)^{-\dim (M^s|B)}     \bigint_{T^VM^s|B}    \hspace*{-0.5cm} \dfrac{\Ch_{c}(\sigma ,r^*\omega, s)(Y)\wedge \hat{A}^2(T^VM^s,Y)}{D_s(N^s,Y)},$$
$\forall s\in G$, $\forall Y \in \g(s)$ small enough such that the equivariant classes $\hat{A}^2(T^VM^s,Y)$ and $D(N^s,Y)$ are defined. \\
2. Furthermore, we have the following index formula:
$$\mathrm{Ind}^{G,M|B}_{coh}([\sigma ] )=\Ch(\ind([\sigma ]))\in C^{-\infty}(G ,H(B,\mathbb{C}))^{Ad(G)}.$$
\end{thm}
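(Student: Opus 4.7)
The plan is to prove parts 1 and 2 simultaneously by directly showing that $\Ch(\ind([\sigma]))$ satisfies the prescribed local relations; combined with uniqueness from the descent theorem of Duflo-Vergne recalled in Section~\ref{section:equi:coh:coefgen}, this yields both the existence of $\mathrm{Ind}^{G,M|B}_{coh}$ and its identification with $\Ch(\ind([\sigma]))$. The strategy for proving the local relations is to reduce to the single operator case of Paradan-Vergne (Theorems~\ref{thm:Ind:BV} and~\ref{thm:ind_c=ind_a}) by pairing with $\k$-homology classes of the base $B$.

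Since both sides of the proposed local identity take values in the finite-dimensional cohomology $H(B,\mathbb{C})$, the same argument as in the proof of Theorem~\ref{thm:convergence:chern:indice} reduces the identity to checking it after pairing with $\Ch(\alpha)$ for every $\alpha\in\k_0(B)$, the odd part being trivial since $\k\k^1(C^*G,\mathbb{C})=0$. For such an $\alpha$, represented by an elliptic symbol $\sigma'$ on $B$, multiplicativity of the bivariant local cyclic Chern character gives
\[
\bigl\langle\Ch(\ind([\sigma])),\Ch(\alpha)\bigr\rangle \;=\; \Ch\bigl(\ind([\sigma])\otimes_{C(B)}\alpha\bigr),
\]
and Theorem~\ref{thm:couplage:ind} identifies the right-hand side with the Chern character of the index of a single $G$-transversally elliptic operator on $M$ whose symbol class in $\K(T^*_GM)$ is represented by $\sigma\otimes p^*\sigma'$. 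Applying Paradan-Vergne to this operator expresses the result as a local integral over $T^*M^s$ of $\Ch_c(\sigma\otimes p^*\sigma',\omega,s)(Y)\wedge\hat{A}^2(TM^s,Y)/D_s(N^s,Y)$, where $N^s$ is the normal bundle of $M^s$ in $M$, which is purely vertical because $G$ acts trivially on $B$.

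The heart of the argument, which I expect to be the main obstacle, is to rewrite this Paradan-Vergne integral over $T^*M^s$ as the pairing of the proposed family integrand over $T^VM^s|B$ with $\Ch(\alpha)$. Three ingredients drive the computation. First, the product formula of the lemma preceding the theorem,
\[
\Ch_c(\sigma\otimes p^*\sigma',\omega,s)(Y) \;=\; j^*\Ch_c(\sigma,r^*\omega,s)(Y)\wedge p^*\Ch_c(\sigma'),
\]
splits the Chern character along the fibration. Second, the metric decomposition $TM^s\cong T^VM^s\oplus p^*TB$ yields the multiplicativity $\hat{A}^2(TM^s,Y)=\hat{A}^2(T^VM^s,Y)\wedge p^*\hat{A}^2(TB)$, while $D_s(N^s,Y)$ is already pulled back from the vertical part. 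Third, since $T^*_GM=T^V_GM\oplus p^*T^*B$ under the same splitting, a Fubini-type argument for the integration over $T^*M^s$ factorizes the Paradan-Vergne integrand: the fiber integral over the vertical direction reproduces the candidate family integrand, while the base integration over $T^*B$ produces $(2i\pi)^{-\dim B}\int_{T^*B}\Ch_c(\sigma')\wedge\hat{A}^2(TB)$, which realises the pairing with $\Ch(\alpha)$ via Atiyah-Singer on $B$; this also accounts for the dimensional shift between the normalisations $(2i\pi)^{-\dim M^s}$ on the Paradan-Vergne side and $(2i\pi)^{-\dim(M^s|B)}$ on the family side.

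Once the local identity is established for every $s\in G$, setting $\mathrm{Ind}^{G,M|B}_{coh}([\sigma]):=\Ch(\ind([\sigma]))$ gives an element of $C^{-\infty}(G,H(B,\mathbb{C}))^{Ad(G)}$ satisfying the prescribed local relations, proving existence. Uniqueness is then immediate from the Duflo-Vergne descent theorem applied component-wise in the finite-dimensional target $H(B,\mathbb{C})$: any two $Ad(G)$-invariant generalized functions on $G$ whose local restrictions $\theta_s$ agree for all $s\in G$ must be equal, since these local data determine the global function (and the invariance and compatibility conditions of the $\theta_s$ are automatic once they are known to come from the global $\Ch(\ind([\sigma]))$, but can also be verified directly from the naturality of $\Ch_c$, $\hat{A}^2$, $D_s$ and the functorial identity $c_{se^S}(\sigma,\lambda,\mathbb{A},\chi)(Y)=c_s(\sigma,\lambda,\mathbb{A},\chi)(S+Y)_{|W^s\cap W^S}$ of Theorem~\ref{thm:chern:paradan} together with the analogous compatibilities in the spirit of Lemma~\ref{lem: Chlambda2}).
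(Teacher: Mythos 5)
Your proposal is correct and follows essentially the same route as the paper's proof: pairing with the Chern characters of $\k_0(B)$-classes, invoking Theorem~\ref{thm:couplage:ind} (and the Kasparov product identity $\mathrm{Ind}^{G,M}_a([\sigma\odot p^*\sigma'])=\ind([\sigma])\otimes_{C(B)}[P_{\sigma'}]$) to reduce to a single $G$-transversally elliptic operator with symbol $\sigma\otimes p^*\sigma'$, applying the Paradan--Vergne formula, and then factorizing the integral over $T^*M^s$ via the product formula for $\Ch_c$, the splitting $TM^s\cong T^VM^s\oplus p^*TB$, and Poincaré duality on $B$. The paper handles uniqueness slightly more implicitly (through the single-operator descent already contained in Paradan--Vergne after pairing) rather than re-invoking the Duflo--Vergne descent theorem componentwise, but this is an inessential difference.
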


\begin{proof} Recall that $G(s)$ means the centralizer of $s$ in $G$ and that $\g(s)$ is its Lie algebra.
Denote by $U_s(0)\subset \g(s)$ a $G(s)$-invariant neighborhood of $0$, such that the $\hat{A}$-genus $\hat{A}(T^VM^s,Y)$ as well as $\dfrac{1}{D_s(N^s,Y)}$ are defined on $U_s(0)$. Denote by $n_s$ the dimension of $M^s$ and $n$ the dimension of $B$. By Theorem \ref{thm:convergence:chern:indice}, we have $\Ch(\ind([\sigma ] ))\|_s(Y) \in \mathcal{H}^{-\infty}_{G(s)}(U_s(0),B)\cong C^{-\infty}(U_s(0))^{Ad(G(s))} \otimes H(B,\mathbb{C})$. To compute $\Ch(\ind([\sigma ] ))\|_s(Y)$ it is sufficient to pair it with the de Rham homology of $B$. Moreover, the de Rham homology of $B$ is generated by the range of the Chern character of the $\k$-homology of $B$. By Corollary \ref{cor:couplage:ind:Atiyah}, we know that the pairing of the index class of a family of $G$-invariant operators which are $G$-transversally elliptic with an element of the $\k$-homology of $B$ is represented by a $G$-invariant, $G$-transversally elliptic operator on $M$. Let $\sigma'$ be a symbol on $B$ then we can apply Theorem \ref{thm:Ind:BV} to the $G$-transversally elliptic symbol  $\sigma\otimes p^*\sigma'$, 
to deduce (\cite{paradan2008index}):
$$ \mathrm{Ind}^{G,M}_{coh}([\sigma\odot p^*\sigma' ])\|_s(Y)
=(2i\pi)^{-n_s}\bigint_{T^*M^s}\hspace*{-0.2cm}\dfrac{\Ch_{c}(\sigma \odot p^*\sigma', \omega,s)(Y)\wedge \hat{A}^2(TM^s,Y)}{D_s(N^s,Y)}.$$
By Theorem \ref{thm:bismut:fibréNormal} we know that $TM^s\cong T^VM^s\oplus p^*TB$ and that $N$ is vertical because the action of $G$ on $B$ is trivial. So we have: 
$$\hat{A}(TM^s,Y)=\hat{A}(T^VM^s,Y)\wedge p^*\hat{A}(TB).$$
Moreover, we have $\Ch_{c}(\sigma \odot p^*\sigma' ,\omega,s)(Y)=j^*\Ch_{c}(\sigma, r^*\omega,s)(Y)\wedge p^*\Ch_c(\sigma' )$, where $j : T^*M \rightarrow T^VM^*$. 
So we get\\
$\begin{array}{lll}
\mathrm{Ind}^{G,M}_{coh}([\sigma\odot p^*\sigma' ])(se^Y)\\
\displaystyle=(2i\pi)^{-n_s}\bigint_{T^*M^s}\dfrac{j^*\Ch_{c}(\sigma, r^*\omega,s) (Y) \hat{A}^2(T^VM^s,Y)}{D_s(N,Y)} p^*\big(\Ch_c(\sigma') \hat{A}^2(TB)\big).
\end{array}$\\
As $B$ is oriented, we get:\\
$\begin{array}{lll}
\mathrm{Ind}^{G,M}_{coh}([\sigma\odot p^*\sigma' ])(se^Y)\\
=\displaystyle\bigint_{\hspace*{-0.1cm} b\in B}\nu\bigint_{\hspace*{-0.1cm} (TM_b)^s}\hspace*{-0.7cm}\dfrac{\Ch_{c}(\sigma , r^*\omega,s) (Y) \hat{A}^2(T^VM^s,Y)}{D_s(N^s,Y)} \hspace*{-0.1cm} \bigint_{\hspace*{-0.1cm} T_bB} \hspace*{-0.3cm}(2i\pi)^{-n} \Ch_c(\sigma' ) \hat{A}^2(TB),
\end{array}$\\
where $\nu=(2i\pi)^{-(n_s-n)}$. But, $\bigint_{\hspace*{-0.1cm} TB|B} \hspace*{-0.3cm}(2i\pi)^{-n} \Ch_c(\sigma' ) \hat{A}^2(TB)=PD(\mathrm{Ch}([P_{\sigma'}]))$ where $[P_{\sigma'}]\in \k\k(C(B) ,\mathbb{C})$ and where $\mathrm{Ch}([P_{\sigma'}])$ is the Chern character of $[P_{\sigma'}]$ in de Rham homology of $B$ and $PD : H_{\dim B -\bullet}(B,\mathbb{C})\rightarrow H^\bullet (B,\mathbb{C})$ is the Poincaré duality isomorphism. So we get that
$$\mathrm{Ind}^{G,M|B}_{coh}([\sigma ] )\|_s(Y)=(2i\pi)^{-\dim (M^s|B)}\hspace*{-0.1cm}\bigint_{T^VM^s|B}\hspace*{-0.5cm}\dfrac{\Ch_{c}(\sigma,r^*\omega,s)(Y)\wedge \hat{A}^2(T^VM^s,Y)}{D_s(N^s,Y)},$$
defines an element of $C^{-\infty}(G,H(B,\mathbb{C}))^{Ad(G)}=C^{-\infty}(G)^{Ad(G)}\otimes H(B,\mathbb{C})$, since $\forall \sigma' \in \k(TB)$ the generalized functions $(\mathrm{Ind}^{G,M}_{coh}(\sigma \otimes \sigma')\|_s )_s$ defines an element of $C^{-\infty}(G)^{Ad(G)}$ by Theorem 3.18 of \cite{paradan2008index}.\\
By Theorem 
4.5 of \cite{baldare:KK}, we know that $$\mathrm{Ind}^{G,M}_a([\sigma \odot p^*\sigma' ])=\ind ([\sigma ])\otimes_{C(B)}[P_{\sigma'}].$$
We denote by $\Ch^{\mathrm{HL}}$ the Chern character in bivariant local cyclic homology \cite{puschnigg2003diffeotopy}. 
The following diagram is commutative:
$$ \xymatrix{C^{-\infty}(G,H(B,\mathbb{C}))^{Ad(G)}\otimes H_*(B,\mathbb{C})\ar[r]^{\hspace*{1.8cm}\langle \cdot ,\cdot \rangle } \ar@{^{(}->}[d]&C^{-\infty}(G)^{Ad(G)}\ar@{^{(}->}[d]\\
\mathrm{Hom}(R(G)\otimes \mathbb{C} ,H^*(B,\mathbb{C}))\otimes H_*(B,\mathbb{C})\ar[d]_{\cong} \ar[r]^{\hspace*{1.8cm}\langle \cdot , \cdot \rangle }& \mathrm{Hom}(R(G)\otimes \mathbb{C},\mathbb{C})\ar[d]^{\cong}\\
\mathrm{HL}(C^*G,C(B))\otimes \mathrm{HL}(C(B),\mathbb{C})\ar[r]^{\hspace*{1.8cm}\circ} &\mathrm{HL}(C^*G,\mathbb{C}).
}$$
From this we deduce that the product $\Ch^{\mathrm{HL}}(\ind ([\sigma ]))\circ \Ch^{\mathrm{HL}}([P_{\sigma'}])$ becomes via the isomorphism $\mathrm{HL}(C^*G ,\mathbb{C})\simeq \mathrm{Hom}(R(G)\otimes \mathbb{C},\mathbb{C})$:
$$\Ch^{\mathrm{HL}}(\ind  ([\sigma ]))\circ \Ch^{\mathrm{HL}}([P_{\sigma'}])\cong\mathrm{Ind}^{G,M}_{a}([\sigma ]\otimes p^*[\sigma']).$$
Now, by Theorem \ref{thm:ind_c=ind_a}, see \cite{paradan2008index}, we know that $\mathrm{Ind}^{G,M}_a=\mathrm{Ind}^{G,M}_{coh}$. So
$$\Ch^{\mathrm{HL}}(\ind  ([\sigma ]))\circ \Ch^{\mathrm{HL}}([P_{\sigma'}])\cong\mathrm{Ind}^{G,M}_{coh}([\sigma ]\otimes p^*[\sigma']).$$
As $\mathrm{Ind}^{G,M}_{coh}([\sigma ]\otimes p^*[\sigma'])=\langle \mathrm{Ind}^{G,M|B}_{coh}([\sigma ]), PD(\Ch(P_{[\sigma']})\rangle $, we get by Poincaré duality that 
$$\Ch^{\mathrm{HL}}(\ind  ([\sigma ]))\simeq \mathrm{Ind}^{G,M|B}_{coh}([\sigma ]).$$
By the identification of Theorem \ref{thm:convergence:chern:indice}, we eventually get: $$\Ch( \ind  ([\sigma ]))= \mathrm{Ind}^{G,M|B}_{coh}([\sigma ]).$$
\end{proof}

\footnotesize
\bibliographystyle{plain}
\bibliography{Transversalement_elliptique}

\begin{thebibliography}{10}

\bibitem{atiyah1974elliptic}
M.F. Atiyah.
\newblock {\em Elliptic operators and compact groups}.
\newblock Lecture notes in mathematics. Springer Verlag, 1974.

\bibitem{Atiyah-Segal:II}
M.F. Atiyah and G.B Segal.
\newblock The index of elliptic operators {II}.
\newblock {\em Ann. Math.}, 87:531--545, 1968.

\bibitem{Atiyah-Singer:III}
M.F. Atiyah and I.M. Singer.
\newblock The index of elliptic operators {I}.
\newblock {\em Ann. Math.}, 87:546--604, 1968.

\bibitem{Atiyah-Singer:I}
M.F. Atiyah and I.M. Singer.
\newblock The index of elliptic operators {I}.
\newblock {\em Ann. Math.}, 87:484--530, 1968.

\bibitem{Atiyah-Singer:IV}
M.F. Atiyah and I.M. Singer.
\newblock The index of elliptic operators {IV}.
\newblock {\em Ann. Math.}, 93:119--138, 1971.

\bibitem{baldare:KK}
A.~Baldare.
\newblock The index of {G}-transversally elliptic families {I}.
\newblock 2019.

\bibitem{Benameur:LongLefschetzKtheorie}
M.-T. Benameur.
\newblock A longitudinal {L}efschetz theorem in {K}-theory.
\newblock {\em K-theory}, 12:227--257, 1997.

\bibitem{Benameur:thmFamilleLefschetz}
M.-T. Benameur.
\newblock Cyclic cohomology and the family {L}efschetz theorem.
\newblock {\em Math. Ann.}, 323:97–121, 2002.

\bibitem{Benameur_Heitsch_lefschetz_foliation}
M.T. Benameur and J.L. Heitsch.
\newblock The higher fixed point theorem for foliations i. holonomy invariant
  currents.
\newblock {\em Journal of Functional Analysis}, 259(1):131 -- 173, 2010.

\bibitem{BGV}
N.~Berline, E.~Getzler, and M.~Vergne.
\newblock {\em Heat kernels and {D}irac operators}, volume 298.
\newblock Springer, grundlehen der math. wissenschaft edition, 1991.

\bibitem{BV:formuleloc:Kirillov}
N.~Berline and M.~Vergne.
\newblock The equivariant index and {K}irillov's character formula.
\newblock {\em American Journal of Mathematics}, 107(5):1159--1190, 1985.

\bibitem{BV:equiChernCaracter}
N.~Berline and M.~Vergne.
\newblock {\em The equivariant {C}hern character and index of {G}-invariant
  operators. {I}n {D}-{M}odules, representation theory and quantum groups},
  volume 1565.
\newblock Springer Lectures Notes in Math., 1992.

\bibitem{BV:ChernCharacterTransversally}
N.~Berline and M.~Vergne.
\newblock The {C}hern character of a transversally elliptic symbol and the
  equivariant index.
\newblock {\em Inventiones mathematicae}, 124(1):11--49, 1996.

\bibitem{BV:IndEquiTransversal}
N.~Berline and M.~Vergne.
\newblock L'indice {\'e}quivariant des op{\'e}rateurs transversalement
  elliptiques.
\newblock {\em Inventiones mathematicae}, 124(1):51--101, 1996.

\bibitem{Bismut:localisation}
J.-M. Bismut.
\newblock Localization formulas, superconnections, and the index theorem for
  families.
\newblock {\em Commun. Math. Phys}, 103:127--166, 1986.

\bibitem{Bott:Tu}
R.~Bott and Loring~W. Tu.
\newblock {\em Differential forms in algebraic topology}.
\newblock Graduate texts in mathematics. Springer-Verlag, New York, 1982.
\newblock Includes index.

\bibitem{Connes:Skandalis:longIndThmFoliations}
A.~Connes and G.~Skandalis.
\newblock The longitudinal index theorem for foliations.
\newblock {\em Publications RIMS Kyoto Univ}, 20:135--179, 1984.

\bibitem{duflo1990orbites}
M.~Duflo and M.~Vergne.
\newblock Orbites coadjointes et cohomologie {\'e}quivariante.
\newblock In {\em The orbit method in representation theory}, pages 11--60.
  Springer, 1990.

\bibitem{DV:comoEquiDescente}
M.~Duflo and M.~Vergne.
\newblock Cohomologie \'equivariante et descente.
\newblock {\em Ast\'erisque}, 215:5--108, 1993.

\bibitem{Guillemin:Sternberg:SupersymEquiv}
V.~W. Guillemin and S.~Sternberg.
\newblock {\em Supersymmetry and equivariant de {R}ham theory}.
\newblock Springer, 1999.

\bibitem{heitsch1990lefschetz}
J.L. Heitsch and C.~Lazarov.
\newblock A lefschetz theorem for foliated manifolds.
\newblock {\em Topology}, 29(2):127--162, 1990.

\bibitem{heitsch1991}
J.L. Heitsch and C.~Lazarov.
\newblock Rigidity theorems for foliations by surfaces and spin manifolds.
\newblock {\em Michigan Math. J.}, 38(2):285--297, 1991.

\bibitem{hilsum1987morphismes}
M.~Hilsum and G.~Skandalis.
\newblock Morphismes {K}-orient{\'e}s d'espaces de feuilles et
  fonctorialit{\'e} en th{\'e}orie de {K}asparov (d'apr{\`e}s une conjecture
  d'{A}. {C}onnes).
\newblock 20(3):325--390, 1987.

\bibitem{kobayashi1958}
S.~Kobayashi.
\newblock Fixed points of isometries.
\newblock {\em Nagoya Math. J.}, 13:63--68, 1958.

\bibitem{kumar1993equivariant}
S.~Kumar and M.~Vergne.
\newblock Equivariant cohomology with generalized coefficients.
\newblock {\em Ast{\'e}risque}, 215:109--204, 1993.

\bibitem{Quillen:superco:thom}
V.~Mathai and D.~Quillen.
\newblock Superconnections, {T}hom classes and equivariant differential forms.
\newblock {\em Topology}, 25:85--110, 1986.

\bibitem{meinrenken2006equivariant}
E.~Meinrenken.
\newblock Equivariant cohomology and the {C}artan model.
\newblock {\em Encyclopedia of mathematical physics}, pages 242--250, 2006.

\bibitem{meyer2002comparisons}
R.~Meyer.
\newblock Comparisons between periodic, analytic and local cyclic cohomology.
\newblock {\em arXiv preprint math/0205276}, 2002.

\bibitem{natsume:nest}
T.~Natsume and N.~Ryszard.
\newblock The cyclic cohomology of compact lie groups and the direct sum
  formula.
\newblock {\em Journal of Operator Theory}, 23(1):43--50, 1990.

\bibitem{paradan2008equivariant}
P.-E. Paradan and M.~Vergne.
\newblock Equivariant {C}hern characters with generalized coefficients.
\newblock {\em arXiv preprint arXiv:0801.2822}, 2008.

\bibitem{paradan2008index}
P.-E. Paradan and M~Vergne.
\newblock Index of transversally elliptic operators.
\newblock {\em arXiv preprint arXiv:0804.1225}, 2008.

\bibitem{puschnigg2003diffeotopy}
M.~Puschnigg.
\newblock Diffeotopy functors of ind-algebras and local cyclic cohomology.
\newblock {\em Doc. Math}, 8:143--245, 2003.

\bibitem{Quillen:superco}
D.~Quillen.
\newblock Superconnectionsand the {C}hern character.
\newblock {\em Topology}, 24 no 1:89--95, 1985.

\bibitem{rosenberg1987kunneth}
J.~Rosenberg and C.~Schochet.
\newblock The k{\"u}nneth theorem and the universal coefficient theorem for
  {K}asparov’s generalized ${K}$-functor.
\newblock {\em Duke Mathematical Journal}, 55(2):431--474, 1987.

\end{thebibliography}

\end{document}